\newtheorem{definition}{Definition}
\newtheorem{proposition}[definition]{Proposition}
\newtheorem{theorem}[definition]{Theorem}
\newtheorem{remark}[definition]{Remark}
\newtheorem{lemma}[definition]{Lemma}
\newtheorem{assumption}[definition]{Assumption}
\newcommand{\ts}{\hspace{0.5pt}}
\newcommand{\R}{\mathbb{R}\ts}
\newcommand{\bS}{\mathbb S}
\newcommand{\wto}{\rightharpoonup}
\newcommand{\ep}{\varepsilon}
\newcommand{\eps}{\ep}
\newcommand{\Om}{\Omega}
\newcommand{\supp}{\mathrm{supp}}
\newcommand{\Hmm}[1]{\leavevmode{\marginpar{\tiny%
      $\hbox to 0mm{\hspace*{-0.5mm}$\leftarrow$\hss}%
      \vcenter{\vrule depth 0.1mm height 0.1mm width \the\marginparwidth}%
      \hbox to 0mm{\hss$\rightarrow$\hspace*{-0.5mm}}$\\\relax\raggedright #1}}}
\newcommand{\cL}{\mathcal{L}}
\newcommand{\cLe}{\mathcal{L}_{\ep}}
\newcommand{\bL}{\mathbb{L}}
\newcommand{\bLe}{\mathbb{L}_{\ep}}
\newcommand{\ue}{u_\ep}
\newcommand{\Hto}{\stackrel{H}{\to}}
\renewcommand{\th}{\theta}
\newcommand{\bbL}{\mathbb{L}}
\newcommand{\rd}{\mathrm{d}}
\newcommand{\dm}{\, \rd\mu}
\newcommand{\cE}{\mathcal{E}}
\newcommand{\cEe}{\mathcal{E}_{\ep}}
\newcommand{\di}{\mbox{div}}
\newcommand{\pa}{\partial}
\newcommand{\refmani}{N_0}
\newcommand{\LimMeasure}{\hat{\mu}_0}
\newcommand{\LimMetric}{\hat{g}_0}
\newcommand{\Div}{{\mathop{\operatorname{div}}}}
\begin{document}
\title{$H$-compactness of elliptic operators on weighted Riemannian Manifolds}
\author{Helmer Hoppe}
\address[Helmer Hoppe]{Technische Universit\"at Dresden, Faculty of Mathematics, 01069 Dresden, Germany}
\email{helmer.hoppe@tu-dresden.de}
\thanks{}

\author{Jun Masamune}
\address[Jun Masamune]{Hokkaido University, Department of Mathematics, 
Kita 10, Nishi 8, Kita-Ku, Sapporo, Hokkaido, 060-0810, Japan}
\email{jmasamune@math.sci.hokudai.ac.jp}
\thanks{}

\author{Stefan Neukamm}
\address[Stefan Neukamm]{Technische Universit\"at Dresden, Faculty of Mathematics, 01069 Dresden, Germany}
\email{stefan.neukamm@tu-dresden.de}

\begin{abstract}
In this paper we study the asymptotic behavior of second-order uniformly elliptic 
operators on weighted Riemannian manifolds. They naturally emerge when studying spectral properties of the Laplace-Beltrami operator on families of manifolds with rapidly oscillating metrics. We appeal to the notion of \mbox{$H$-convergence} introduced by Murat and Tartar. 
In our main result we establish an \mbox{$H$-compactness} result that applies to elliptic operators  with measurable, uniformly elliptic coefficients on weighted Riemannian manifolds. 
We further discuss  the special case of ``locally periodic'' coefficients and study the asymptotic spectral behavior of compact submanifolds of $\R^n$ with rapidly oscillating geometry.
\end{abstract}

\date{\today}
\maketitle
\tableofcontents

\section{Introduction}

We study the asymptotic behavior of elliptic operators on families of weighted Riemannian manifolds
that might feature fast oscillations.
In this introduction we survey the results and the structure of this paper without going into detail. 
The precise definitions and statements can then be found in Section~\ref{Sec:Statements}.

\smallskip

Convergence of metric measure spaces, in particular, Riemannian manifolds, has attracted an enormous amount of attention. 
Especially, substantial effort has been devoted to establishing geometric criteria for the
convergence of spectral structures, e.g., see \cite{Fukaya1987,KasueKumura1994,KasueKumura1996,KuwaeUemura1997,KuwaeShioya2003,
LanciaMoscoVivaldi2008,KuwaeShioya2008,MoscoVivaldi2009,
Masamune2011,ChenCroydonKumagai2015,
Giglietal2015,Kasue2017}.

\smallskip

Our point of view is different. We establish a compactness result that shows that \emph{any} family of (uniformly elliptic) PDEs of the form $-\Div_{g_\ep,\mu_{\varepsilon}}(\mathbb L_\ep\nabla_{g_\ep})u=f$ defined  on a uniformly bi-Lipschitz diffeomorphic family of weighted Riemannian manifolds $(M_\ep, g_\ep,\mu_\ep)$ admits an 
\emph{$H$-convergent} subsequence. The latter notion has been introduced in the context of homogenization of elliptic PDEs on $\mathbb{R}^n$ (in divergence form and of second-order), see \cite{MT}. In particular, in our setting it yields the existence of a limiting manifold and a limiting elliptic PDE such that solutions to the elliptic PDE on $M_\ep$ converge as $\ep\downarrow 0$ to the solution of the limiting PDE.
Our approach in particular allows us to treat Riemannian manifolds which oscillate rapidly on a small length scale $0<\ep\ll 1$.
\smallskip

This should be compared with the seminal work by Kuwae and Shioya \cite{KuwaeShioya2003}, 
where spectral convergence is established for families of manifolds which are locally bi-Lipschitz diffeomorphic to a reference manifold with a 
bi-Lipschitz constant converging to $1$. In situations where the manifold features rapid oscillations, the family of diffeomorphisms between the manifolds is only uniformly bi-Lipschitz but not locally close to an isometry---and thus the approach in \cite{KuwaeShioya2003} is not applicable. In contrast, as we shall show, it is still possible to establish $H$-convergence, which in the symmetric case (e.g.~when considering the Laplace-Beltrami operator on $M_\ep$) implies Mosco-convergence of the associated energy forms, and the convergence of the associated spectrum. Moreover, our approach also applies to non-symmetric PDEs.
\smallskip

For general uniformly bi-Lipschitz diffeomorphic families of manifolds the limiting manifold and  PDE depends on the extracted subsequence. However, under geometric conditions for $(M_\ep)$, we can uniquely identify the limit by appealing to suitable homogenization formulas (see Section~\ref{S:ident}). In the flat case, a natural geometric condition is periodicity of the coefficient field. In the case of PDEs on Riemannian manifolds with a symmetry structure, or for  general manifolds that feature periodicity in local coordinates, we obtain similar identification results and  homogenization formulas.
\smallskip

The latter might be of interest for applications to  diffusion models in biomechanics, which is another motivation of our work. In this context, diffusion and reaction-diffusion processes in biological membranes and through interfaces are studied, e.g.~see \cite{Aizenbud1982, Jacobesn1983, Sbalzarini, neuss-radu}.
One observation made is that ``diffusion in biological membranes can
appear \textit{anisotropic} even though it is molecularly isotropic in all observed instances'', see \cite{Sbalzarini}. We present examples (see below) where anisotropic diffusion on surfaces emerges on large scales from isotropic diffusion on surfaces with rapidly oscillating geometry.
\medskip

\paragraph{\bf Examples}
Before stating our results in a general form, we illustrate our findings on the level of examples. 
In the following we present four examples. Each example considers a family of $2$-dimensional submanifolds $(M_\ep)$ in $\R^3$ given by an explicit formula and depending on a small parameter $\ep>0$. In the limit $\varepsilon\downarrow0$, $M_{\varepsilon}$ Hausdorff-converges (as a subset of $\mathbb{R}^3$) to a reference submanifold $M_0\subset\mathbb{R}^3$; however the spectrum of the associated Laplace-Beltrami operator on $M_{\varepsilon}$ does not converge to the spectrum of the one on $M_0$. Nevertheless, we can associate to $(M_{\ep})$ a $2$-dimensional submanifold $N_0\subset\R^3$ that captures the asymptotic spectral behavior of $(M_{\ep})$ in the limit $\varepsilon\downarrow 0$: The spectrum of the Laplace-Beltrami operator on $M_{\ep}$ converges to the spectrum of the Laplace-Beltrami operator on $N_0$ in the sense of Lemma~\ref{171017.cor} below. Proofs and further details are presented in Section~\ref{Sec:Examples}.
\medskip

\paragraph{\bf (a) A graphical surface with star-shaped corrugations.} For $R>0$ and a smooth, $2\pi$-periodic function $f\colon[0,\infty)\to\mathbb{R}$ we introduce the family $(M_{\varepsilon})$ of $2$-dimensional submanifolds of $\mathbb{R}^3$:
\begin{equation}\label{ex:a}
	M_{\varepsilon}:=\Big\{
	\begin{pmatrix}
		r\sin\theta\\
		r\cos\theta\\
		\varepsilon f(\tfrac{\theta}{\varepsilon})
	\end{pmatrix}
	;r\in(0,R),\theta\in[0,2\pi)
	\Big\}.
\end{equation} In Figure~\ref{Example1:Fig1} we present $M_{\varepsilon}$ for some values of $\varepsilon$ in the case $f=\sin^2$.
As an application of our results we show that the spectrum of the Laplace-Beltrami operator on $M_\varepsilon$ converges to the spectrum of the Laplace-Beltrami operator on the submanifold
\begin{equation}\label{Example1:Eq2}
	N_0:=\Big\{
	\begin{pmatrix}
		\rho_0(r)\sin\theta\\
		\rho_0(r)\cos\theta\\
		\int_0^r\sqrt{1-\rho_0'(t)^2}\,\mathrm{d}t
	\end{pmatrix}
	;r\in(0,R),\theta\in[0,2\pi)
	\Big\},
\end{equation}
where $\rho_0(r)=\tfrac{1}{2\pi}\int_0^{2\pi}\sqrt{f'(y)^2+r^2}\,\mathrm{d}y$, see Figure~\ref{Example1:Fig1}.
\begin{figure}[ht]
  \centering
  \begin{minipage}{0.2\textwidth}
    \centering\includegraphics[scale=0.7]{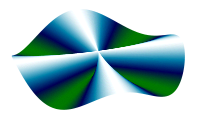}
  \end{minipage}\begin{minipage}{0.2\textwidth}
    \centering\includegraphics[scale=0.7]{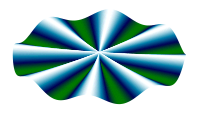}
  \end{minipage}\begin{minipage}{0.2\textwidth}
    \centering\includegraphics[scale=0.7]{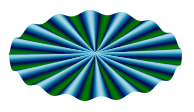}
  \end{minipage}\begin{minipage}{0.2\textwidth}
    \centering$\xrightarrow{\varepsilon\downarrow0}$
  \end{minipage}\begin{minipage}{0.2\textwidth}
    \centering\includegraphics[scale=0.7]{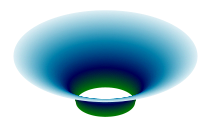}
  \end{minipage}\\\begin{minipage}{0.2\textwidth}
    \centering$\varepsilon=\tfrac{1}{2}$
  \end{minipage}\begin{minipage}{0.2\textwidth}
    \centering$\varepsilon=\tfrac{1}{4}$
  \end{minipage}\begin{minipage}{0.2\textwidth}
    \centering$\varepsilon=\tfrac{1}{8}$
  \end{minipage}\begin{minipage}{0.2\textwidth}
		\centering\mbox{}
  \end{minipage}\begin{minipage}{0.2\textwidth}
    \centering\mbox{}
  \end{minipage}
  \caption{A family of graphical surfaces with star-shaped corrugations. The three pictures on the left show $M_\ep$ defined by \eqref{ex:a} with $f=\sin^2$ and decreasing values of $\ep$. The picture on the right shows the limiting surface $N_0$ defined via \eqref{Example1:Eq2}. As $\ep\to 0$ the spectrum of the Laplace-Beltrami operator on $M_{\ep}$ converges to the spectrum of the Laplace-Beltrami operator on $N_0$. The color indicates the height component.}\label{Example1:Fig1}
\end{figure}
\medskip

\paragraph{\bf (b) A carambola-shaped sphere in $\R^3$.} We can transfer the example above from a graph over $\mathbb{R}^2$ to a sphere with oscillatory perturbation of its radius as depicted in Figure~\ref{Example1:Fig2}. More precisely, for a smooth $2\pi$-periodic function $f\colon[0,\infty)\to\mathbb{R}$ we consider the family $(M_{\varepsilon})$ of $2$-dimensional submanifolds of $\mathbb{R}^3$:
\begin{equation}\label{Example1:Eq3}
  M_\varepsilon
	:=\Big\{(1+\varepsilon f(\tfrac{\theta}{\varepsilon}))
		\begin{pmatrix}
			\sin\varphi\sin\theta\\
			\sin\varphi\cos\theta\\
			\cos\varphi
		\end{pmatrix};
		\varphi\in(0,\pi),\theta\in[0,2\pi)
	\Big\}.
\end{equation}
In that case a limiting submanifold is given by
\begin{equation}\label{Example1:Eq4}
  \refmani
	:=\Big\{\begin{pmatrix}
			\rho_0(\varphi)\sin\theta\\
			\rho_0(\varphi)\cos\theta\\
			\int_0^\varphi\sqrt{1-\rho_0'(t)^2}\,\mathrm{d}t
		\end{pmatrix};
		\varphi\in(0,\pi),\theta\in[0,2\pi)
	\Big\},
\end{equation}
where $\rho_0(\varphi)=\tfrac{1}{2\pi}\int_0^{2\pi}\sqrt{f'(y)^2+\sin^2\varphi}\,\mathrm{d}y$. See Figure~\ref{Example1:Fig2} for a visualization in the case $f=\sin^2$.
\begin{figure}[ht]
  \centering
  \begin{minipage}{0.2\textwidth}
    \centering\includegraphics[scale=0.75]{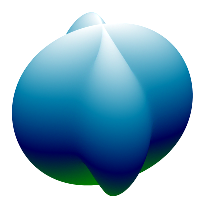}
  \end{minipage}\begin{minipage}{0.2\textwidth}
    \centering\includegraphics[scale=0.75]{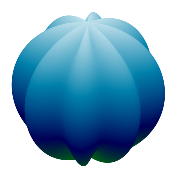}
  \end{minipage}\begin{minipage}{0.2\textwidth}
    \centering\includegraphics[scale=0.75]{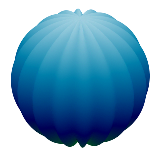}
  \end{minipage}\begin{minipage}{0.2\textwidth}
    \centering$\xrightarrow{\varepsilon\downarrow0}$
  \end{minipage}\begin{minipage}{0.2\textwidth}
    \centering\includegraphics[scale=0.75]{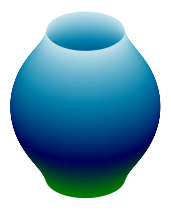}
  \end{minipage}\\\begin{minipage}{0.2\textwidth}
    \centering$\varepsilon=\tfrac{1}{2}$
  \end{minipage}\begin{minipage}{0.2\textwidth}
    \centering$\varepsilon=\tfrac{1}{4}$
  \end{minipage}\begin{minipage}{0.2\textwidth}
    \centering$\varepsilon=\tfrac{1}{8}$
  \end{minipage}\begin{minipage}{0.2\textwidth}
		\centering\mbox{}
  \end{minipage}\begin{minipage}{0.2\textwidth}
    \centering\mbox{}
  \end{minipage}
\caption{A family of spheres with radial perturbations oscillating with the longitude. The three pictures on the left show $M_\ep$ defined by \eqref{Example1:Eq3} with $f=\sin^2$ and decreasing values of $\ep$. The picture on the right shows the limiting surface $N_0$ defined via \eqref{Example1:Eq4}.}\label{Example1:Fig2}
\end{figure}
\medskip

\paragraph{\bf (c) A corrugated, rotationally symmetric submanifold in $\R^3$.} In contrast to the previous example we assume a sphere with radial perturbations with the latitude, i.e.\ for a smooth $\pi$-periodic function $f\colon[0,\infty)\to\mathbb{R}$ we consider the family $(M_{\varepsilon})$ of $2$-dimensional submanifolds of $\mathbb{R}^3$:
\begin{equation}\label{Example2:Eq3}
  M_\varepsilon
	:=\Big\{(1+\varepsilon f(\tfrac{\varphi}{\varepsilon})
		\begin{pmatrix}
			\sin\varphi\sin\theta\\
			\sin\varphi\cos\theta\\
			\cos\varphi
		\end{pmatrix};
		\varphi\in(0,\pi),\theta\in[0,2\pi)
	\Big\}.
\end{equation}
In that case a limiting submanifold is given by
\begin{equation}\label{Example2:Eq4}
  \refmani
	:=\Big\{\begin{pmatrix}
			\sin\varphi\sin\theta\\
			\sin\varphi\cos\theta\\
			\int_0^\varphi\sqrt{\tfrac{\rho_0(t)^2}{\sin^2t}-\cos^2t}\,\mathrm{d}t
		\end{pmatrix};
		\varphi\in(0,\pi),\theta\in[0,2\pi)
	\Big\},
\end{equation}
where $\rho_0(\varphi)=\tfrac{\sin\varphi}{\pi}\int_0^{\pi}\sqrt{f'(y)^2+1}\,\mathrm{d}y$. See Figure~\ref{Example2:Fig2} for the case $f=\sin^2$.
\begin{figure}[ht]
  \centering
  \begin{minipage}{0.2\textwidth}
    \centering\includegraphics[scale=0.75]{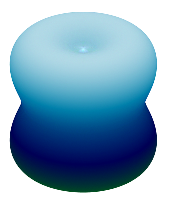}
  \end{minipage}\begin{minipage}{0.2\textwidth}
    \centering\includegraphics[scale=0.75]{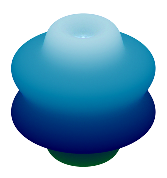}
  \end{minipage}\begin{minipage}{0.2\textwidth}
    \centering\includegraphics[scale=0.75]{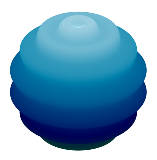}
  \end{minipage}\begin{minipage}{0.2\textwidth}
    \centering$\xrightarrow{\varepsilon\downarrow0}$
  \end{minipage}\begin{minipage}{0.2\textwidth}
    \centering\includegraphics[scale=0.75]{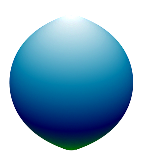}
  \end{minipage}\\\begin{minipage}{0.2\textwidth}
    \centering$\varepsilon=\tfrac{1}{2}$
  \end{minipage}\begin{minipage}{0.2\textwidth}
    \centering$\varepsilon=\tfrac{1}{4}$
  \end{minipage}\begin{minipage}{0.2\textwidth}
    \centering$\varepsilon=\tfrac{1}{8}$
  \end{minipage}\begin{minipage}{0.2\textwidth}
		\centering\mbox{}
  \end{minipage}\begin{minipage}{0.2\textwidth}
    \centering\mbox{}
  \end{minipage}
\caption{A family of spheres with radial perturbations oscillating with the latitude. The three pictures on the left show $M_\ep$ defined by \eqref{Example2:Eq3} with $f=\sin^2$ and decreasing values of $\ep$. The picture on the right shows the limiting surface $N_0$ defined via \eqref{Example2:Eq4}.}\label{Example2:Fig2}
\end{figure}
\medskip

\paragraph{\bf (d) A locally corrugated graphical surface.} Consider a relatively-compact open set $Y\subset\mathbb{R}^2$ and a set $Z\in Y$ of isolated points. For every point $z\in Z$ we use a smooth function $\psi_z\colon[0,\infty)\to[0,1]$ to define a rotationally symmetric cut-off function $\psi_z(|\cdot-z|)$ such that
\begin{equation*}
	\begin{cases}
		\text{$\psi_z(0)=1$,}&\\
		\text{$\supp\psi_z(|\cdot-z|)\cap\supp\psi_{z'}(|\cdot-z'|)=\emptyset$ for all $z'\in Z\setminus\{z\}$.}
	\end{cases}
\end{equation*}
Now we consider a smooth $T$-periodic function $f\colon[0,\infty)\to\mathbb{R}$ and the set $M_{\varepsilon}$ which is the graph of the function
\begin{equation}\label{Example2:Eq5}
  Y\setminus Z\ni x\mapsto\sum_{z\in Z}\varepsilon f\big(\tfrac{|x-z|}{\varepsilon}\big)\psi_z(|x-z|)\in\mathbb{R}^3,
\end{equation}
which we regard as a two-dimensional submanifold of $\R^3$. In that case a limiting submanifold is given by
\begin{equation}\label{Example2:Eq6}
  Y\setminus Z\ni x\mapsto\sum_{z\in Z}\int_0^{|x-z|}\sqrt{\tfrac{\rho_{0,z}(t)^2}{t^2}-1}\,\mathrm{d}t\in\mathbb{R}^3,
\end{equation}
where $\rho_{0,z}(r)=\tfrac{r}{T}\int_0^{T}\sqrt{f'(y)^2\psi_z(r)^2+1}\,\mathrm{d}y$. See Figure~\ref{Example2:Fig3} for the case $f=\sin^2$.
\begin{figure}[ht]
  \centering
  \begin{minipage}{0.2\textwidth}
    \centering\includegraphics[scale=0.65]{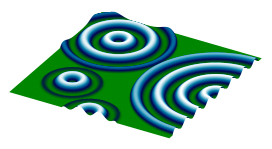}
  \end{minipage}\begin{minipage}{0.2\textwidth}
    \centering\includegraphics[scale=0.65]{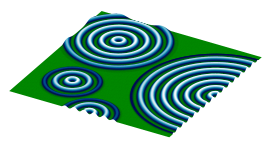}
  \end{minipage}\begin{minipage}{0.2\textwidth}
    \centering\includegraphics[scale=0.65]{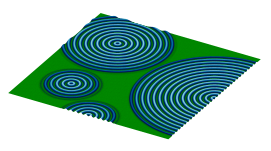}
  \end{minipage}\begin{minipage}{0.2\textwidth}
    \centering$\xrightarrow{\varepsilon\downarrow0}$
  \end{minipage}\begin{minipage}{0.2\textwidth}
    \centering\includegraphics[scale=0.65]{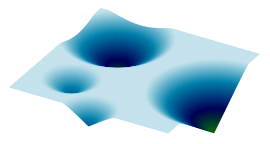}
  \end{minipage}\\\begin{minipage}{0.2\textwidth}
    \centering$\varepsilon=\tfrac{1}{2}$
  \end{minipage}\begin{minipage}{0.2\textwidth}
    \centering$\varepsilon=\tfrac{1}{4}$
  \end{minipage}\begin{minipage}{0.2\textwidth}
    \centering$\varepsilon=\tfrac{1}{8}$
  \end{minipage}\begin{minipage}{0.2\textwidth}
		\centering\mbox{}
  \end{minipage}\begin{minipage}{0.2\textwidth}
    \centering\mbox{}
  \end{minipage}
\caption{A family of locally corrugated graphical surfaces. The three pictures on the left show $M_\ep$ defined via \eqref{Example2:Eq5} with $f=\sin^2$ and decreasing values of $\ep$. The picture on the right shows the limiting surface $N_0$ defined via \eqref{Example2:Eq6}.}\label{Example2:Fig3}
\end{figure}
\bigskip

\paragraph{\bf General setting and the structure of the paper}
Throughout this paper we consider weighted Riemannian manifolds $M=(M,g,\mu)$ with metric $g$ and measure $\mu$. We \textit{always} assume that $M$ is \textit{$n$-dimensional (with $n\geq 2$), smooth, connected, without boundary}, and that $\mu$ has a smooth positive density against the Riemannian volume associated with $g$. We refer to the end of the introduction for a summary of standard notation that we use in this paper. The examples discussed above belong to the following general setting:

\begin{definition}[Uniformly bi-Lipschitz diffeomorphic families of manifolds]\label{D:biLip}
  
 A family of weighted Riemannian manifolds $(M_\ep,g_\ep,\mu_\ep)$ indexed by $0<\ep<1$ is called \emph{uniformly bi-Lipschitz diffeomorphic}, 
 if there exits a weighted Riemannian manifold $( M_0, g_0, \mu_0)$ and a constant $C$ such that for all $\varepsilon$ there exist diffeomorphisms $h_{\varepsilon}\colon  M_0\to M_{\varepsilon}$ with
  \begin{equation}\label{unifbound}
    \tfrac{1}{C}|\xi|\leq|dh_{\varepsilon}(x)\xi|\le C|\xi|
    \qquad\text{for all $x\in M_0$ and $\xi\in T_xM_0$}.
  \end{equation}
  We call $( M_0, g_0,\mu_0)$ reference manifold.
\end{definition}
In the setting of \eqref{unifbound} the Laplace-Beltrami operator on $M_{\varepsilon}$ gives rise to a second-order elliptic operator $\Div(\mathbb{L}_{\varepsilon}\nabla)$ on $ M_0$ with a uniformly elliptic coefficient field $\mathbb{L}_{\varepsilon}$, i.e.
\begin{equation*}
   g_0(\xi,\mathbb{L}_{\varepsilon}\xi)\geq\tfrac{1}{C^{n+2}}|\xi|^2,
	\qquad
	 g_0(\xi,\mathbb{L}_{\varepsilon}^{-1}\xi)\geq {C^{n+2}}|\xi|^2
	\qquad\text{for every $\xi\in TM_0$},
\end{equation*}
see Section \ref{SpecConv} for further details. It is therefore natural to consider homogenization of elliptic operators on the reference manifold with oscillating coefficients and measure. This is done in Section~\ref{Sec:Statement of the main results}, where our results are presented. 

Our main result, cf.~Theorem~\ref{T1}, is a compactness result for $H$-convergence. In the symmetric case (e.g., for the Laplace-Beltrami operator) $H$-convergence implies \textit{Mosco-convergence} of the associated energy forms, cf.~Lemma~\ref{L:HtoMo}, and the \textit{convergence of the spectrum} of the associated second-order elliptic operators $-\Div(\mathbb L_\varepsilon \nabla)$, cf. Lemma~\ref{L:HtoSpec}. In Section~\ref{S:ident} we address the problem of identifying the limiting PDE and manifold. In particular, we provide a \textit{homogenization formula} for manifolds that feature \emph{periodicity} in local coordinates. In Section~\ref{SpecConv} we discuss the application to families of parametrized manifolds that are bi-Lipschitz diffeomorphic. In particular, for such families, we establish spectral convergence (along subsequences) in Lemma~\ref{171017.cor} and discuss the special case of families of submanifolds of $\R^d$, see Lemma~\ref{L:submani}. 
In Section~\ref{Sec:Examples} we discuss concrete examples as the ones presented above. All proofs of the results in this paper are presented in Section~\ref{SecProofs}. %
\medskip

\textbf{Notation.}
For the background of the analysis on manifolds, we refer the readers to \cite{Grigoryan2009,Jost2011}.
\begin{itemize}
\item Let $\Omega\subset M$ open. We write 
$\omega \Subset \Omega$ if $\omega$ is an open set such that the closure $\overline \omega$ is compact and $\overline \omega\subset \Omega$.
\item We use $h$ for a diffeomorphism between manifolds and denote its differential by $dh$.
We use $\mathbb L$ for a measurable $(1,1)$-tensor field on a manifold. We call $\mathbb L$ also a \textit{coefficient field} on the manifold.
\item We use the notation $(\cdot,\cdot)(x)=g(\cdot,\cdot)(x)$ and $|\xi|(x)=\sqrt{g(\xi,\xi)(x)}$ to 
denote the inner product and induced norm in $T_xM$ at $x\in M$. 
We tacitly simply write $(\xi,\eta)$ and $|\xi|$ instead of $(\xi,\eta)(x)$ and 
$|\xi|(x)$ if the meaning is clear from the context. 
\item For a (sufficiently regular) function $u$ and vector field $\xi$ on $\Omega$, the gradient of $u$ is denoted by $\nabla_g u$
  and the divergence of $\xi$ is denoted by $\Div_{g,\mu}\xi$, i.e., we have $g(\nabla_g u,\xi)=\xi u=du(\xi)$ and 
$-\int_\Omega g(\Div_{g,\mu} \xi,u)\,\mathrm d\mu=-\int_\Omega g(\xi,\nabla_g u)\,\mathrm d\mu$ 
provided either $u$ or $\xi$ are compactly supported.
In particular, we write $\triangle_{g,\mu}:=\Div_{g,\mu}\nabla_g$ to denote the (weighted) 
Laplace-Beltrami operator. 
If the meaning is clear from the context, we shall simply write 
$\nabla,\Div$, and $\Delta$. 
In some situations the Riemannian manifold will be parametrized by the parameter 
$\varepsilon$; 
in that case, we may us the notation $\nabla_{\varepsilon}, 
\Div_{\varepsilon}$ and $\triangle_{\varepsilon}$. If there is no danger of confusion, we may drop the index $\varepsilon$ in the notation.
\item For $\Omega\subset M$ open we denote by $L^2(\Omega,g,\mu)$ the Hilbert space of 
square integrable functions and denote by
\begin{equation*}
  \|u\|_{L^2(\Omega,g,\mu)}^2:=\int_\Omega |u|^2\,\mathrm d\mu
\end{equation*}
the associated norm. We denote by $L^2(T\Omega)$ the space of measurable sections $\xi$ of $T\Omega$ such that $|\xi|\in L^2(\Omega,g,\mu)$.
\item We denote by $C^\infty_c(\Omega)$ the space of smooth compactly supported functions, 
and by $H^1(\Omega,g,\mu)$ the usual Sobolev space on $(\Omega,g,\mu)$, 
i.e.~the space of functions $u\in L^2(\Omega,g,\mu)$ with distributional first derivatives in $L^2(\Omega,g,\mu)$. 
Equipped with the norm 
\begin{equation*}
  \|u\|_{H^1(\Omega,g,\mu)}^2:=\int_M |u|^2+|\nabla u|^2\,\mathrm d\mu
\end{equation*}
(and the usual inner product), $H^1(\Omega,g,\mu)$ is a Hilbert space.
\item We denote by $H_0^1(\Omega,g,\mu)$ the closure of $C^\infty_c(\Omega)$ in $H^1(\Omega,g,\mu)$. 
We denote by $H^{-1}(\Omega,g,\mu)$ the dual space to $H^1_0(\Omega,g,\mu)$ and use the notation
$\langle F,u\rangle_{(\Omega,g,\mu)}$ to denote the dual pairing of $F\in H^{-1}(\Omega,g,\mu)$ and $u\in H^1_0(M,g,\mu)$.
\end{itemize}
We tacitly simply write  $\Omega$ (instead of $(\Omega,g,\mu)$), 
$L^2(\Omega)$, $H^1(\Omega)$, $\|\cdot\|_{L^2(\Omega)}$, $\|\cdot\|_{H^1(\Omega)}$, 
$\langle\cdot,\cdot\rangle$,  if the meaning is clear from the context. 
\section{Statement of the main results} \label{Sec:Statement of the main results}\label{Sec:Statements}
\subsection{H-, Mosco- and spectral convergence}\label{Sec:H-, Mosco- and spectral convergence}
We are interested in second-order elliptic operators of the form
\begin{equation*}
  -\Div(\mathbb L \nabla)\colon H^1_0(\Omega)\to H^{-1}(\Omega),\qquad \Omega\subset M\text{ open},
\end{equation*}
where $-\Div=-\Div_{g,\mu}\colon L^2(T\Omega)\to H^{-1}(\Omega)$ is 
the adjoint of $\nabla=\nabla_g\colon H^1_0(\Omega)\to L^2(T\Omega)$, 
and $\mathbb L$ denotes a uniformly elliptic coefficient field defined on $\Omega$. 
More precisely, for $0<\lambda\leq \Lambda$ and $\Omega\subset M$ open, 
we denote by $\mathcal{M}(\Omega,\lambda,\Lambda)$ the set of all measurable 
coefficient fields $\mathbb L\colon \Omega\to \operatorname{Lin}(T\Omega)$ that 
are uniformly elliptic and bounded in the sense that for $\mu$-a.e.\ $x\in\Omega$ 
and all $\xi\in T_x\Omega$
\begin{align}
\label{ellipt1}  g(\xi,\mathbb L(x)\xi)&\geq\lambda|\xi|^2,\\
\label{ellipt2}  g(\xi,(\mathbb L(x))^{-1}\xi)&\geq\tfrac{1}{\Lambda}|\xi|^2.
\end{align}
Moreover, we denote by $m_0(\Omega)$ the infimum of all $m\in\R$ such that
\begin{equation*}
  \inf\Big\{\int_\Omega m |u|^2+g(\nabla u,\nabla u)\,\mathrm{d}\mu;u\in H^1_0(\Omega)\text{ with }\|u\|_{H^1_0(\Omega)}=1\Big\}>0.
\end{equation*}
(See Remark~\ref{R:m0} below for a discussion of $m_0(\Omega)$). 
Given a family $(\mathbb L_{\ep})_{\ep>0}\subset\mathcal M(\Omega,\lambda,\Lambda)$ and 
$f\in H^{-1}(\Omega)$ we study the asymptotic behavior as $\ep\downarrow 0$ 
of the solution $u_\ep\in H^1_0(\Omega)$ to the equation
\begin{equation}
  \label{eq1}
  m u_\ep-\Div(\mathbb L_\ep \nabla u_{\ep})=f\qquad\text{ in }H^{-1}(\Omega),
\end{equation}
where $m$ denotes a fixed scalar satisfying $m>\frac{m_0(\Omega)}{\lambda}$. 
\begin{remark}\label{R:m0}
  By the Lax-Milgram lemma, \eqref{eq1} admits a unique solution $u_{\ep}\in H^1_0(\Omega)$ satisfying
  \begin{equation}
    \label{0}
    \|u_{\ep}\|_{H^1(\Omega)}\le C(\Omega,\lambda,m)\|f\|_{H^{-1}(\Omega)}.
  \end{equation}
  We briefly comment on the constant $m_0(\Omega)$, which appears in the lower bound condition for $m$ in \eqref{eq1}. 
  If $\Omega\Subset M$ is relatively-compact and connected, then Poincar\'e's inequality (for functions with zero mean) holds:
  \begin{equation*}
    \forall u\in H^1(\Omega)\,:\qquad \int_\Omega \Big|u-\tfrac{1}{\mu(\Omega)}\int_\Omega u\,\dm\Big|^2\,\dm\leq C_\Omega \int_\Omega |\nabla u|^2\,\dm.
  \end{equation*}
  In this case we have  $m_0(\Omega)\leq 0$, and in \eqref{eq1} any $m>0$ is admissible.  
  Also note that, the condition $m_0(\Omega)<0$ is equivalent to the validity of Poincare's inequality (for functions with vanishing boundary conditions):
  \begin{equation}\label{poincare}
    \forall u\in H^1_0(\Omega)\,:\qquad \int_\Omega |u|^2\,\dm\leq C'_\Omega \int_\Omega |\nabla u|^2\,\dm,
  \end{equation}
  where $C'_\Omega>0$ denotes a generic constant (only depending on $n$). Moreover, if $m_0(\Omega)<0$, then  in \eqref{eq1} we may then consider the case $m=0$.
\end{remark}

\paragraph{\bf H-compactness}
Our first main result is a compactness result concerning the homogenization limit
$\ep\downarrow 0$. 
It relies on the notion of $H$-convergence which goes back to the seminal work by 
Murat and Tartar (\cite{MT}) where the notion is introduced in the flat case $M=\R^n$. 
It is a generalization of the notion of $G$-convergence by Spagnolo and De Giorgi. 
The definition of $H$-convergence can be phrased in our setting as follows:
\begin{definition}[$H$-convergence]
  Let $\Omega\subset M$ be open. We say a sequence $(\mathbb{L}_{\ep})\subset\mathcal{M}(\Omega,\lambda,\Lambda)$ \emph{$H$-converges} 
  in $(\Omega,g,\mu)$ to $\mathbb{L}_0\in\mathcal M(\Omega,\lambda,\Lambda)$ as $\ep\to 0$, 
  if for any relatively-compact open subset $\omega\Subset\Omega$ with $m_0(\omega)<0$, and any $f\in H^{-1}(\omega)$, 
  the unique solutions $u_\ep,u_0\in H^1_0(\omega)$ to
  \begin{equation*}
    \begin{aligned}
      -\Div(\mathbb L_\ep \nabla u_{\ep})=-\Div(\mathbb L_0\nabla u_0)=f\qquad\text{in }H^{-1}(\omega)
    \end{aligned}
  \end{equation*}
  satisfy
  \begin{equation*}
    \begin{cases}
      u_{\ep}\rightharpoonup u_0&\quad\text{weakly in }H^1(\omega),\\
      \mathbb{L}_{\ep}\nabla u_{\ep}\rightharpoonup\mathbb{L}_0\nabla u_0	&\quad\text{weakly in }L^2(T\omega).
    \end{cases}
  \end{equation*}
  In that case we write $\mathbb L_\ep\Hto\mathbb L_0$ in $(\Omega,\mu,g)$ as $\ep\to 0$. 
\end{definition}
Our main result is the following $H$-compactness statement:
\begin{theorem}\label{T1}
  Let $\lambda,\Lambda>0$ and let $(\mathbb{L}_{\ep})$ denote a sequence in $\mathcal{M}(M,\lambda,\Lambda)$. 
  Then there exist a subsequence (not relabeled) and $\mathbb L_0\in\mathcal M(M,\lambda,\Lambda)$ such that the following holds:
  \begin{enumerate}[(a)]
  \item\label{th2:a}$\mathbb{L}_{\ep}\Hto\mathbb{L}_0$ in $(M,g,\mu)$.
  \item\label{th2:b}For every $\Omega\subset M$ open, every $m>\frac{m_0(\Omega)}{\lambda}$, 
  and sequences $(f_{\ep})\subset L^2(\Omega)$ and $(F_\ep)\subset L^2(T\Omega)$ with
	\begin{equation*}
		\begin{cases}
			f_{\ep}\rightharpoonup f_0&\quad\text{weakly in }L^2(\Omega),\\
			F_{\ep}\to F_0&\quad\text{in }L^2(T\Omega),
		\end{cases}
	\end{equation*}
	the solutions $u_{\ep},u_0\in H^1_0(\Omega)$ to 
        \begin{equation}\label{eq.problems}
          \begin{aligned}
            &mu_{\ep}-\Div(\mathbb{L}_{\ep}\nabla u_{\ep})=f_{\ep}+\Div F_{\ep}&&\quad\text{in }H^{-1}(\Omega),\\
            &mu_0-\Div(\mathbb{L}_0\nabla u_0)=f_0+\Div F_0&&\quad\text{in }H^{-1}(\Omega),
          \end{aligned}
	\end{equation}
	satisfy
	\begin{equation*}
		\begin{cases}
			u_{\ep}\rightharpoonup u_0&\quad\text{weakly in }H^1_0(\Omega),\\
			\mathbb{L}_{\ep}\nabla u_{\ep}\rightharpoonup\mathbb{L}_0\nabla u_0&\quad\text{weakly in }L^2(T\Omega).
		\end{cases}
	\end{equation*}
	Additionally we have $u_{\ep}\to u_0$ strongly in $L^2(\Omega)$, if either $H^1_0(\Omega)$ is compactly embedded in $L^2(\Omega)$, or 
	$m\neq 0$ and $f_{\ep}\to f_0$ strongly in $L^2(\Omega)$.
      \end{enumerate}
    \end{theorem}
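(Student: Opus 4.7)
The plan is to adapt the classical Murat--Tartar $H$-compactness argument to the weighted Riemannian setting. The three pillars are: (i)~exhaustion of $M$ by relatively-compact subdomains on which the Dirichlet--Poincaré inequality holds, so that Lax--Milgram yields uniform $H^1_0$-bounds via~\eqref{0}; (ii)~weak compactness of solutions and fluxes to canonical reference problems, combined with a Cantor diagonal extraction; and (iii)~a local div-curl (compensated compactness) lemma which identifies the limit flux map as a measurable uniformly elliptic tensor field $\mathbb{L}_0\in\mathcal M(M,\lambda,\Lambda)$.

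First I would fix a smooth exhaustion $\omega_1\Subset\omega_2\Subset\cdots\Subset M$ by connected relatively-compact open sets with smooth boundary, so that $m_0(\omega_k)<0$. Working on a fixed $\omega=\omega_k$, choose a countable dense subset $\{f_j\}\subset H^{-1}(\omega)$ and let $u_{\varepsilon}^j,v_{\varepsilon}^j\in H^1_0(\omega)$ solve
\[
  -\Div(\mathbb{L}_{\varepsilon}\nabla u_{\varepsilon}^j)=f_j,\qquad -\Div(\mathbb{L}_{\varepsilon}^\top\nabla v_{\varepsilon}^j)=f_j,
\]
where $\mathbb{L}_{\varepsilon}^\top$ denotes the pointwise $g$-transpose, which also lies in $\mathcal M(\omega,\lambda,\Lambda)$. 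Estimate~\eqref{0} provides uniform $H^1_0$-bounds, and the fluxes $\sigma_{\varepsilon}^j:=\mathbb{L}_{\varepsilon}\nabla u_{\varepsilon}^j$, $\tau_{\varepsilon}^j:=\mathbb{L}_{\varepsilon}^\top\nabla v_{\varepsilon}^j$ are uniformly bounded in $L^2(T\omega)$. A Cantor diagonal extraction over $j$ and then $k$ produces a single subsequence (not relabeled) along which $u_{\varepsilon}^j\rightharpoonup u_0^j$, $v_{\varepsilon}^j\rightharpoonup v_0^j$ in $H^1_0(\omega_k)$ and $\sigma_{\varepsilon}^j\rightharpoonup\sigma_0^j$, $\tau_{\varepsilon}^j\rightharpoonup\tau_0^j$ in $L^2(T\omega_k)$ for every $j,k$.

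The central step is a manifold div-curl lemma: if $\xi_{\varepsilon}\rightharpoonup\xi_0$ weakly in $L^2(T\omega)$ with $(\Div\xi_{\varepsilon})$ precompact in $H^{-1}(\omega)$ and $\nabla\varphi_{\varepsilon}\rightharpoonup\nabla\varphi_0$ in $L^2(T\omega)$ for some $H^1$-bounded $\varphi_{\varepsilon}$, then $g(\xi_{\varepsilon},\nabla\varphi_{\varepsilon})\to g(\xi_0,\nabla\varphi_0)$ in $\mathcal{D}'(\omega)$. This is purely local and reduces, via a finite atlas and a subordinate partition of unity, to the standard Euclidean div-curl lemma; the smoothness of $g$ and the smooth positive density of $\mu$ contribute only harmless multiplicative factors. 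Applied to the pairs $(\sigma_{\varepsilon}^j,v_{\varepsilon}^k)$ and $(\tau_{\varepsilon}^k,u_{\varepsilon}^j)$, together with the equations solved by $u_{\varepsilon}^j$ and $v_{\varepsilon}^k$, it yields the key symmetry
\[
  g(\sigma_0^j,\nabla v_0^k)=g(\tau_0^k,\nabla u_0^j)\quad\text{in }\mathcal{D}'(\omega).
\]
By continuity of the solution operator~\eqref{0}, density of $\{f_j\}$ in $H^{-1}(\omega)$ implies that $\{\nabla u_0^j(x)\}_j$ spans $T_xM$ for $\mu$-a.e.\ $x\in\omega$, so one may set $\mathbb{L}_0(x)\nabla u_0^j(x):=\sigma_0^j(x)$; the symmetry above ensures well-definedness on finite linear combinations, and the ellipticity bounds $\mathbb{L}_0\in\mathcal M(\omega,\lambda,\Lambda)$ follow from weak lower semicontinuity applied to the nonnegative quadratic forms $g(\nabla u_{\varepsilon}^j-\nabla u_0^j,\mathbb{L}_{\varepsilon}(\nabla u_{\varepsilon}^j-\nabla u_0^j))$ and the analogous quantity for $\mathbb{L}_{\varepsilon}^{-1}$, whose weak limits are computed by a second application of div-curl. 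Consistency on overlaps $\omega_k\Subset\omega_{k+1}$ is automatic from the characterization, yielding $\mathbb{L}_0\in\mathcal M(M,\lambda,\Lambda)$.

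Part~(a) then follows for arbitrary $f\in H^{-1}(\omega)$ by density and the uniform estimate~\eqref{0}. For part~(b), given $(f_{\varepsilon},F_{\varepsilon})$ and $m>m_0(\Omega)/\lambda$, Lax--Milgram bounds $u_{\varepsilon}$ uniformly in $H^1_0(\Omega)$; any weak subsequential limit $u_*$ is identified with the unique solution $u_0$ of~\eqref{eq.problems}$_2$ by applying the div-curl lemma to $\mathbb{L}_{\varepsilon}\nabla u_{\varepsilon}-F_{\varepsilon}$ tested against the oscillating functions $v_{\varepsilon}^j$ constructed above, restricted to arbitrary $\omega_k\Subset\Omega$. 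Strong $L^2$-convergence under the stated hypotheses is standard: in the compactly-embedded case it is Rellich's theorem, and otherwise testing the equation for $u_{\varepsilon}-u_0$ against itself and using $m>0$ together with the strong convergence of $f_{\varepsilon}$ provides it. The main obstacle is the simultaneous treatment of $\mathbb{L}_{\varepsilon}$ and its $g$-transpose needed in the non-symmetric case, together with the careful localization of the div-curl lemma on $(M,g,\mu)$ and the verification that the pointwise a.e.\ construction of $\mathbb{L}_0$ is independent of the exhaustion, the dense family $\{f_j\}$, and the extracted subsequence.
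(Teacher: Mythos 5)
Your proposal stays inside the same method family as the paper (Murat--Tartar oscillating test functions for $\mathbb{L}_\varepsilon$ and its adjoint, identified through a div-curl lemma), but it is organized along a genuinely different route. The paper localizes to geodesic balls $B$ of radius below the injectivity radius, where a \emph{finite} family $v_1,\dots,v_n\in C^\infty_c(B)$ with $\operatorname{span}\{\nabla v_k(y)\}=T_y(\tfrac12 B)$ for \emph{every} $y$ is available (Remark~\ref{R:vs}); there $\mathbb{L}_0$ is defined from exactly $n$ limit fluxes (Proposition~\ref{P1}), the local limits are glued by the locality/uniqueness statement (Lemma~\ref{L:local}), and part (b) is obtained by a partition of unity subordinate to the balls together with Lemma~\ref{lemB}. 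You instead work on an exhaustion by large domains and generate test functions from a countable dense family of right-hand sides for the operator and its transpose, defining $\mathbb{L}_0(x)$ pointwise on the span of the limit gradients via the Tartar symmetry identity. This buys a chart-free, global construction and avoids the partition-of-unity bookkeeping; the price is that all the difficulty is pushed into pointwise a.e.\ linear algebra for an infinite family, and it is exactly there that your write-up asserts rather than proves.

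Two of these assertions need real arguments. First, ``density of $\{f_j\}$ in $H^{-1}(\omega)$ implies $\{\nabla u_0^j(x)\}_j$ spans $T_x\omega$ for a.e.\ $x$'' is not immediate: you need the limit solution operator to be a bijection of $H^{-1}(\omega)$ onto $H^1_0(\omega)$ (this is the content of Lemma~\ref{lem3}), so that $\{u_0^j\}$ is dense in $H^1_0(\omega)$, \emph{and} you need some countable family in $H^1_0(\omega)$ whose gradients already span a.e.\ --- which is essentially Remark~\ref{R:vs} again, via a countable cover by small balls --- plus an a.e.-convergence argument to transfer the spanning and rule out degeneracy on a set of positive measure; the same is needed for the adjoint family $\{\nabla v_0^k\}$, which your well-definedness argument uses. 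So the ball/chart input you hoped to bypass reappears here, and the measurability of the pointwise-defined $\mathbb{L}_0$ (and the identity $\tau_0^k=\mathbb{L}_0^*\nabla v_0^k$, which your part (b) identification needs) should be recorded explicitly. Second, ``consistency on overlaps $\omega_k\Subset\omega_{k+1}$ is automatic'' is too quick: the reference solutions on $\omega_k$ and $\omega_{k+1}$ solve different Dirichlet problems, so agreement of the two constructions on $\omega_k$ requires a locality argument (a div-curl comparison of the two families --- the analogue of Lemma~\ref{L:local}(a)), not just ``the characterization''. Both gaps are fixable with short arguments, so the route is viable; but as written these are precisely the steps that carry the manifold-specific difficulty. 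A minor point: in the strong $L^2$-convergence step the hypothesis is $m\neq0$ (possibly negative when $m_0(\Omega)<0$), not $m>0$; the energy identity $m\int_\Omega u_\varepsilon^2\to m\int_\Omega u_0^2$, obtained via the div-curl lemma as in the paper, works for either sign, whereas ``testing the equation for $u_\varepsilon-u_0$ against itself'' is not literally available since the two operators differ.
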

    For the proof see Section~\ref{S:pT1}. The theorem is an extension of a classical result in \cite{MT} 
    where (scalar) elliptic operators of the form $-\mbox{div} (A_\ep\nabla)$ on $\R^n$ are considered. 
    It  has been extended to a large class of elliptic equations on $\R^n$ including e.g.~linear elasticity 
    \cite{Francfort1986} and monotone operators for vector valued fields (\cite{Francfort2009}). See also \cite{Waurick} for a variant that applies to non-local operators.
    \medskip
    
    In the following we briefly comment on the proof of Theorem~\ref{T1}, which is based on Murat and Tartar's method of oscillating test-functions. 
    In contrast to the classical flat case $M=\R^n$, we require a localization argument, since the tangent spaces $T_xM$ 
    change when $x$ varies in $M$. We therefore first establish $H$-compactness restricted to sufficiently small balls $B$ (see Proposition~\ref{P1} below) and then argue by covering $M$ with countably many of such balls. 
\begin{proposition}[$H$-compactness on small balls]\label{P1}
Let $(\mathbb{L}_{\ep})\subset\mathcal{M}(M,\lambda,\Lambda)$ and let $B\Subset M$ denote an open ball with radius smaller than the injectivity radius at its center. Then there exits $\mathbb{L}_0\in\mathcal{M}(\tfrac{1}{2}B,\lambda,\Lambda)$ and a (not relabeled) subsequence of $(\mathbb{L}_{\ep})$ such that $\mathbb{L}_{\ep}\Hto\mathbb{L}_0$ in $\frac{1}{2}B$, which is the open ball with the same center point and half the radius of $B$.
\end{proposition}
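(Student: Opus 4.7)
The plan is to reduce the problem on the geodesic ball $B$ to the classical Murat-Tartar $H$-compactness theorem on a Euclidean ball by working in normal coordinates.

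First, I would exploit that the radius of $B$ lies below the injectivity radius at its center $x_0$: the exponential map at $x_0$, composed with an isometric identification $T_{x_0}M\simeq\R^n$, gives a smooth diffeomorphism $\phi\colon B\to B'\subset\R^n$ onto a Euclidean ball. In these coordinates, the metric becomes a smooth field of positive-definite matrices $G$ and the measure $\mu$ has a smooth strictly positive Lebesgue density $\rho$. A direct calculation shows that the bilinear form $\int_\omega g(\mathbb L_\ep\nabla u,\nabla v)\,\dm$ transforms into the standard Euclidean divergence-form pairing
\begin{equation*}
\int_{\phi(\omega)}A_\ep\nabla\tilde u\cdot\nabla\tilde v\,\mathrm{d}x,\qquad A_\ep:=\rho\,L_\ep G^{-1},
\end{equation*}
where $L_\ep$ is the matrix of $\mathbb L_\ep$ in the chart and $\tilde u=u\circ\phi^{-1}$. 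Since $G$ and $\rho$ are smooth with uniform bounds on the relatively-compact ball $B$, the manifold ellipticity bounds $\lambda,\Lambda$ of $\mathbb L_\ep$ yield $\ep$-independent Euclidean ellipticity bounds $\alpha,\beta$ for $(A_\ep)$ on $B'$.

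Second, I would invoke the classical Murat-Tartar $H$-compactness theorem on $B'$ to extract a (not relabeled) subsequence with $A_\ep\Hto A_0$ in the Euclidean sense. Setting $L_0:=\rho^{-1}A_0G$ and pulling back through $\phi$ defines a measurable $(1,1)$-tensor field $\mathbb L_0$ on $\tfrac12B$. Since the correspondence $u\leftrightarrow\tilde u$, $\mathbb L_\ep\nabla u\leftrightarrow A_\ep\nabla\tilde u$ is a pointwise linear identification through the smooth, bounded, $\ep$-independent factors $G,\rho$, the Euclidean $H$-convergence on subsets of $\phi(\tfrac12B)\Subset B'$ translates directly to the manifold statement: for any $\omega\Subset\tfrac12B$ with $m_0(\omega)<0$ and any $f\in H^{-1}(\omega)$, the manifold solutions $u_\ep,u_0$ correspond to the Euclidean solutions $\tilde u_\ep,\tilde u_0$ on $\phi(\omega)\Subset B'$, and the weak convergences $u_\ep\rightharpoonup u_0$ in $H^1(\omega)$ and $\mathbb L_\ep\nabla u_\ep\rightharpoonup\mathbb L_0\nabla u_0$ in $L^2(T\omega)$ follow from their Euclidean counterparts. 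The factor $\tfrac12$ is used only to guarantee that every admissible $\omega$ lies strictly inside $B'$, which gives the buffer needed to localize the classical correctors.

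The main obstacle, in my view, is to verify that $\mathbb L_0$ lies in $\mathcal M(\tfrac12B,\lambda,\Lambda)$ with the \emph{same} ellipticity constants, rather than with the strictly worse constants $\alpha,\beta$ that arise from the naive Euclidean reformulation. The key point is that the manifold ellipticity $g(\eta,\mathbb L_\ep\eta)\geq\lambda g(\eta,\eta)$ translates via $\xi=G\eta$ into the \emph{weighted} pointwise Euclidean inequality
\begin{equation*}
A_\ep(x)\xi\cdot\xi\geq\lambda\,\rho(x)\,\xi^{T}G(x)^{-1}\xi,
\end{equation*}
and this weighted inequality, with weight independent of $\ep$, passes to the $H$-limit. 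For this I would use the standard div-curl argument: on correctors $u_\ep^{\zeta}$ for each constant direction $\zeta\in\R^n$, $A_\ep\nabla u_\ep^{\zeta}\cdot\nabla u_\ep^{\zeta}\to A_0\nabla u_0^{\zeta}\cdot\nabla u_0^{\zeta}$ in $\mathcal D'(B')$, while the convex lower semicontinuity of the quadratic form $u\mapsto\int\psi\,\lambda\rho\,\xi^{T}G^{-1}\xi$ (with test weight $\psi\geq0$ and $\xi=\nabla u$) under weak $H^1$-convergence preserves the weighted bound. Transforming back gives $g(\eta,\mathbb L_0\eta)\geq\lambda g(\eta,\eta)$, and the same argument applied to the adjoint equation (i.e.\ to $\mathbb L_\ep^{*}$) produces the companion bound on $\mathbb L_0^{-1}$, completing the proof.
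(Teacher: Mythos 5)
Your proposal is correct in outline but takes a genuinely different route from the paper. You reduce the statement to the classical Euclidean Murat--Tartar compactness theorem: pass to normal coordinates on $B$ (using the injectivity-radius hypothesis only to obtain the chart), transport $\mathbb L_\ep$ to a matrix field $A_\ep=\rho L_\ep G^{-1}$ with $\ep$-independent (though a priori worse) Euclidean ellipticity constants, extract a Euclidean $H$-limit $A_0$, pull back, and then recover the sharp constants $\lambda,\Lambda$ by passing the $\ep$-independent \emph{weighted} pointwise inequalities to the limit via the Div--Curl lemma and weak lower semicontinuity. The chart equivalence you rely on is exactly the content of Lemma~\ref{local-chart}, which is proved independently of Proposition~\ref{P1}, so there is no circularity, and quoting the flat theory of \cite{MT} wholesale is legitimate. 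The paper instead argues intrinsically on the manifold: the injectivity-radius hypothesis enters through Remark~\ref{R:vs} (functions $v_1,\dots,v_n\in C^\infty_c(B)$ whose gradients span $T(\tfrac12B)$ --- this is where the factor $\tfrac12$ actually lives, whereas in your version it plays essentially no role), the adjoint resolvents $(\cL_\ep^*)^{-1}$ are compactified in the weak operator topology (Lemma~\ref{lem3}), oscillating test functions $v_{k,\ep}=(\cL_\ep^*)^{-1}\cL_0^*v_k$ are built, $\mathbb L_0^*$ is defined through the weak limits of the fluxes $\mathbb L_\ep^*\nabla v_{k,\ep}$, and the localized limit operators are identified with the Div--Curl lemma; the ellipticity of $\mathbb L_0$ is then obtained by the same lower-semicontinuity arguments you invoke. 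What the paper's intrinsic construction buys is that the identical machinery (spanning gradients, oscillating test functions, Div--Curl) is reused verbatim in Lemma~\ref{L:local}, Lemma~\ref{lemB} and Theorem~\ref{T1}; your shortcut is shorter here but would not supply those ingredients.

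One correction to your final step: applying the lower-bound argument ``to the adjoint equation'' does not produce the companion bound, since $H$-convergence of $(\mathbb L_\ep^*)$ to $\mathbb L_0^*$ only reproduces the same lower ellipticity estimate ($g(\xi,\mathbb L_0^*\xi)=g(\mathbb L_0\xi,\xi)$). The standard step --- and the one the paper takes --- is to apply the Div--Curl/lower-semicontinuity argument to the \emph{flux}: use \eqref{ellipt2} in the equivalent form
\begin{equation*}
  g(\mathbb L_\ep\nabla u_\ep,\nabla u_\ep)\geq\tfrac{1}{\Lambda}\,|\mathbb L_\ep\nabla u_\ep|^2
\end{equation*}
(suitably weighted in your coordinates), combine the convergence of $\int\varphi\,(\mathbb L_\ep\nabla u_\ep,\nabla u_\ep)$ with the weak $L^2$-convergence $\mathbb L_\ep\nabla u_\ep\rightharpoonup\mathbb L_0\nabla u_0$ and lower semicontinuity of the weighted quadratic functional of the flux, and then substitute $\xi=\mathbb L_0^{-1}\xi'$ to obtain the bound on $\mathbb L_0^{-1}$. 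With this replacement your argument is complete.
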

To lift Proposition~\ref{P1} from small balls to the whole manifold we cover $M$ by a countable collection of sufficiently small balls and pass to a diagonal sequence that features $H$-convergence on each of these balls. In order to guarantee that the $H$-limits associated with these balls are identical on the intersections of the balls, we appeal to the following lemma, which in particular establishes the uniqueness and locality property of $H$-convergence:
\begin{lemma}[Uniqueness, locality, invariance w.r.t.~transposition]\label{L:local}
Let $\Omega\subset M$ be open and consider a sequences $(\mathbb{L}_{\ep})\subset\mathcal{M}(\Omega,\lambda,\Lambda)$ that $H$-converges to some $\mathbb L_0$ in $\Omega$.
\begin{enumerate}[(a)]
\item Let $(\widetilde{\mathbb{L}}_{\ep})\subset\mathcal{M}(\Omega,\lambda,\Lambda)$ denote another sequence that $H$-converges to some $\widetilde{\mathbb L}_0$ in $\Omega$. Suppose that for some open $\omega\Subset\Omega$ we have $\mathbb{L}_{\ep}=\widetilde{\mathbb{L}}_{\ep}$ in $\omega$ for all $\ep$. Then $\mathbb{L}_0=\widetilde{\mathbb{L}}_0$ $\mu$-a.e.~in $\omega$.
\item Consider the coefficient field  $\mathbb L_\ep^*$ defined by the identity
  \begin{equation*}
    g(\mathbb L_\ep^*\xi,\eta)=    g(\xi,\mathbb L_\ep\eta)\qquad\text{for all }\xi,\eta\in T\Omega,
  \end{equation*}
  i.e., the adjoint of $\mathbb L_\ep$. Then $(\mathbb L_\ep^*)$ $H$-converges in $\Omega$ to $\mathbb L_0^*$ (the adjoint of $\mathbb L_0$).
\end{enumerate}
\end{lemma}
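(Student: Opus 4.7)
Both parts follow the Murat--Tartar strategy: combine the Lax--Milgram unique solvability provided by the condition $m_0<0$ (which lets us realize arbitrary smooth functions as $H$-limits) with Rellich compactness, which is what makes weak-weak products identifiable whenever one of the factors can be linked to a fixed distribution via an integration by parts.

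\emph{Part (a).} The claim is local, so it suffices to fix an arbitrary open ball $B\Subset\omega$ small enough that $m_0(B)<0$ (guaranteed by Poincar\'e on sufficiently small balls, cf.~Remark~\ref{R:m0}) and prove $\mathbb L_0=\widetilde{\mathbb L}_0$ $\mu$-a.e.~on $B$. For any $f\in H^{-1}(B)$, the solution $u_{\ep}\in H^1_0(B)$ of $-\Div(\mathbb L_{\ep}\nabla u_{\ep})=f$ is, by hypothesis, \emph{identically} the solution of $-\Div(\widetilde{\mathbb L}_{\ep}\nabla u_{\ep})=f$. Applying the definition of $H$-convergence to both sequences and invoking uniqueness of weak limits yields a common $u_0\in H^1_0(B)$ with $\mathbb L_0\nabla u_0=\widetilde{\mathbb L}_0\nabla u_0$ $\mu$-a.e.~on $B$. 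Every $u_0\in C^\infty_c(B)$ arises this way by taking $f:=-\Div(\mathbb L_0\nabla u_0)$ and appealing to Lax--Milgram uniqueness; choosing $u_0(x)=\varphi(x)\,y^i(x)$ in local coordinates $y^i$ about any point of $B$, with $\varphi$ a cut-off equal to $1$ on a precompact neighborhood, makes $\nabla u_0$ coincide with $\partial_{y^i}$ on $\{\varphi=1\}$. Varying $i$ forces $\mathbb L_0=\widetilde{\mathbb L}_0$ $\mu$-a.e.~on $\{\varphi=1\}$, and exhausting $B$ by such sets completes~(a).

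\emph{Part (b).} Fix $\omega\Subset\Omega$ with $m_0(\omega)<0$ and $f\in H^{-1}(\omega)$, and let $v_{\ep}\in H^1_0(\omega)$ solve $-\Div(\mathbb L_{\ep}^*\nabla v_{\ep})=f$. Since the ellipticity bounds transfer from $\mathbb L_{\ep}$ to $\mathbb L_{\ep}^*$, the a priori estimate \eqref{0} makes $(v_{\ep})$ bounded in $H^1_0(\omega)$ and $(\mathbb L_{\ep}^*\nabla v_{\ep})$ bounded in $L^2(T\omega)$, so along a subsequence $v_{\ep}\wto v_0$ and $\mathbb L_{\ep}^*\nabla v_{\ep}\wto \sigma_0$, with $-\Div\sigma_0=f$ in $H^{-1}(\omega)$. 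By Lax--Milgram uniqueness applied to $-\Div(\mathbb L_0^*\nabla v_0)=f$, it is enough to prove $\sigma_0=\mathbb L_0^*\nabla v_0$ $\mu$-a.e.~on $\omega$, since then the limit point is unique and the entire sequence converges. To identify $\sigma_0$, fix $u_0\in C^\infty_c(\omega)$ and let $u_{\ep}\in H^1_0(\omega)$ solve $-\Div(\mathbb L_{\ep}\nabla u_{\ep})=-\Div(\mathbb L_0\nabla u_0)$; by the hypothesis $\mathbb L_{\ep}\Hto\mathbb L_0$ on $\omega$, $u_{\ep}\wto u_0$ in $H^1_0$ and $\mathbb L_{\ep}\nabla u_{\ep}\wto\mathbb L_0\nabla u_0$ in $L^2$. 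For any $\varphi\in C^\infty_c(\omega)$, testing the $u_{\ep}$-PDE against $v_{\ep}\varphi$ and the $v_{\ep}$-PDE against $u_{\ep}\varphi$, and using the pointwise symmetry $g(\mathbb L_{\ep}\nabla u_{\ep},\nabla v_{\ep})=g(\nabla u_{\ep},\mathbb L_{\ep}^*\nabla v_{\ep})$, yields the identity
\[
\langle -\Div(\mathbb L_0\nabla u_0),\,v_{\ep}\varphi\rangle-\int_\omega v_{\ep}\,g(\mathbb L_{\ep}\nabla u_{\ep},\nabla\varphi)\dm=\langle f,u_{\ep}\varphi\rangle-\int_\omega u_{\ep}\,g(\mathbb L_{\ep}^*\nabla v_{\ep},\nabla\varphi)\dm.
\]
Rellich compactness $H^1_0(\omega)\hookrightarrow L^2(\omega)$ makes $u_{\ep},v_{\ep}$ strongly $L^2$-convergent, and combined with the weak $L^2$-convergences of $\mathbb L_{\ep}\nabla u_{\ep}$ and $\mathbb L_{\ep}^*\nabla v_{\ep}$ this lets me pass to the limit and then rewrite via the limit PDEs $-\Div(\mathbb L_0\nabla u_0)=-\Div(\mathbb L_0\nabla u_0)$ and $-\Div\sigma_0=f$ to obtain
\[
\int_\omega\varphi\,g(\mathbb L_0\nabla u_0,\nabla v_0)\dm=\int_\omega\varphi\,g(\nabla u_0,\sigma_0)\dm\qquad\text{for every }\varphi\in C^\infty_c(\omega).
\]
Varying $\varphi$ gives $g(\nabla u_0,\mathbb L_0^*\nabla v_0-\sigma_0)=0$ $\mu$-a.e., and varying $u_0$ as in (a) makes $\nabla u_0$ span $T_xM$ pointwise, forcing $\sigma_0=\mathbb L_0^*\nabla v_0$ $\mu$-a.e.~on~$\omega$.

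The main obstacle is the passage to the limit in the bilinear quantity $\int\varphi\,g(\mathbb L_{\ep}\nabla u_{\ep},\nabla v_{\ep})\dm$, where only weak $L^2$-convergences are available for both factors. The integration-by-parts above is a manifold version of the div-curl lemma: it converts the problematic weak-weak product into pairings involving a \emph{fixed} functional against strongly $L^2$-convergent sequences, using Rellich compactness on the relatively-compact $\omega$, which in turn relies on the smooth positive density of $\mu$.
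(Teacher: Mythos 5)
Your argument is correct and follows essentially the same route as the paper: in (a) you realize test functions whose gradients span the tangent space as limits of solutions with data $-\Div(\mathbb L_0\nabla u_0)$ (the paper packages this spanning family in Remark~\ref{R:vs}), and in (b) you use the oscillating test functions $u_\ep=\mathcal L_\ep^{-1}\mathcal L_0 u_0$ and a cutoff integration by parts that is exactly the Div-Curl Lemma (Lemma~\ref{lemC}) which the paper invokes in its Substeps~2.1 and~2.2. The only cosmetic differences are that you unfold the div-curl argument by hand and identify the limit function and the flux in a single computation (and note that $\nabla(\varphi y^i)$ equals $\nabla y^i$, not $\partial_{y^i}$, on $\{\varphi=1\}$, which does not affect the spanning argument).
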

Finally, to prove that $H$-convergence on the individual balls yields $H$-convergence on the entire manifold, and in order to treat the varying right-hand sides in part (b) of Theorem~\ref{T1}, we apply the following lemma.
\begin{lemma}\label{lemB}
Let $\Omega\subset M$ be open and $\mathbb{L}_{\ep}\Hto\mathbb{L}_0$ in $\Omega$. Let $\omega\Subset\Omega$ with $m_0(\omega)<0$. Then for every $f_{\ep},f_0\in L^2(\omega)$ and $G_{\ep},F_{\ep},G_0,F_0\in L^2(T\omega)$ with
\begin{equation*}
	\begin{cases}
		f_{\ep}\rightharpoonup f_0&\quad\text{weakly in }L^2(\omega),\\
		G_{\ep}\to G_0&\quad\text{in }L^2(T\omega),\\
		F_{\ep}\to F_0&\quad\text{in }L^2(T\omega),
	\end{cases}
\end{equation*}
the unique solutions $u_{\ep},u_0\in H^1_0(\omega)$ to 
\begin{equation*}
	\begin{aligned}
		&-\Div(\mathbb L_\ep \nabla u_{\ep})=f_{\ep}-\Div(\mathbb{L}_{\ep}G_{\ep})-\Div F_{\ep}&&\quad\text{in }H^{-1}(\omega),\\
		&-\Div(\mathbb L_0 \nabla u_0)=f_0-\Div(\mathbb{L}_0G_0)-\Div F_{\ep}&&\quad\text{in }H^{-1}(\omega)
	\end{aligned}
\end{equation*}
satisfy
\begin{equation*}
	\begin{cases}
		u_{\ep}\rightharpoonup u_0&\quad\text{weakly in }H^1_0(\omega),\\
		\mathbb{L}_{\ep}\nabla u_{\ep}\rightharpoonup\mathbb{L}_0\nabla u_0&\quad\text{weakly in }L^2(T\omega).
	\end{cases}
\end{equation*}
\end{lemma}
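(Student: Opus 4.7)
The strategy is to combine an energy estimate with a reduction to the definition of $H$-convergence via the Murat-Tartar oscillating test function method, using the transposition invariance from Lemma~\ref{L:local}(b) to manufacture suitable test functions.

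Since $m_0(\omega)<0$, Poincar\'e's inequality is available on $H^1_0(\omega)$. Testing the equation for $u_\ep$ against $u_\ep$, using the ellipticity lower bound $\lambda$ and the uniform $L^2$-bounds on $(f_\ep),(G_\ep),(F_\ep)$, I obtain $\|u_\ep\|_{H^1_0(\omega)}\le C$ and hence $\|\mathbb{L}_\ep\nabla u_\ep\|_{L^2(T\omega)}\le\Lambda C$. Up to extracting a subsequence, $u_\ep\wto u_*$ in $H^1_0(\omega)$, $\mathbb{L}_\ep\nabla u_\ep\wto p_*$ in $L^2(T\omega)$, and $u_\ep\to u_*$ strongly in $L^2(\omega)$ by Rellich-Kondrachov. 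The plan is to identify $u_*=u_0$ and $p_*=\mathbb{L}_0\nabla u_0$; uniqueness of the limit equation (again thanks to $m_0(\omega)<0$) then forces the full sequence to converge.

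The right-hand side splits into pieces with different convergence modes. By duality of the Rellich embedding, $f_\ep\to f_0$ strongly in $H^{-1}(\omega)$. Since $F_\ep\to F_0$ strongly in $L^2(T\omega)$, $\Div F_\ep\to\Div F_0$ strongly in $H^{-1}(\omega)$. Writing $\mathbb{L}_\ep G_\ep=\mathbb{L}_\ep G_0+\mathbb{L}_\ep(G_\ep-G_0)$, the second summand tends to zero strongly in $L^2(T\omega)$ by the uniform operator bound $\|\mathbb{L}_\ep\|_{\mathrm{op}}\le\Lambda$; the first, $\mathbb{L}_\ep G_0$, is only weakly convergent, and its weak limit need not equal $\mathbb{L}_0 G_0$. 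This is the term requiring the oscillating test function argument.

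For the identification, I invoke Lemma~\ref{L:local}(b) to obtain $\mathbb{L}^*_\ep\Hto\mathbb{L}^*_0$ in $\omega$ and work on a subdomain $\omega'\Subset\omega$ with $m_0(\omega')<0$. For every $v\in H^1_0(\omega')$ smooth (in practice a smoothly cut-off local coordinate function whose gradient probes arbitrary directions in $T_xM$), I define $\psi_\ep^v\in H^1_0(\omega')$ as the unique solution of $-\Div(\mathbb{L}^*_\ep\nabla\psi_\ep^v)=-\Div(\mathbb{L}^*_0\nabla v)$, which is a \emph{fixed} element of $H^{-1}(\omega')$. The definition of $H$-convergence combined with uniqueness of the limit problem yields $\psi_\ep^v\wto v$ in $H^1_0(\omega')$ and $\mathbb{L}^*_\ep\nabla\psi_\ep^v\wto\mathbb{L}^*_0\nabla v$ in $L^2(T\omega')$. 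For $\phi\in C^\infty_c(\omega')$, I test the equation for $u_\ep$ against $\phi\psi_\ep^v$ and the equation for $\psi_\ep^v$ against $\phi u_\ep$; subtracting and exploiting the adjoint identity $g(\mathbb{L}_\ep\xi,\eta)=g(\xi,\mathbb{L}^*_\ep\eta)$ cancels the leading oscillating bilinear integrals, and the remaining cross terms pass to the limit by weak-strong pairings together with a div-curl-type structure (the fluxes have $H^{-1}$-compact divergences up to the tracked residual from $\Div(\mathbb{L}_\ep G_0)$, while $\nabla u_\ep$ and $\nabla\psi_\ep^v$ are curl-free). Varying $v$ and $\phi$ identifies the limit equation satisfied by $u_*$; uniqueness gives $u_*=u_0$, and the same computation delivers the flux identification.

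The principal obstacle is implementing this oscillating test function construction on a manifold, because the tangent spaces $T_xM$ vary with $x$ and the classical Euclidean ansatz (perturbations of a linear function) is not globally meaningful: the construction must be carried out locally in coordinate charts with cutoffs compatible with $(g,\mu)$. A secondary subtlety is that $\mathbb{L}_\ep G_0$ does not in general weakly converge to $\mathbb{L}_0 G_0$; the compensating cancellations in the div-curl argument are precisely what yield the claimed identification of the weak limits of both $u_\ep$ and $\mathbb{L}_\ep\nabla u_\ep$.
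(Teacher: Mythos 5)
Your overall route—energy estimate, extraction of weak limits, identification via adjoint oscillating test functions supplied by Lemma~\ref{L:local}(b), and div-curl pairings—is the same as the paper's, but your implementation is more complicated than necessary and in one place incomplete. The paper tests the equation for $u_\ep$ directly with $v_\ep:=(\cL_\ep^*)^{-1}\cL_0^*v_0\in H^1_0(\omega)$ for an \emph{arbitrary} $v_0\in H^1_0(\omega)$, with no subdomain $\omega'$ and no cut-off $\phi$: the troublesome term becomes $\int_\omega(G_\ep,\mathbb{L}_\ep^*\nabla v_\ep)$, a strong--weak pairing, and $\int_\omega(\nabla u_\ep,\mathbb{L}_\ep^*\nabla v_\ep)$ passes to the limit by the second part of Lemma~\ref{lemC} because $\Div(\mathbb{L}_\ep^*\nabla v_\ep)$ is fixed. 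No chart-dependent construction is needed at this stage; the only point where spanning gradients enter is the flux identification, and there Remark~\ref{R:vs} provides intrinsic functions $v_1,\dots,v_n\in C^\infty_c(B)$, so the ``manifold obstacle'' you single out is not actually an issue for this lemma. Moreover, your localized subtract-and-cancel scheme produces, after expanding $\nabla(\phi\psi_\ep^v)$, the term $\int\psi_\ep^v\,(\mathbb{L}_\ep G_\ep,\nabla\phi)$, whose limit involves the \emph{unidentified} weak limit $q_*$ of $\mathbb{L}_\ep G_\ep$; it is not removed by any weak--strong or div-curl pairing, and to eliminate it you must in addition pass to the limit in the equation itself in the distributional sense and combine the two resulting identities. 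Your plan does not spell this out, although it can be supplied.

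The genuine gap is the flux identification. Writing $p_*$ and $q_*$ for weak $L^2(T\omega)$-limits of $\mathbb{L}_\ep\nabla u_\ep$ and $\mathbb{L}_\ep G_\ep$, the div-curl structure is carried by $\mathbb{L}_\ep(\nabla u_\ep-G_\ep)-F_\ep$, whose divergence equals $-f_\ep$ and is therefore compact in $H^{-1}(\omega)$; the pairings you describe then yield $p_*-q_*=\mathbb{L}_0(\nabla u_0-G_0)$ and nothing more. Since, as you yourself observe, $q_*\neq\mathbb{L}_0G_0$ in general, ``the same computation'' does \emph{not} deliver $p_*=\mathbb{L}_0\nabla u_0$, and no computation can: for a laminate $\mathbb{L}_\ep=a(x^1/\ep)\,\mathrm{Id}$ with $f_\ep=0$, $F_\ep=0$ and $G_\ep=e_1$ fixed, one has $q_*=\bar a\,e_1$ (arithmetic mean) while $\mathbb{L}_0 e_1$ is the harmonic mean times $e_1$, so $p_*=\mathbb{L}_0\nabla u_0+(q_*-\mathbb{L}_0G_0)\neq\mathbb{L}_0\nabla u_0$ on a set of positive measure. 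What your argument genuinely proves—and what is actually needed later in the proof of Theorem~\ref{T1}, where the $G$-contributions cancel upon summation over the partition of unity—is the convergence $\mathbb{L}_\ep(\nabla u_\ep-G_\ep)\wto\mathbb{L}_0(\nabla u_0-G_0)$ weakly in $L^2(T\omega)$. You should establish (and state) the flux conclusion in that combined form rather than claim the unqualified convergence of $\mathbb{L}_\ep\nabla u_\ep$.
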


\bigskip

\paragraph{\bf Mosco-convergence and convergence of the spectrum}
If we restrict to the symmetric case, i.e.\ $\mathbb L_\ep$ satisfies
\begin{equation*}
  g(\mathbb L_{\ep}\xi,\eta)=  g(\xi,\mathbb L_{\ep}\eta)\qquad\text{for all }\xi,\eta\in TM,
\end{equation*}
the solutions to \eqref{eq.problems} can be characterized as the unique
minimizers in $H^1_0(\Omega)$ to the strictly convex and coercive functional
\begin{equation*}
  H^1_0(\Omega)\ni u\mapsto \mathcal E_{m,\ep}(u)-\int_M f_\ep u+g(F_\ep,\nabla u)\,\dm,
\end{equation*}
where
\begin{equation*}
  \mathcal E_{m,\ep}(u):=\tfrac{1}{2}\int_\Omega m|u|^2+g(\mathbb L_\ep \nabla u,\nabla u)\,\dm.
\end{equation*}
In this symmetric situation we can appeal to variational notions of convergence, 
in particular   $\Gamma$-convergence and Mosco-convergence. 
The latter is extensively used to study the convergence properties of the associated evolution 
(i.e.~the semigroup generated by $-\Div(\mathbb L_{\ep}\nabla)$), e.g.~see 
\cite{KuwaeShioya2003,KuwaeUemura1997,Kolesnikov2008,Masamune2011,Lobus2015}. See a work by Hino (\cite{Hino1998}) for a non-symmetric generalization of Mosco-convergence.
A simple argument (that we outline for the reader's convenience---together with the definition of Mosco-convergence---in the appendix) 
shows that $H$-convergence implies Mosco-convergence (resp.\ Resolvent convergence):
\begin{lemma}[$H$-convergence implies Mosco-convergence]\label{L:HtoMo}
  Let $\mathbb L_{\ep}\in\mathcal M(M,\lambda,\Lambda)$ be symmetric. 
  Suppose $\mathbb L_{\ep}\Hto\mathbb L_0$, then the functional $\cEe\colon L^2(M)\to\R\cup\{+\infty\}$, 
  \[\cEe (u) = 
  \begin{cases}
    \int_M (\bLe \nabla u,\nabla u)\dm &\quad u \in H^1_0(M), \\
    \infty &\quad\mbox{otherwise}
  \end{cases}
  \]
  Mosco-converges to $\cE_0\colon L^2(M)\to\R\cup\{+\infty\}$, 
  \[\cE_0 (u) = 
  \begin{cases}
    \int_M (\bL_0 \nabla u,\nabla u)\dm &\quad u \in H^1_0(M), \\
    \infty & \quad\mbox{otherwise}.
  \end{cases}
  \]
\end{lemma}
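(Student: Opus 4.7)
The plan is to verify the two defining conditions of Mosco-convergence for the quadratic forms $\cEe,\cE_0$ on $L^2(M)$: the \emph{liminf} inequality $\cE_0(u)\le \liminf_{\ep\to 0}\cEe(u_\ep)$ whenever $u_\ep\rightharpoonup u$ weakly in $L^2(M)$, and the existence of a \emph{recovery sequence} $u_\ep\to u$ strongly in $L^2(M)$ with $\limsup_{\ep\to 0}\cEe(u_\ep)\le \cE_0(u)$ for every $u\in L^2(M)$. The recovery sequence follows almost directly from the $H$-convergence hypothesis, while the liminf inequality relies on a Tartar-style quadratic test combined with a compensated-compactness identity.

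\textbf{Recovery sequence.} If $u\notin H^1_0(M)$, set $u_\ep\equiv u$. For $u\in C_c^\infty(M)$, pick a relatively compact open set $\omega\Subset M$ with $\supp u\subset\omega$ and $m_0(\omega)<0$ and let $w_\ep\in H^1_0(\omega)$ solve
\begin{equation*}
  -\Div(\bLe \nabla w_\ep)=-\Div(\bL_0 \nabla u)\quad\text{in }H^{-1}(\omega).
\end{equation*}
Since $u$ itself lies in $H^1_0(\omega)$, uniqueness for the $\bL_0$-problem together with the $H$-convergence $\bLe\Hto\bL_0$ yields $w_\ep\rightharpoonup u$ weakly in $H^1_0(\omega)$, $\bLe\nabla w_\ep\rightharpoonup \bL_0\nabla u$ weakly in $L^2(T\omega)$, and (by the compact embedding $H^1_0(\omega)\hookrightarrow L^2(\omega)$) $w_\ep\to u$ strongly in $L^2(\omega)$. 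Testing the PDE with $w_\ep$ itself gives
\begin{equation*}
  \cEe(w_\ep)=\int_\omega(\bL_0\nabla u,\nabla w_\ep)\dm\longrightarrow \int_\omega(\bL_0\nabla u,\nabla u)\dm=\cE_0(u),
\end{equation*}
so extension by zero to $M$ produces the required recovery sequence. For a general $u\in H^1_0(M)$ I would first approximate $u$ by $u^k\in C_c^\infty(M)$ in $H^1(M)$ and then diagonalize in the standard Attouch fashion.

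\textbf{Liminf inequality.} Let $u_\ep\rightharpoonup u$ in $L^2(M)$ and assume, after passing to a subsequence, that $\cEe(u_\ep)\to L<\infty$. Uniform ellipticity upgrades this to $u_\ep\rightharpoonup u$ weakly in $H^1_0(M)$. Fix $\phi\in C_c^\infty(M)$ with $0\le\phi\le 1$ and $v\in C_c^\infty(M)$, choose $\omega\Subset M$ with $\supp\phi\cup\supp v\subset\omega$ and $m_0(\omega)<0$, and let $w_\ep^v\in H^1_0(\omega)$ be the recovery sequence attached to $v$ as above. Using the symmetry and non-negativity of $\bLe$,
\begin{equation*}
  0\le \int\phi\,(\bLe\nabla(u_\ep-w_\ep^v),\nabla(u_\ep-w_\ep^v))\dm,
\end{equation*}
which rearranges to
\begin{equation*}
  \int\phi(\bLe\nabla u_\ep,\nabla u_\ep)\dm\ge 2\int\phi(\bLe\nabla u_\ep,\nabla w_\ep^v)\dm-\int\phi(\bLe\nabla w_\ep^v,\nabla w_\ep^v)\dm.
\end{equation*}
Testing the PDE for $w_\ep^v$ against $\phi w_\ep^v$ and $\phi u_\ep$ (both in $H^1_0(\omega)$ because $\supp\phi\subset\omega$) and passing to the limit---each cross term being a strong-weak $L^2$ pairing thanks to Rellich ($w_\ep^v\to v$ and $u_\ep\to u$ strongly in $L^2(\omega)$)---I expect
\begin{equation*}
  \int\phi(\bLe\nabla w_\ep^v,\nabla w_\ep^v)\dm\to\int\phi(\bL_0\nabla v,\nabla v)\dm,\quad \int\phi(\bLe\nabla u_\ep,\nabla w_\ep^v)\dm\to\int\phi(\bL_0\nabla v,\nabla u)\dm.
\end{equation*}
Inserting these limits and completing the square gives
\begin{equation*}
  \liminf_{\ep}\int\phi(\bLe\nabla u_\ep,\nabla u_\ep)\dm\ge\int\phi(\bL_0\nabla u,\nabla u)\dm-\int\phi(\bL_0\nabla(u-v),\nabla(u-v))\dm.
\end{equation*}
Picking $v=v_k\in C_c^\infty(M)$ with $v_k\to u$ in $H^1(\supp\phi)$ (cutoff a mollification) drives the defect term to zero, so since $\cEe(u_\ep)\ge\int\phi(\bLe\nabla u_\ep,\nabla u_\ep)\dm$ the liminf bound localized to $\phi$ follows, and monotone convergence as $\phi\nearrow 1$ yields $L\ge\cE_0(u)$.

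\textbf{Main difficulty.} The subtle step is the identification of the mixed limit $\int\phi(\bLe\nabla u_\ep,\nabla w_\ep^v)\dm\to\int\phi(\bL_0\nabla v,\nabla u)\dm$, a product of two sequences that are merely weakly convergent in $L^2$. This is the compensated-compactness (``div--curl'') mechanism underlying Tartar's method; I realize it here by testing the $\bLe$-equation for $w_\ep^v$ against the localized test function $\phi u_\ep\in H^1_0(\omega)$, which simultaneously overcomes the absence of Dirichlet data on $u_\ep$ at $\pa\omega$ and reduces every residual term to a genuine strong-weak $L^2$ pairing.
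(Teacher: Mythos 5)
Your proposal is correct in its overall architecture, but it takes a genuinely different route from the paper. The paper's proof is two lines: by Mosco's theorem (Lemma~\ref{lemma90309}), Mosco-convergence of $(\cEe)$ is equivalent to strong $L^2$-convergence of the resolvents $G^\lambda_\ep=(\lambda+\cLe)^{-1}$, and that convergence is precisely Theorem~\ref{T1}(b) applied on $\Omega=M$ with $m=\lambda>0$ and strongly convergent right-hand sides. You instead verify the two defining conditions of Mosco-convergence directly, building oscillating test functions from the definition of $H$-convergence and passing to the limit with the Div-Curl Lemma (Lemma~\ref{lemC}); in effect you re-run, by hand, the localization and energy-convergence mechanism that the paper has already packaged into Theorem~\ref{T1} and Lemma~\ref{lemB}. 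Both routes are legitimate: the paper's buys brevity by outsourcing to Mosco's equivalence and the compactness theorem, yours makes the compensated-compactness mechanism explicit and avoids the resolvent characterization. One small point you should make explicit: completing the square with $\bL_0$ uses the symmetry of $\bL_0$, which is not assumed but follows from Lemma~\ref{L:local}(b) (transposition invariance) together with uniqueness of the $H$-limit.

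There is, however, one unaddressed case in your construction. Both the recovery sequence and the liminf step require, for a given compactly supported $u$ (resp.\ $\phi$ and the approximants $v_k$), an open set $\omega\Subset M$ with $m_0(\omega)<0$ containing the support. If $M$ is compact (which is allowed: $M$ is only assumed connected and without boundary), then $H^1_0(M)=H^1(M)$ and $C^\infty_c(M)=C^\infty(M)$ contains functions whose support is all of $M$, while $m_0(M)<0$ fails (constants violate the Poincar\'e inequality \eqref{poincare}), so no admissible $\omega$ exists and the definition of $H$-convergence cannot be invoked as you do. Two standard repairs: either construct the oscillating test functions via Theorem~\ref{T1}(b) on $\Omega=M$ with a zero-order term $m>0$, i.e.\ solve $m w_\ep-\Div(\bLe\nabla w_\ep)=mv-\Div(\bL_0\nabla v)$ in $H^{-1}(M)$, which yields the same weak limits plus strong $L^2$-convergence of $(w_\ep)$ with no restriction on supports (this is essentially what the paper's resolvent argument does); or fix a point $p\in M$ and work in $M\setminus\{p\}$, using that a point has zero $H^1$-capacity for $n\ge2$, so that $C^\infty_c(M\setminus\{p\})$ is still dense in $H^1(M)$ and cut-offs $\phi\nearrow 1$ a.e.\ with support avoiding $p$ suffice. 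With either patch (and the symmetry remark above), your argument is complete; in the noncompact case it is fine as written, since any compact set is contained in some $\omega\Subset M$ whose complement in a slightly larger relatively compact connected set has positive measure, whence $m_0(\omega)<0$.
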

\begin{remark}
The notion of Mosco-convergence only directly yields strong convergence of 
$(u_\ep)$ in $L^2(M)$ (and weak convergence in $H^1(M)$). 
The notion of $H$-convergence is a bit stronger, since it also yields convergence
of the fluxes $\mathbb L_\ep \nabla u_\ep$. 
In contrast, Mosco-convergence
in conjunction with the Div-Curl Lemma, see Lemma~\ref{lemC} below, 
only yields convergence of the $L^2$-projection of $\mathbb L_{\ep}\nabla u_\ep$ onto the orthogonal complement of $\{\nabla\phi\,:\,\phi\in
H^1_0(M)\}\subset L^2(T\Omega)$.
\end{remark}
Another consequence of $H$-convergence is convergence of the spectrum. In the following we consider an open, relatively-compact subset $\Omega\Subset M$ and suppose that $m_0(\Omega)<0$, so that Poincar\'e's inequality \eqref{poincare} is available and the embedding $H^1_0(\Omega)\subset L^2(\Omega)$ is compact. Moreover, we consider a symmetric, uniformly elliptic coefficient field $\mathbb L_\ep\in \mathcal M(M,\lambda,\Lambda)$. Then the spectral theorem for compact, self-adjoint operators applied to the operator  $-\Div(\mathbb L_\ep\nabla):H^1_0(\Omega)\subset L^2(\Omega)\to L^2(\Omega)$ implies that $L^2(\Omega)$ decomposes into countably many, finite dimensional, orthogonal eigenspaces associated with strictly positive eigenvalues. The following statement shows that if $\mathbb L_\ep$ is $H$-convergent, then the eigenspaces and eigenvalues converge. The statement is a direct consequence of \cite[Lemma~11.3 and Theorem~11.5, see also Theorem~11.6]{Zhikov-book} combined with Theorem~\ref{T1}:
\begin{lemma}[$H$-convergence implies spectral convergence]\label{L:HtoSpec}
  Let $(\mathbb L_{\ep})$ be a sequence of symmetric coefficient fields in $\mathcal M(M,\lambda,\Lambda)$ and suppose that $\mathbb L_{\ep}\Hto\mathbb L_0$. 
  Consider an open, relatively-compact set $\Omega\subset M$ with $m_0(\Omega)<0$. For $\ep\geq 0$ we consider the unbounded operator
  \begin{equation*}
    -\Div(\mathbb{L}_{\varepsilon}\nabla):H^1_0(\Omega)\subset L^2(\Omega)\to L^2(\Omega),
  \end{equation*}
  and let
  \begin{equation*}
    0<\lambda_{\varepsilon,1}\le\lambda_{\varepsilon,2}\le\lambda_{\varepsilon,3}\leq \cdots
  \end{equation*}
  denote the list of increasingly ordered eigenvalues, where eigenvalues are repeated according to their multiplicity. Let $u_{\ep,1},u_{\ep,2},u_{\ep,3},\ldots$ denote associated eigenfunctions. Then for all $k\in\mathbb N$, 
	\begin{equation*}
		\lambda_{\varepsilon,k}\to\lambda_{0,k},
	\end{equation*}
        and if $s\in\mathbb{N}$ denotes the multiplicity of $\lambda_{0,k}$, i.e.
	\begin{equation*}
		\lambda_{0,k-1}<\lambda_{0,k}=\cdots=\lambda_{0,k+s-1}<\lambda_{0,k+s}\qquad\text{(with the convention $\lambda_{0,0}=0$)},
	\end{equation*}
	then there exists a sequence $\bar{u}_{\varepsilon,k}$ of linear combinations of $u_{\varepsilon,k},\ldots,u_{\varepsilon,k+s-1}$ such that
	\begin{equation*}
		\bar{u}_{\varepsilon,k}\to u_{0,k}\qquad\text{strongly in $L^2(\Omega)$.}
	\end{equation*}
\end{lemma}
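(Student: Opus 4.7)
The plan is to rephrase the eigenvalue problem in terms of the inverses of $-\Div(\mathbb{L}_\ep\nabla)$, to promote the $H$-convergence hypothesis into operator-norm convergence of these inverses on $L^2(\Omega)$, and then to invoke classical spectral perturbation theory for compact self-adjoint operators under norm-convergent perturbations.

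For each $\ep\geq 0$ I would define $T_\ep\colon L^2(\Omega)\to L^2(\Omega)$ by $T_\ep f:=u_\ep$, where $u_\ep\in H^1_0(\Omega)$ is the unique weak solution of $-\Div(\mathbb{L}_\ep\nabla u_\ep)=f$ in $H^{-1}(\Omega)$. Since $m_0(\Omega)<0$, the Poincar\'e inequality \eqref{poincare} holds, and the Lax--Milgram lemma furnishes well-posedness together with the uniform bound $\|T_\ep f\|_{H^1(\Omega)}\leq C\|f\|_{L^2(\Omega)}$ for a constant $C$ depending only on $\Omega$ and $\lambda$. Symmetry of $\mathbb{L}_\ep$ implies that $T_\ep$ is self-adjoint on $L^2(\Omega)$, while the combination of $\Omega\Subset M$ and $m_0(\Omega)<0$ yields compactness of the embedding $H^1_0(\Omega)\hookrightarrow L^2(\Omega)$, whence each $T_\ep$ is a compact, positive, self-adjoint operator. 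Its eigenvalues are the reciprocals $\mu_{\ep,k}=\lambda_{\ep,k}^{-1}$, with the same eigenfunctions $u_{\ep,k}$.

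Next I would upgrade $\mathbb{L}_\ep\Hto\mathbb{L}_0$ to operator-norm convergence $T_\ep\to T_0$ on $L^2(\Omega)$. For any sequence $(f_\ep)\subset L^2(\Omega)$ with $f_\ep\rightharpoonup f_0$ weakly in $L^2(\Omega)$, Theorem~\ref{T1}(b) with $m=0$ (admissible thanks to $m_0(\Omega)<0$) and $F_\ep\equiv 0$ yields $T_\ep f_\ep\to T_0 f_0$ strongly in $L^2(\Omega)$, the strong convergence being ensured by the compact embedding. A standard contradiction argument then upgrades this to operator-norm convergence: if $\|T_\ep-T_0\|$ did not tend to zero, one could extract normalized $f_\ep\in L^2(\Omega)$ with $\|(T_\ep-T_0)f_\ep\|_{L^2}\geq\delta$; passing to a weak limit $f_\ep\rightharpoonup f$ gives $T_\ep f_\ep\to T_0 f$ by the above, while $T_0 f_\ep\to T_0 f$ by compactness of $T_0$, producing a contradiction.

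Once $T_\ep\to T_0$ in operator norm with each $T_\ep$ compact and self-adjoint, the classical spectral perturbation theory for such operators (e.g.~\cite[Lemma~11.3 and Theorem~11.5]{Zhikov-book}) gives, for every $k\in\mathbb{N}$, the convergence $\mu_{\ep,k}\to\mu_{0,k}$ and hence $\lambda_{\ep,k}\to\lambda_{0,k}$. Moreover, for each eigenvalue $\mu_{0,k}$ of multiplicity $s$, the orthogonal spectral projectors $P_\ep$ onto $\mathrm{span}\{u_{\ep,k},\ldots,u_{\ep,k+s-1}\}$ converge in $L^2$-operator norm to the spectral projector $P_0$ onto the $s$-dimensional eigenspace associated with $\mu_{0,k}$; setting $\bar u_{\ep,k}:=P_\ep u_{0,k}$ then delivers the required linear combinations of $u_{\ep,k},\ldots,u_{\ep,k+s-1}$ that converge to $u_{0,k}$ strongly in $L^2(\Omega)$. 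The main obstacle is precisely the passage from pointwise resolvent convergence (already encoded in Theorem~\ref{T1}(b)) to operator-norm convergence, which is where the compact embedding $H^1_0(\Omega)\hookrightarrow L^2(\Omega)$ guaranteed by $m_0(\Omega)<0$ is essential; this is exactly the ingredient that goes beyond pure Mosco-convergence of the associated energy forms (Lemma~\ref{L:HtoMo}).
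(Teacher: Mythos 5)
Your proposal is correct, but it is organized differently from the paper. The paper gives essentially no argument of its own: it derives the lemma directly from the abstract spectral-convergence results of Zhikov--Kozlov--Oleinik (Lemma~11.3 and Theorem~11.5 of \cite{Zhikov-book}), whose hypotheses for a family of compact, self-adjoint, nonnegative operators are precisely that $T_\ep f_\ep\to T_0 f$ strongly in $L^2(\Omega)$ whenever $f_\ep\rightharpoonup f$ weakly --- and this is exactly what Theorem~\ref{T1}(b) (with $m=0$, $F_\ep\equiv0$, and the compact embedding $H^1_0(\Omega)\subset L^2(\Omega)$) supplies. You instead insert an intermediate step: you upgrade this ``weak-to-strong'' convergence of the solution operators to operator-norm convergence $\|T_\ep-T_0\|\to 0$ by the standard contradiction argument, and then invoke classical perturbation theory for compact self-adjoint operators (convergence of ordered eigenvalues and of Riesz spectral projections, with $\bar u_{\ep,k}:=P_\ep u_{0,k}$). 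In a fixed Hilbert space the two routes are equivalent --- your contradiction argument is in fact the proof that the Zhikov-type hypothesis implies norm convergence --- so nothing is lost; your version is more self-contained, while the paper's citation-based route extends more readily to the varying-space setting used later in Lemma~\ref{171017.cor}. One small point of hygiene: Theorem~\ref{T1}(b) is stated for the subsequence extracted in that theorem, so to apply it to an arbitrary sequence that is assumed to $H$-converge you should either invoke Lemma~\ref{lemB} (with $\omega=\Omega\Subset M$, $G_\ep=F_\ep=0$) together with the compact embedding, or note that uniqueness of the $H$-limit makes the convergence in (b) hold for the full sequence; this is a presentational fix, not a gap.
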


\subsection{Identification of the limit via local coordinate charts}\label{S:ident}
For a general sequence of coefficient fields $(\mathbb L_\ep)$ 
the $H$-limit $\mathbb L_0$ obtained by Theorem~\ref{T1} depends on the choice of the subsequence. 
In contrast, if the coefficient field features a special structure, 
then the $H$-limit is unique, the convergence holds for the entire sequence and one might even have a homogenization formula for $\mathbb L_0$. 
In the flat case $M=\R^n$ such results are classical. 
The simplest (non-trivial) example is periodic homogenization when $\mathbb L_\ep(x)=\mathbb L(\frac{x}{\ep})$ 
where $\mathbb L$ is periodic, i.e.~$\mathbb L(\cdot+k)=\mathbb L(\cdot)$ a.e.~in $\R^n$ for all $k\in\mathbb Z^n$; 
another example is stochastic homogenization, when $\mathbb L_{\ep}(x)=\mathbb L(\frac{x}{\ep})$ and $\mathbb L$ 
is sampled from a stationary and ergodic ensemble, 
see the seminal papers \cite{Papanicolaou1979} or \cite{neukamm-lecture-notes} 
for a self-contained introduction to periodic and stochastic homogenization. 
In the flat case these results rely on the fact that we can define an ergodic  group action on the manifold $M$. 
For general manifolds this is not possible. 
In this section we first make the simple observation that a coefficient field locally $H$-converges 
if and only if the coefficient field expressed in local coordinates $H$-converges, 
and secondly, obtain $H$-convergence and a homogenization formula for \textit{locally} periodic coefficient fields on general manifolds.
\medskip

For this purpose we fix $(\Om, \Psi; x^1,x^2,\dotsc,x^n)$ a local coordinate chart of $M$, a relatively-compact set $U\Subset\Psi(\Om) \subset \mathbb R^n$, and set $\omega:=\Psi^{-1}(U)\subset\Omega$. 
We will suppress $\Psi$ when the meaning is clear from the context. In particular, for the representation of a function $u$ on $\Omega$ in local coordinates we shall simply write $u$ instead of $u\circ\Psi^{-1}$.
We associate to $\mathbb L\in\mathcal M(\omega,\lambda,\Lambda)$ a density $\rho$ and a coefficient field $A\colon U\to\mathbb R^{n\times n}$ with components
\begin{equation}\label{def:A}
  A_{ij}:=\rho\,g(\mathbb{L}\nabla_gx^i,\nabla_gx^j)\quad\text{for all $i,j=1,\dotsc,n$},\qquad   \rho = \sigma \sqrt{\det g},
\end{equation}
where $\sigma$ is the density of $\mu$ against the Riemannian volume measure.
\begin{lemma}\label{190514.01}
  Let $\mathbb L\in\mathcal M(\omega,\lambda,\Lambda)$ and let $A:U\to\R^{n\times n}$ be defined by \eqref{def:A}. Then there exist $0<\lambda'\leq\Lambda'<\infty$ (only depending on $\Psi$, $U$, $\lambda$, and $\Lambda$) such that we have
  \begin{equation*}
    \forall \xi\in\mathbb R^n\,:\qquad A\xi\cdot \xi\geq \lambda'|\xi|^2\qquad\text{and}\qquad A^{-1}\xi\cdot \xi\geq\tfrac{1}{\Lambda'}|\xi|^2\qquad\text{a.e. in }U,
  \end{equation*}
  where $``\cdot "$ denotes the scalar product in $\mathbb R^n$.

\end{lemma}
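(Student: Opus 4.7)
The plan is to rewrite the Euclidean quadratic forms $A\xi\cdot\xi$ and $A^{-1}\xi\cdot\xi$ intrinsically as $g$-bilinear forms involving $\mathbb L$ and $\mathbb L^{-1}$ respectively, so that the assumed ellipticity bounds \eqref{ellipt1} and \eqref{ellipt2} become directly applicable. The only remaining step is then to compare the Euclidean inner product on $\R^n$ with the metric inner product read in the chart, which is harmless on the compact set $\overline U$ because $g$ is smooth and positive-definite and the density $\sigma$ is smooth and strictly positive on $M$.

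For the first inequality, to $\xi\in\R^n$ associate the linear coordinate function $u_\xi := \sum_i \xi^i x^i$ on $\omega$, so that $\partial_i u_\xi = \xi^i$ and $\nabla_g u_\xi$ has coordinate components $g^{ij}\xi^j$. A direct index computation from \eqref{def:A}, using the defining identity $g(V,\nabla_g u)=du(V)$ with $u=x^j$, produces
\[
A\xi\cdot\xi \;=\; \rho\, g\bigl(\mathbb L\nabla_g u_\xi,\,\nabla_g u_\xi\bigr),
\]
so \eqref{ellipt1} yields $A\xi\cdot\xi\geq \lambda\rho\,|\nabla_g u_\xi|^2 = \lambda\rho\, g^{ij}\xi^i\xi^j$. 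Analogously, an elementary matrix manipulation in coordinates (using that in the chart $A$ equals $\rho\, G^{-1}L^{T}$, whence $A^{-1}=\rho^{-1}(L^{-1})^{T}G$) yields
\[
A^{-1}\eta\cdot\eta \;=\; \rho^{-1}\, g\bigl(\mathbb L^{-1}W_\eta,\,W_\eta\bigr),
\]
where $W_\eta := \eta^i\partial_i$ is the tangent vector whose chart-components are the entries of $\eta\in\R^n$. Assumption \eqref{ellipt2} then gives $A^{-1}\eta\cdot\eta\geq \Lambda^{-1}\rho^{-1}\,|W_\eta|_g^2 = \Lambda^{-1}\rho^{-1}g_{ij}\eta^i\eta^j$.

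Since $\overline U$ is compact and $g$, $\sigma$ are smooth with $g$ positive-definite and $\sigma$ strictly positive on $M$, the metric matrix $G(x)=(g_{ij}(x))$ and its inverse $G^{-1}(x)=(g^{ij}(x))$, as well as $\rho(x)=\sigma(x)\sqrt{\det g(x)}$, are uniformly bounded above and below by positive constants on $\overline U$. Combining these uniform bounds with the two estimates of the previous paragraph yields constants $0<\lambda'\leq\Lambda'<\infty$, depending only on $\Psi$, $U$, $\lambda$, $\Lambda$, realising the claimed Euclidean ellipticity and upper bound. The proof is largely mechanical; the only delicate point is verifying that the index contraction in \eqref{def:A} truly corresponds to the two intrinsic bilinear forms above (and not to their transposes), which must be tracked carefully since $\mathbb L$ is not assumed to be symmetric.
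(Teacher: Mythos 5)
Your proof is correct, and its first half is essentially the paper's argument: for $\xi\in\R^n$ the vector $\nabla_g u_\xi=\sum_i\xi^i\nabla_g x^i$ is exactly the tangent vector that the paper associates with $\xi$ (there encoded through its lowered components $\overline\xi^i=g(\cdot,\tfrac{\partial}{\partial x^i})$), and both arguments reduce $A\xi\cdot\xi\ge\lambda'|\xi|^2$ to \eqref{ellipt1} combined with the uniform two-sided bounds on $(g_{ij})$, $(g^{ij})$ and $\rho$ over the compact set $\overline U$, which is precisely the paper's compactness step. Where you genuinely diverge is the second inequality: the paper never inverts $A$, but instead proves the bilinear bound $A\overline\xi\cdot\overline\eta\le C'|\overline\xi|\,|\overline\eta|$ (using that \eqref{ellipt1}--\eqref{ellipt2} give the operator bound $|\mathbb L\xi|\le\Lambda|\xi|$) and then concludes, leaving implicit the standard fact that a coercive and bounded matrix has a coercive inverse. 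You instead compute $A^{-1}=\rho^{-1}(L^{-1})^{\sf T}G$ in the chart (with $G=(g_{ij})$ and $L$ the coordinate matrix of the endomorphism $\mathbb L$) and identify $A^{-1}\eta\cdot\eta=\rho^{-1}\,g(\mathbb L^{-1}W_\eta,W_\eta)$, so that \eqref{ellipt2} applies verbatim; your index bookkeeping is right (the transposes are harmless for the quadratic forms even though $\mathbb L$ is not symmetric, since $g$ is symmetric as a bilinear form), and the a.e.\ invertibility of $A$ needed here follows from your first inequality (or directly from the invertibility of $\mathbb L$ implicit in \eqref{ellipt2}). The trade-off is minor: your route avoids the operator-norm estimate and the coercive-plus-bounded inversion argument that the paper compresses into ``thus the statement follows,'' at the price of an explicit matrix identity for $A^{-1}$; the paper's route avoids ever writing down $A^{-1}$.
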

Next we express the elliptic equation in local coordinates. 
For $f \in L^2({\omega})$ and $\xi \in L^2(T{\omega})$ let  $u \in H^1_0({\omega})$ be the  unique solution to
\[
  -\Div_{g,\mu}(\bbL \nabla_g u) = f -\Div_{g,\mu}\xi\qquad\text{in }H^{-1}({\omega}),
\]
that is
\[
  \int_{\omega} g(\bL \nabla_gu, \nabla_g\varphi)\dm = \int_{\omega} f\varphi\dm + \int_{\omega} g(\xi,\nabla_g\varphi)\dm\qquad \text{for all } \varphi \in  H^1_0({\omega}).
\]
Let $F \in L^2(TU)\cong L^2(U;\mathbb R^n)$ be the vector field
on $U$ with the components $F^i=dx^i(\xi)$. Then 
\begin{equation} \label{eq250109}
-\mbox{div} (A \nabla u) = \rho f -\mbox{div} (\rho F)\qquad\text{in }H^{-1}(U),
\end{equation}
that is, for any $\psi \in C^\infty_c(U)$
\[
\int_U A \nabla u \cdot \nabla \psi\, \mathrm d x = \int_U \rho f\psi\, \rd x + \int_U \rho F \cdot \nabla \psi\, \rd x,
\]
where $``\cdot"$ stands for the scalar product in $\mathbb R^n$. 

\medskip

With help of this transformation we can make the following observation:

\begin{lemma}\label{local-chart}
Let $\bL_\ep,\bL_0\in\mathcal M(\omega,\lambda,\Lambda)$ and denote by $A_\ep,A_0$ be defined by \eqref{def:A}. 
Then the following assertions are equivalent.
\begin{itemize}
\item[\rm(1)] $(\bL_\ep)$  $H$-converges to $\bL_0$ on $({\omega},g,\mu)$.
\item[\rm(2)] $(A_\ep)$  $H$-converges to $A_0$ on $U$ equipped with the standard Euclidean metric and measure.
\end{itemize}
\end{lemma}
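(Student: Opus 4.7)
The plan is to show that the coordinate chart $\Psi\colon \omega\to U$ induces a canonical correspondence between the PDE problem on $(\omega,g,\mu)$ and the transformed PDE \eqref{eq250109} on $U$ (equipped with Euclidean structure), and that under this correspondence the two notions of $H$-convergence agree.

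First I would set up the relevant bicontinuous isomorphisms. Since $U\Subset \Psi(\Omega)$ is relatively compact, the smooth metric $g$ and the density $\sigma$ are bounded above and below on $\overline U$. Combined with Lemma~\ref{190514.01}, the pullback $u\mapsto u\circ\Psi^{-1}$ then yields a topological isomorphism $H^1_0(\omega,g,\mu)\cong H^1_0(U)$ (Euclidean), and $\xi\mapsto F$ with $F^i=dx^i(\xi)$ yields $L^2(T\omega)\cong L^2(U;\R^n)$. Both isomorphisms preserve weak convergence, and $H^{-1}(\omega)$ corresponds to $H^{-1}(U)$ (up to the multiplicative factor $\rho$ which is bounded away from $0$ and $\infty$).

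Second I would verify the equivalence of the energy pairings. A direct local-coordinate computation using $\nabla_g x^i = g^{ij}\partial_{x^j}$ shows
\begin{equation*}
  \int_\omega g(\mathbb L \nabla_g u,\nabla_g v)\,\rd\mu = \int_U A\nabla u\cdot\nabla v\,\rd x\qquad\text{for all }u,v\in H^1_0(\omega),
\end{equation*}
and, setting $\eta=\mathbb L\nabla_g u$, the coordinates $\eta^i=dx^i(\eta)$ satisfy $\rho \eta^i = (A\nabla u)^i$. Consequently, a function $u\in H^1_0(\omega)$ solves $-\Div_{g,\mu}(\mathbb L\nabla_g u)=f-\Div_{g,\mu}\xi$ in $H^{-1}(\omega)$ if and only if it solves $-\Div(A\nabla u)=\rho f-\Div(\rho F)$ in $H^{-1}(U)$, cf.~\eqref{eq250109}. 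Since $\rho$ is $\ep$-independent and strictly positive, weak convergence of the fluxes $\mathbb L_\ep \nabla_g u_\ep$ in $L^2(T\omega)$ is equivalent to weak convergence of $A_\ep\nabla u_\ep$ in $L^2(U;\R^n)$; the same equivalence holds for $u_\ep$ in the respective $H^1_0$-spaces.

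Third I would match the families of test subsets used in the definition of $H$-convergence. Any relatively-compact $\omega'\Subset \omega$ corresponds via $\Psi$ to a relatively compact $V\Subset U$, and conversely; moreover, by the equivalence of norms, Poincar\'e's inequality \eqref{poincare} holds on $\omega'$ exactly when it holds on $V$, so the condition $m_0(\omega')<0$ transfers between the two sides. Any $f\in H^{-1}(\omega')$ corresponds to some element of $H^{-1}(V)$ and vice versa (e.g.~an arbitrary distribution on $V$ can be written as $\rho f-\Div(\rho F)$ for suitable $f,F$, since $\rho$ is smooth and positive). Combining these observations with the equivalence of the PDEs established above immediately yields the equivalence of (1) and (2).

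The only real subtlety is bookkeeping: one needs to track the weight $\rho=\sigma\sqrt{\det g}$ and the way the $(1,1)$-tensor $\mathbb L$ transforms under the chart, in particular the identity $\rho F=A\nabla u$ when $\xi=\mathbb L\nabla_g u$, so that weak convergence of the fluxes transfers in both directions. Once the isomorphisms and the pointwise coordinate identities are in place, the lemma follows by unwinding the definitions.
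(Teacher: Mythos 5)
Your proposal is correct and follows essentially the same route as the paper: both arguments rest on the transformation identity \eqref{eq250109} (equivalence of the weak formulations under the chart) together with the fact that the coordinate correspondence, involving the fixed weight $\rho$, is a bicontinuous isomorphism and hence transfers weak convergence of solutions and fluxes in both directions. Your third step (matching subdomains, the condition $m_0<0$, and general $H^{-1}$ right-hand sides) is a slightly more explicit bookkeeping of points the paper leaves implicit, but it is not a different method.
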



On the level of $A_\ep$ (which is defined on the ``flat'' open subset  $U\subset\R^n$), we can naturally consider periodic homogenization. In the following we denote by $Y:=[0,1)^n$ the reference cell of periodicity and by $H^1_{\#}(Y)$ the Hilbert-space of $Y$-periodic functions $\phi\in H^1(Y)$ with zero average, i.e.\ $\int_Y\phi=0$. We denote by $\mathcal{M}_{\text{per}}(\lambda,\Lambda)$ the class of $Y$-periodic coefficient fields $A\colon\mathbb{R}^n\times\mathbb{R}^n\to\mathbb{R}^{n\times n}$ with ellipticity constants $0<\lambda\le\Lambda<\infty$, that is
\begin{align}
	&A(\cdot,y)\text{ is continuous for a.e.~$y\in\mathbb{R}^n$,}\\
	&A(x,\cdot)\text{ is measurable and $Y$-periodic for each $x\in\mathbb{R}^n$,}\\
	&\begin{aligned}
		&A(x,y)\xi\cdot\xi\ge\lambda|\xi|^2\text{ and }A(x,y)^{-1}\xi\cdot\xi\ge\tfrac{1}{\Lambda}|\xi|^2
		\text{for each $x\in\mathbb{R}^n$, a.e.~$y\in\mathbb{R}^n$}\\&\text{and all $\xi\in\mathbb{R}^n$}.
	\end{aligned}
\end{align}
It is a classical result (see e.g.~\cite[Theorem~2.2]{allaire1992}) that for $A\in\mathcal{M}_{\text{per}}(\lambda,\Lambda)$ the sequence $A_{\varepsilon}(x):=A(x,\tfrac{x}{\varepsilon})$ $H$-converges to a homogenized coefficient field $A_{\hom}$ which is characterized as follows:
\begin{equation}\label{perhom}
  A_{\hom}(x)e_j=\int_Y A(x,y)(\nabla_y\phi_j(x,y)+e_j)\,\mathrm dy,
\end{equation}
where $(e_j)$ is the standard basis in $\mathbb{R}^n$, and $\phi_j(x,\cdot)\in H^1_{\#}(Y)$ denotes the unique weak solution to 
\begin{equation}\label{perhom.b}
  \int_YA(x,y)(\nabla_y\phi_j(x,y)+e_j)\cdot \nabla_y\psi(y)\,\mathrm dy=0\qquad\text{for all }\psi\in H^1_{\#}(Y).
\end{equation}
For our purpose we require a small variant of this classical result which includes an additional shift in the definition of $A_{\varepsilon}$:
\begin{lemma}\label{ex:appendix}
  Let $A\in\mathcal{M}_{\mathrm{per}}(\lambda,\Lambda)$ and $r\in\R$. The sequence $A_{\eps}(x):=A(x,\frac{x+r}{\eps})$ $H$-converges on $\R^n$ to $A_{\hom}$ as defined in \eqref{perhom}.
\end{lemma}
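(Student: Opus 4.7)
The plan is to reduce the claim to the classical periodic homogenization result behind formulas \eqref{perhom}--\eqref{perhom.b} via a change of variables $y=x+r$ that absorbs the constant shift into the \emph{slow} argument of the coefficient field. The key observation is that, since $r$ does not depend on $\ep$, translation by $r$ is a bicontinuous isometry on $H^1_0$ and $L^2$ that preserves all the notions of convergence figuring in the definition of $H$-convergence; the genuine $\ep$-oscillation is thereby brought back to the canonical form $y/\ep$ to which the classical theorem applies.

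First I would fix an arbitrary relatively-compact open set $\omega\subset\R^n$ together with $f\in H^{-1}(\omega)$, and let $u_\ep\in H^1_0(\omega)$ denote the solution of $-\operatorname{div}(A_\ep\nabla u_\ep)=f$. Introducing the translated unknown $v_\ep(y):=u_\ep(y-r)\in H^1_0(\omega+r)$ and using that
\[
A_\ep(x)=A\bigl(x,\tfrac{x+r}{\ep}\bigr)=A\bigl(y-r,\tfrac{y}{\ep}\bigr)\Big|_{y=x+r},
\]
I obtain that $v_\ep$ solves $-\operatorname{div}(\hat A_\ep\nabla v_\ep)=\hat f$ on $\omega+r$ for the shifted data $\hat f(y):=f(y-r)$ and the coefficient field $\hat A_\ep(y):=\hat A(y,y/\ep)$ with $\hat A(y,z):=A(y-r,z)$. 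A direct check shows $\hat A\in\mathcal M_{\mathrm{per}}(\lambda,\Lambda)$ with the same constants.

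Next I would apply the classical periodic homogenization theorem to $\hat A_\ep$, which yields $\hat A_\ep\Hto\hat A_{\hom}$ with $\hat A_{\hom}$ given by the cell formulas \eqref{perhom}--\eqref{perhom.b} written for $\hat A$. Because $\hat A(y,\cdot)=A(y-r,\cdot)$, the cell problem at $y$ for $\hat A$ coincides with the cell problem at $y-r$ for $A$, so that $\hat\phi_j(y,\cdot)=\phi_j(y-r,\cdot)$ and consequently
\[
\hat A_{\hom}(y)=A_{\hom}(y-r).
\]
Translating back via $u_\ep(x)=v_\ep(x+r)$, I conclude $u_\ep\rightharpoonup u_0$ with $u_0(x):=v_0(x+r)$ solving $-\operatorname{div}(A_{\hom}\nabla u_0)=f$ on $\omega$, and by the same translation $A_\ep\nabla u_\ep\rightharpoonup A_{\hom}\nabla u_0$ in $L^2(\omega;\R^n)$; since $\omega$ and $f$ were arbitrary, this is precisely $A_\ep\Hto A_{\hom}$.

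I do not anticipate any genuine obstacle: the argument is essentially translation bookkeeping on top of the cited classical theorem. The only facts to verify are that $\hat A\in\mathcal M_{\mathrm{per}}(\lambda,\Lambda)$ (immediate from the definition) and that weak convergence in $L^2$ and $H^1$ commutes with translation by the fixed vector $r$ (routine). The conceptual content of the lemma, namely the independence of the homogenized coefficient from $r$, is encoded in the identity $\hat A_{\hom}(y)=A_{\hom}(y-r)$ combined with the final translation back to the original variables.
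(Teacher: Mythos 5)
Your proof is correct, but it takes a genuinely different route from the paper. You absorb the shift by the change of variables $y=x+r$, so that $A_\varepsilon(x)=\hat A(y,\tfrac{y}{\varepsilon})$ with $\hat A(y,z):=A(y-r,z)$, check that $\hat A\in\mathcal M_{\mathrm{per}}(\lambda,\Lambda)$, invoke the classical locally periodic result quoted just before the lemma, observe from the cell problem \eqref{perhom.b} that $\hat A_{\hom}(y)=A_{\hom}(y-r)$, and translate back, using that $H$-convergence commutes with a fixed translation (solutions, fluxes and weak limits all transform by the same isometry); each of these steps is sound, and the final identity $\hat A_{\hom}(\cdot+r)=A_{\hom}$ gives exactly the claim. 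The paper argues quite differently and in a self-contained way: it first proves continuity of the correctors $x\mapsto\nabla\phi_i(x,\cdot)$ in $L^2(Y)$ via a Meyer estimate, then extracts a subsequential $H$-limit $A_0$ using the compactness result of Theorem~\ref{T1}, builds oscillating test functions $x_i+\varepsilon\phi_i(x,\tfrac{x+r}{\varepsilon})\eta_k$ with cut-offs and energy estimates, and identifies $A_0=A_{\hom}$ by writing $\tfrac{r}{\varepsilon}$ modulo $\mathbb Z^n$, passing to a convergent subsequence of the fractional parts and taking limits in a Riemann-sum type computation, before upgrading to the full sequence by uniqueness. Your reduction is shorter and makes the mechanism transparent (the shift is just a fixed translation of the slow variable), at the price of leaning entirely on the cited $x$-dependent periodic homogenization theorem in exactly the form stated; the paper's longer argument essentially re-proves that statement for the shifted sequence, and its machinery (corrector continuity plus tracking of the fractional shift $r_\varepsilon$) is more robust, e.g.\ it would also tolerate shifts varying with $\varepsilon$, which a single fixed translation cannot absorb.
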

Since we could not find a suitable reference in the literature we give the argument in the appendix. By appealing to periodic homogenization, we can make the following observation:
\begin{lemma}[Homogenization formula]\label{L:periodic} 
Let $\bL_\ep,\bL_0\in\mathcal M(M,\lambda,\Lambda)$ and suppose that $(\bLe)$ $H$-converges to $\bL_0$ on $M$. Fix a local coordinate chart $(\Omega,\Psi;x^1,x^2,\ldots,x^n)$ and let $A_\ep,A_0$ be the coefficient fields on $U\Subset\Psi(\Omega)$ associated with $\bL_\ep$ and $\bL_0$ defined by \eqref{def:A}. Suppose local periodicity in the sense that there exists a $Y:=[0,1)^n$-periodic coefficient field $L\colon\R^n\to\R^{n\times n}$ such that
\[
  g(\mathbb{L}_\ep(x)\tfrac{\partial}{\partial x^i},\tfrac{\partial}{\partial x^j})=L_{ij}(x,\tfrac{x}{\ep})\qquad\text{for a.e. }x\in\Omega.
\]
Then $\mathbb L_0$ on ${\omega}=\Psi^{-1}(U)\subset\Omega$ in local coordinates takes the form
\begin{equation*}
  (A_{\hom})_{ij}=\rho g(\mathbb{L}_0\nabla_gx^i,\nabla_gx^j)\qquad\text{a.e.~in }U,
\end{equation*}
where $A_{\hom}\colon U\to\R^{d\times d}$ is defined by \eqref{perhom} with $A(x,y):=\rho(x)L(y)$.
\end{lemma}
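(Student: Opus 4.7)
The plan is to reduce the identification of the $H$-limit to classical periodic homogenization on the Euclidean chart and then invoke uniqueness of the $H$-limit.

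\textbf{Step 1 (Transfer to the Euclidean chart).} By Lemma~\ref{local-chart}, the hypothesis $\mathbb{L}_\varepsilon \Hto \mathbb{L}_0$ on $(\omega, g, \mu)$ is equivalent to $A_\varepsilon \Hto A_0$ on $U \subset \mathbb{R}^n$ equipped with the Euclidean metric and Lebesgue measure, where $A_\varepsilon, A_0$ are associated to $\mathbb{L}_\varepsilon, \mathbb{L}_0$ through \eqref{def:A}. It therefore suffices to identify the Euclidean $H$-limit $A_0$ on $U$.

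\textbf{Step 2 (Exhibit a two-scale periodic structure).} A direct calculation in local coordinates, using $\nabla_g x^i = g^{ik}\partial_{x^k}$ together with $\rho = \sigma\sqrt{\det g}$ and the local periodicity hypothesis, exhibits $A_\varepsilon$ in the two-scale form
\[
A_\varepsilon(x) = A(x, x/\varepsilon),
\]
where $A(x,y)$ is smooth in the macroscopic variable $x \in U$ and $Y$-periodic in the fast variable $y \in \mathbb{R}^n$; it is built out of $\rho(x)$, the inverse metric components $g^{ij}(x)$ and $L$. Lemma~\ref{190514.01} applied to the uniformly elliptic family $(\mathbb{L}_\varepsilon)$ ensures that $A \in \mathcal{M}_{\mathrm{per}}(\lambda', \Lambda')$ for constants depending only on the chart, $\lambda$, and $\Lambda$.

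\textbf{Step 3 (Classical homogenization and identification).} At this point Lemma~\ref{ex:appendix} (with shift $r = 0$; equivalently \cite[Theorem~2.2]{allaire1992}) applies to $A$ and yields $A_\varepsilon \Hto A_{\hom}$ on $U$, with $A_{\hom}$ defined through the cell-problem formulas \eqref{perhom}--\eqref{perhom.b}. Combining this with Step 1 and the uniqueness of the $H$-limit (Lemma~\ref{L:local}(a)) forces $A_0 = A_{\hom}$ a.e.\ in $U$; the claimed representation of $\mathbb{L}_0$ in local coordinates then follows by reading \eqref{def:A} backwards.

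\textbf{Main obstacle.} The substantive part of the argument is Step~2: one must carefully convert the intrinsic periodicity of $\mathbb{L}_\varepsilon$, which is stated through the metric $g$ on coordinate vector fields, into a clean separated-variable expression $A(x, x/\varepsilon)$, and track how the smooth non-periodic factors arising from $\rho$ and from the inverse metric $(g^{ij})$ combine with the periodic factor $L$ so that the resulting $A$ lies in $\mathcal{M}_{\mathrm{per}}$ with uniform ellipticity constants. Once this representation is set up, the homogenization step itself is purely Euclidean and the identification of $A_0$ is immediate from uniqueness.
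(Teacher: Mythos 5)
Your proposal is correct and follows essentially the same route as the paper: the paper's proof is precisely the combination of Lemma~\ref{local-chart} (transfer of $H$-convergence to the chart) with the classical fact that $A_\ep(x)=A(x,\tfrac{x}{\ep})$ $H$-converges to $A_{\hom}$ (cf.\ \cite[Theorem~2.2]{allaire1992}, resp.\ Lemma~\ref{ex:appendix}), the identification $A_0=A_{\hom}$ then being forced by uniqueness of the $H$-limit. Your Steps 2--3 merely spell out the two-scale representation and the uniqueness argument that the paper leaves implicit.
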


\subsection{Asymptotic behavior of the Laplace-Beltrami on parametrized manifolds}\label{SpecConv}
In this section we consider weighted Riemannian manifolds $(M_{\varepsilon},g_{\varepsilon},\mu_{\varepsilon})$ that are bi-Lipschitz diffeomorphic to a reference manifold $(M_0,{g_0},{\mu_0})$ in the sense of Definition~\ref{D:biLip}. In particular, below we shall consider the special case of submanifolds of $\R^d$ and study the asymptotic behavior of the associated Laplace-Beltrami operator. In our approach we pull the Laplace-Beltrami operator on $M_\ep$, $\Delta_{g_\ep,\mu_\ep}$, back to the reference manifold $M_0$ by appealing to the diffeomorphism $h_\ep$ from Definition~\ref{D:biLip}. In this way we obtain a family of elliptic operators on $M_0$ with coefficients $\mathbb L_\ep$. By appealing to our result on $H$-compactness, cf.~Theorem~\ref{T1}, we may extract a subsequence along which the elliptic operators $H$-converge to a limiting operator of the form $\Div(\mathbb L_0\nabla)$. In the symmetric case, we may combine this with our results with Lemma~\ref{L:HtoMo} and Lemma~\ref{L:HtoSpec} to deduce Mosco-convergence and convergence of the spectrum.
\smallskip

We start with a transformation rule. It invokes the following notation: If $(M,g,\mu)$ and $(M_0,g_0,\mu_0)$ are Riemannian manifolds, and $h:M_0\to M$ a diffeomorphism, then for every function $f$ on $M$ we denote by $\overline{f}:=f\circ h$ the pullback of $f$ along $h$. 
Moreover, we denote by $(d h^{-1})^* : TM_0 \to TM$ the adjoint of the differential $d h^{-1} : TM \to TM_0$  of $h^{-1}$ given by 
\begin{equation*}
  g((dh^{-1})^*\xi,\eta)(h(x))=g_0(\xi,dh^{-1}\eta)(x)\qquad\text{for all $\xi\in T_xM_0$, $\eta\in T_{h(x)}M$}.
\end{equation*}
\begin{lemma}[Transformation lemma]\label{L:trafo}
  Let $(M,g,\mu)$ and $(M_0,{g_0},{\mu_0})$ be weighted Riemannian manifolds and assume that there exists a bi-Lipschitz diffeomorphism $h:M_0\to M$ satisfying \eqref{unifbound}. Let $\sigma$ and $\sigma_0$ denote the densities of $\mu$ and $\mu_0$ w.r.t.~the Riemannian volume measures associated with $g$ and $g_0$, respectively. We use the notation $\overline{f}:=f\circ h$ and $\overline{u}:=u\circ h$ for the pullback along $h$. We define a density function $\rho$ and a coefficient field $\mathbb{L}$ on $M_0$ by the identities
  \begin{equation*}
    \rho:=\tfrac{\overline{\sigma}}{\sigma_0}\sqrt{\tfrac{\det \overline{g}}{\det g_0}}
    \qquad\text{and}\qquad
    g_0(\mathbb L\xi,\eta)=\rho\, \overline{g}((dh^{-1})^*\xi,(dh^{-1})^*\eta),
  \end{equation*}
	where $\overline{\sigma}:=\sigma\circ h$ and $\overline{g}:=g\circ h$ denote the pulled back quantities.
              Moreover we consider the metric $\LimMetric $ and the measure $\LimMeasure $ on $M_0$ given by
              \begin{equation*}
                d\LimMeasure :=\rho d\mu_0
                \qquad\text{and}\qquad
                \LimMetric (\mathbb{L}\xi,\eta):=\rho\, g_0(\xi,\eta),
	\end{equation*}
        Then the following are equivalent:
        \begin{enumerate}[(a)]
        \item $u\in H^1(M)$ is a solution to
          \begin{equation*}
            (m-\Delta_{g,\mu})u=f\qquad\text{in $H^{-1}(M,g,\mu)$};
    \end{equation*}
  \item $\overline{u}\in H^1(M_0)$ is a solution to
    \begin{equation*}
      (m\rho-\Div_{g_0,\mu_0}(\mathbb{L}\nabla_{g_0}))\overline{u}=\rho\overline{f}\qquad\text{in $H^{-1}(M_0,g_0,\mu_0)$};
    \end{equation*}
	\item $\overline{u}\in H^1(M_0)$ is a solution to
    \begin{equation*}
      (m-\Delta_{\LimMetric ,\LimMeasure })\overline{u}=\overline{f}\qquad\text{in $H^{-1}(M_0,\LimMetric ,\LimMeasure )$}.
    \end{equation*}
  \end{enumerate}
\end{lemma}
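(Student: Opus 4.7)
My plan is to read each of (a)--(c) as a weak formulation and to show that after the natural change of variables they coincide. Concretely, I will first prove (a)$\Leftrightarrow$(b) by pulling back the equation on $M$ along $h$, and then obtain (b)$\Leftrightarrow$(c) by a direct rewriting using the definitions of $\hat g_0$ and $\hat\mu_0$. The uniform bi-Lipschitz bound \eqref{unifbound} together with the smoothness and strict positivity of $\sigma$ and $\sigma_0$ makes the map $u\mapsto\bar u:=u\circ h$ a topological isomorphism of $H^1(M,g,\mu)$ onto $H^1(M_0,g_0,\mu_0)$ that restricts to an isomorphism of the $H^1_0$ spaces and dualises to an isomorphism of the corresponding $H^{-1}$ spaces; this lets me transport weak solutions freely.

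For the step (a)$\Leftrightarrow$(b) the core ingredient is the manifold change-of-variables formula
\[
\int_M F\,d\mu = \int_{M_0} (F\circ h)\,\rho\,d\mu_0,\qquad \rho = \tfrac{\bar\sigma}{\sigma_0}\sqrt{\tfrac{\det\bar g}{\det g_0}},
\]
which holds because $\rho$ is precisely the Radon--Nikodym derivative of $h^{-1}_{*}\mu$ with respect to $\mu_0$; this is also where the specific form of $\rho$ in the statement comes from. Applied to $muv$ and to the pairing with $f$, this produces the factors $m\rho\bar u\bar v$ and $\rho\bar f\bar v$ that appear on the $M_0$ side. For the Dirichlet term I will derive the gradient transformation rule
\[
(\nabla_g u)\circ h = (dh^{-1})^{*}\nabla_{g_0}\bar u
\]
by dualising $d\bar u = h^{*}(du)$ with the metrics $g_0$ and $g$; substituting this into $g(\nabla_g u,\nabla_g v)\circ h$ and invoking the defining identity $g_0(\mathbb L\xi,\eta) = \rho\,\bar g((dh^{-1})^{*}\xi,(dh^{-1})^{*}\eta)$ produces $\rho^{-1}g_0(\mathbb L\nabla_{g_0}\bar u,\nabla_{g_0}\bar v)$, and the remaining factor $\rho$ coming from $d\mu = \rho\,d\mu_0$ cancels to leave exactly $g_0(\mathbb L\nabla_{g_0}\bar u,\nabla_{g_0}\bar v)\,d\mu_0$, which is the Dirichlet form of (b).

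For the step (b)$\Leftrightarrow$(c) I first verify that $\mathbb L$ is symmetric and positive definite with respect to $g_0$ (since $\bar g$ is a genuine metric and $\rho>0$), so that $\hat g_0$ extends by linearity to a genuine Riemannian metric on $M_0$. A short computation based on $\hat g_0(\nabla_{\hat g_0}\bar u,\eta) = d\bar u(\eta) = g_0(\nabla_{g_0}\bar u,\eta)$ then gives $\nabla_{\hat g_0}\bar u = \rho^{-1}\mathbb L\nabla_{g_0}\bar u$ and hence
\[
\hat g_0(\nabla_{\hat g_0}\bar u,\nabla_{\hat g_0}\bar v) = \rho^{-1}g_0(\mathbb L\nabla_{g_0}\bar u,\nabla_{g_0}\bar v).
\]
Combined with $d\hat\mu_0 = \rho\,d\mu_0$, each of the three terms in the weak form of (c) reproduces the corresponding term in (b), so (b) and (c) are literally the same identity written with two different sets of weights. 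The only delicate step in the whole argument is the gradient transformation formula and its matching with the defining identity of $\mathbb L$; once those are in hand, everything reduces to bookkeeping of determinants and positive smooth densities.
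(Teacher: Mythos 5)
Your proposal is correct and follows essentially the same route as the paper: the change-of-variables formula $\int_M F\,d\mu=\int_{M_0}(F\circ h)\rho\,d\mu_0$, the gradient transformation rule $(\nabla_g u)\circ h=(dh^{-1})^*\nabla_{g_0}\overline{u}$ obtained by dualising the differential, and the identity $\mathbb{L}\nabla_{g_0}\overline{u}=\rho\,\nabla_{\hat g_0}\overline{u}$ for the step (b)$\Leftrightarrow$(c) are exactly the ingredients of the paper's proof. Your added remarks (that $\rho$ is the Radon--Nikodym derivative, that $\mathbb{L}$ is $g_0$-symmetric and positive so $\hat g_0$ is a genuine metric, and that $u\mapsto\overline{u}$ is an isomorphism of the relevant $H^1$, $H^1_0$ and $H^{-1}$ spaces) are left implicit in the paper but are consistent with it.
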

In the rest of this section, we consider the following setting:
\begin{assumption}[Family of uniformly bi-Lipschitz diffeomorphic manifolds]\label{ass}
  We denote by $(M_\ep,g_\ep,\mu_\ep)$ a family of weighted Riemannian manifolds that are bi-Lipschitz diffeomorphic to a reference manifold $(M_0,g_0,\mu_0)$ in the sense of Definition~\ref{D:biLip}. We assume that $H^1(M_0,g_0,\mu_0)$ is compactly embedded in $L^2(M_0,g_0,\mu_0)$. We denote by $\sigma_{\varepsilon}$ and $\sigma_0$ the densities of $\mu_{\varepsilon}$ and $\mu_0$ w.r.t.~the Riemannian volume measures associated with $g_{\varepsilon}$ and $g_0$, respectively. Moreover, we define $\rho_\ep$ and $\mathbb L_\ep$ by the identities
  \begin{equation}\label{ass:eq1}
    \rho_\ep:=\tfrac{\overline{\sigma}_\ep}{\sigma_0}\sqrt{\tfrac{\det \overline{g}_{\ep}}{\det g_0}}
    \qquad\text{and}\qquad
    g_0(\mathbb L_\ep\xi,\eta)=\rho_\ep\, \overline{g}_\ep((dh_{\varepsilon}^{-1})^*\xi,(dh_{\varepsilon}^{-1})^*\eta)
  \end{equation}
	with $\overline{\sigma}_{\varepsilon}:=\sigma_{\varepsilon}\circ h_{\varepsilon}$ and $\overline{g}_{\varepsilon}:=g_{\varepsilon}\circ h_{\varepsilon}$.
\end{assumption}
We introduce the following notion of strong $L^2$-convergence for functions defined on the variable spaces $L^2(M_\ep,g_\ep,\mu_\ep)$:
\begin{definition}
In the setting of Assumption~\ref{ass}. Let $f_\ep\in L^2(M_{\varepsilon},g_\ep,\mu_\ep)$ and $f_0\in L^2(M_0,\LimMetric ,\LimMeasure )$. We say $(f_\ep)$ strongly converges to $f_0$ in $L^2$, if
\begin{equation}\label{strong-conv}
	\begin{aligned}
		&\int_{M_{\varepsilon}} f_\ep(\psi\circ h_{\varepsilon}^{-1})\,\dm_\ep\to \int_{M_0} f_0\psi\,\mathrm{d}\LimMeasure \qquad\text{for all }\psi\in C^\infty_c(M_0),\qquad\text{and}\\
		&\int_{M_{\varepsilon}}|f_{\varepsilon}|^2\,\mathrm{d}\mu_{\varepsilon}\to\int_{M_0}|f_0|^2\,\mathrm{d}\LimMeasure .
	\end{aligned}
\end{equation}
\end{definition}

\begin{lemma}[$H$-Compactness of bi-Lipschitz diffeomorphic manifolds]\label{prop33108}
  Consider the setting of Assumption~\ref{ass}. Then there exists a subsequence for $\ep\to 0$ (not relabeled) such that the following holds:
  \begin{enumerate}[(a)]
  \item There exist a density $\rho_0$ and a uniformly elliptic coefficient field $\mathbb L_0$ on $M_0$ such that 
    $(\rho_{\ep})$ converges to $\rho_0$ weak-$*$ in $L^\infty(M_0)$, and $(\mathbb L_\ep)$ $H$-converges to $\mathbb L_0$ in $(M_0,g_0,\mu_0)$.
  \item Define a measure $\LimMeasure $ and a metric $\LimMetric $ on $M_0$ via the identities
    \begin{equation*}
      \mathrm d\LimMeasure :=\rho_0\mathrm d\mu_0
      \qquad\text{and}\qquad
      \LimMetric (\mathbb{L}_0\xi,\eta)=\rho_0\, g_0(\xi,\eta).
    \end{equation*}
    Let $m>m_0(M_0,g_0,\mu_0)$ and let $u_{\varepsilon}\in H^1(M_{\varepsilon})$ and $u_0\in H^1(M_0)$ denote the unique solutions to
    \begin{subequations}
      \begin{align}\label{C:bilip:1}
        (m-\Delta_{g_{\varepsilon},\mu_{\varepsilon}}) u_\ep&=f_\ep	&&\text{in $H^{-1}(M_{\varepsilon},g_{\varepsilon},\mu_{\varepsilon})$},\\\label{C:bilip:2}
        (m-\Delta_{\LimMetric ,\LimMeasure })u_0&=f_0 &&\text{in
          $H^{-1}(M_0,\LimMetric ,\LimMeasure )$},
      \end{align}
    \end{subequations}
    and suppose that
    \begin{equation*}
      f_{\varepsilon}\to f_0\qquad\text{strongly in $L^2$ in the sense of \eqref{strong-conv}}.
    \end{equation*}
    Then        
    \begin{equation*}
      u_{\varepsilon}\to u_0\qquad\text{strongly in $L^2$ in the sense of \eqref{strong-conv}.}
    \end{equation*}
  \end{enumerate}
\end{lemma}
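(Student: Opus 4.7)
The plan is to dispose of part (a) by standard compactness, and then reduce part (b) via Lemma~\ref{L:trafo} to an application of the $H$-compactness Theorem~\ref{T1} on the fixed reference manifold $(M_0,g_0,\mu_0)$. The main technical obstacle is that the pulled-back equation on $M_0$ carries an \emph{oscillating} zeroth-order coefficient $m\rho_\varepsilon$, which is not directly covered by Theorem~\ref{T1}(b); I would handle this by a simple perturbation argument that absorbs the offending term into the right-hand side.

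For part (a), the bi-Lipschitz bound \eqref{unifbound} gives uniform upper and lower bounds on $\sqrt{\det\overline{g}_\varepsilon/\det g_0}$, which together with uniform bounds on the density ratio $\overline{\sigma}_\varepsilon/\sigma_0$ yields $\rho_\varepsilon\in L^\infty(M_0)$ uniformly in~$\varepsilon$. Moreover, \eqref{unifbound} and the definition \eqref{ass:eq1} show that $\mathbb{L}_\varepsilon\in\mathcal M(M_0,\lambda,\Lambda)$ for suitable constants $\lambda,\Lambda$ independent of~$\varepsilon$. Banach-Alaoglu extracts a subsequence along which $\rho_\varepsilon\stackrel{*}{\rightharpoonup}\rho_0$ in $L^\infty(M_0)$, and Theorem~\ref{T1} extracts a further subsequence with $\mathbb{L}_\varepsilon\Hto\mathbb{L}_0$ in $(M_0,g_0,\mu_0)$.

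For part (b) I would apply Lemma~\ref{L:trafo} twice. Taking $h=h_\varepsilon$, the equation \eqref{C:bilip:1} becomes
\begin{equation*}
m\rho_\varepsilon\overline{u}_\varepsilon-\Div_{g_0,\mu_0}(\mathbb{L}_\varepsilon\nabla_{g_0}\overline{u}_\varepsilon)=\rho_\varepsilon\overline{f}_\varepsilon\quad\text{in }H^{-1}(M_0,g_0,\mu_0),
\end{equation*}
with $\overline{u}_\varepsilon:=u_\varepsilon\circ h_\varepsilon$ and $\overline{f}_\varepsilon:=f_\varepsilon\circ h_\varepsilon$, while taking $h=\mathrm{id}$ regarded as a diffeomorphism $(M_0,g_0,\mu_0)\to(M_0,\hat g_0,\hat\mu_0)$, the limit equation \eqref{C:bilip:2} becomes
\begin{equation*}
m\rho_0 u_0-\Div_{g_0,\mu_0}(\mathbb{L}_0\nabla_{g_0}u_0)=\rho_0 f_0\quad\text{in }H^{-1}(M_0,g_0,\mu_0).
\end{equation*}
A standard energy estimate -- using uniform ellipticity of $\mathbb{L}_\varepsilon$, the uniform $L^\infty$-bound on $\rho_\varepsilon$, and $L^2$-boundedness of $\overline{f}_\varepsilon$ (an immediate consequence of~\eqref{strong-conv}) -- gives $\|\overline{u}_\varepsilon\|_{H^1(M_0)}\le C$. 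The compact-embedding hypothesis then supplies a further subsequence with $\overline{u}_\varepsilon\rightharpoonup\tilde u$ weakly in $H^1(M_0)$ and strongly in $L^2(M_0)$.

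To close the argument I would rewrite the $\overline{u}_\varepsilon$-equation as
\begin{equation*}
m\overline{u}_\varepsilon-\Div(\mathbb{L}_\varepsilon\nabla\overline{u}_\varepsilon)=\rho_\varepsilon\overline{f}_\varepsilon+m(1-\rho_\varepsilon)\overline{u}_\varepsilon=:g_\varepsilon,
\end{equation*}
putting it into the form required by Theorem~\ref{T1}(b). Pulling back \eqref{strong-conv} yields $\rho_\varepsilon\overline{f}_\varepsilon\rightharpoonup\rho_0 f_0$ weakly in $L^2(M_0,g_0,\mu_0)$; combining strong $L^2$-convergence of $\overline{u}_\varepsilon$ with weak-$*$ $L^\infty$-convergence of $\rho_\varepsilon$ gives $m(1-\rho_\varepsilon)\overline{u}_\varepsilon\rightharpoonup m(1-\rho_0)\tilde u$ weakly in $L^2$. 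Hence $g_\varepsilon\rightharpoonup\rho_0 f_0+m(1-\rho_0)\tilde u$ weakly in $L^2$, and Theorem~\ref{T1}(b) identifies $\tilde u$ as the unique solution of $m\tilde u-\Div(\mathbb{L}_0\nabla\tilde u)=\rho_0 f_0+m(1-\rho_0)\tilde u$; rearranging this equation yields the pulled-back limit PDE, so by uniqueness $\tilde u=u_0$. The strong $L^2$-convergence $u_\varepsilon\to u_0$ in the sense of~\eqref{strong-conv} then follows by rewriting both defining integrals on $M_\varepsilon$ as integrals on $M_0$ against $\rho_\varepsilon\,\mathrm{d}\mu_0$, at which point the requisite limits are again products of the weak-$*$ $L^\infty$ limit $\rho_\varepsilon\stackrel{*}{\rightharpoonup}\rho_0$ with the strong $L^2$ limit $\overline{u}_\varepsilon\to u_0$.
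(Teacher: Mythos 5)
Your proposal is correct and follows essentially the same route as the paper: pull back via Lemma~\ref{L:trafo}, obtain $H^1$-bounds and strong $L^2$-compactness from the compact embedding, treat the oscillating zeroth-order term as part of a weakly convergent right-hand side, apply Theorem~\ref{T1}(b), identify the limit by uniqueness together with the (b)$\Leftrightarrow$(c) equivalence of Lemma~\ref{L:trafo}, and deduce convergence in the sense of \eqref{strong-conv} from products of weak-$*$ $L^\infty$ and strong $L^2$ limits. The only (cosmetic) deviation is that the paper first replaces $m$ by a sufficiently large dummy constant $\overline m$ before invoking Theorem~\ref{T1}(b), which sidesteps verifying the hypothesis $m>m_0(M_0,g_0,\mu_0)/\lambda$ for the pulled-back ellipticity constant $\lambda=1/C_0$ — a condition you use implicitly; it does hold here since $m_0(M_0,g_0,\mu_0)\le 0$ and one may take $\lambda\le 1$, but it merits an explicit remark.
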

The coefficient field $\mathbb{L}_{\varepsilon}$ in Lemma~\ref{prop33108} is symmetric and uniformly elliptic (with respect to $g_0$) by construction. Therefore, similarly to Lemma~\ref{L:HtoSpec} we may deduce convergence of the spectrum of the Laplace-Beltrami operators. To that end, we additionally suppose that $M_0$ is compact and $m_0(M_0)<0$. Thanks to \eqref{unifbound}, the weighted Riemannian manifolds $M_{\ep}$ satisfy the same properties, and thus the spectrum of $-\Delta_{g_\varepsilon,\mu_\ep}$ consists only of the real point spectrum with  strictly positive eigenvalues.
\begin{lemma}[Spectral convergence of bi-Lipschitz diffeomorphic manifolds]\label{171017.cor}
  Suppose that $M_0$ is compact and $m_0(M_0)<0$. Consider the setting of Assumption~\ref{ass}, and let $\overline g_0$, $\overline \mu_0$ be defined as Lemma~\ref{prop33108} (b). For $\ep\geq 0$ consider the operator
  \begin{equation*}
    \left\{\begin{aligned}
      &-\Delta_{g_{\varepsilon},\mu_{\varepsilon}}:H^1_0(M_\ep,g_\ep,\mu_\ep)\subset L^2(M_\ep,g_\ep,\mu_\ep)\to L^2(M_\ep,g_\ep,\mu_\ep)&&\text{for }\ep>0,\\
      &-\Delta_{\LimMetric,\LimMeasure}:H^1_0(M_0,\LimMetric,\LimMeasure)\subset L^2(M_0,\LimMetric,\LimMeasure)\to
        L^2(M_0,\LimMetric,\LimMeasure)&&\text{for }\ep=0,
      \end{aligned}\right.
  \end{equation*}
  and let
  \begin{equation*}
    0<\lambda_{\varepsilon,1}\le\lambda_{\varepsilon,2}\le\lambda_{\varepsilon,3}\leq \cdots,
  \end{equation*}

denote the increasingly ordered eigenvalues with eigenvalues being repeated according to their multiplicity. Let $u_{\ep,1},u_{\ep,2},u_{\ep,3},\ldots$ denote associated orthonormal eigenfunctions.  
Then for all $k\in\mathbb N$,
  \begin{equation*}
    \lambda_{\varepsilon,k}\to\lambda_{0,k},
  \end{equation*}
  and if $s\in\mathbb{N}$ is the multiplicity of $\lambda_{0,k}$, i.e.
  \begin{equation*}
    \lambda_{0,k-1}<\lambda_{0,k}=\cdots=\lambda_{0,k+s-1}<\lambda_{0,k+s} \qquad\text{(with the convention $\lambda_{0,0}=0$)},
  \end{equation*}
  then there exists a sequence $(\bar{u}_{\varepsilon,k})_{\varepsilon}$ of linear combinations of $u_{\varepsilon,k},\ldots,u_{\varepsilon,k+s-1}$ such that
  \begin{equation}\label{T:spectral:conv}
    \bar{u}_{\varepsilon,k}\to u_{0,k}\qquad\text{strongly in $L^2$ in the sense of \eqref{strong-conv}.}
  \end{equation}
\end{lemma}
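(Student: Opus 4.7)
My plan is to deduce the statement from the strong resolvent convergence provided by Lemma~\ref{prop33108}, combined with an abstract spectral convergence result for compact self-adjoint operators on varying Hilbert spaces (as in \cite{KuwaeShioya2003} or in the spirit of \cite[Chapter~11]{Zhikov-book}). The underlying observation is that convergence of eigenvalues and eigenfunctions of $-\Delta_{g_\varepsilon,\mu_\varepsilon}$ is equivalent to convergence of the spectrum of the corresponding resolvent, and Lemma~\ref{prop33108}(b) is precisely a strong resolvent convergence statement adapted to the variable-space notion~\eqref{strong-conv}.

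First, I would fix some $m>m_0(M_0,g_0,\mu_0)$ (which is allowed since $m_0(M_0)<0$) and consider the resolvent operators
\[
R_\varepsilon := (m-\Delta_{g_\varepsilon,\mu_\varepsilon})^{-1}\qquad(\varepsilon>0),\qquad R_0 := (m-\Delta_{\hat g_0,\hat\mu_0})^{-1}.
\]
Each $R_\varepsilon$ is compact, positive and self-adjoint on its respective $L^2$ space, and shares its eigenfunctions with $-\Delta_{g_\varepsilon,\mu_\varepsilon}$ with eigenvalues $(m+\lambda_{\varepsilon,k})^{-1}$. Hence it suffices to prove that the spectrum and the spectral projections of $R_\varepsilon$ converge to those of $R_0$ in the sense of \eqref{strong-conv}.

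Second, I would verify the two hypotheses required by the abstract spectral convergence framework: (i) \emph{strong resolvent convergence}, i.e.~$f_\varepsilon\to f_0$ strongly in $L^2$ in the sense of \eqref{strong-conv} implies $R_\varepsilon f_\varepsilon \to R_0 f_0$ in the same sense, which is exactly the content of Lemma~\ref{prop33108}(b); and (ii) \emph{asymptotic compactness}, namely that $\sup_\varepsilon\|f_\varepsilon\|_{L^2(M_\varepsilon)}<\infty$ implies that $(R_\varepsilon f_\varepsilon)$ has a subsequence converging strongly in $L^2$ in the sense of \eqref{strong-conv}. For (ii) I would pull back to $M_0$ by Lemma~\ref{L:trafo}: the solution $u_\varepsilon:=R_\varepsilon f_\varepsilon$ corresponds to $\bar u_\varepsilon\in H^1_0(M_0)$ satisfying $m\rho_\varepsilon\bar u_\varepsilon-\Div_{g_0,\mu_0}(\mathbb L_\varepsilon\nabla_{g_0}\bar u_\varepsilon)=\rho_\varepsilon\bar f_\varepsilon$, and the Lax--Milgram a~priori estimate combined with the uniform ellipticity and the bi-Lipschitz bound~\eqref{unifbound} yields $\|\bar u_\varepsilon\|_{H^1(M_0)}\le C\|\bar f_\varepsilon\|_{L^2(M_0)}\le C'\|f_\varepsilon\|_{L^2(M_\varepsilon)}$. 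The compact embedding $H^1(M_0)\hookrightarrow L^2(M_0)$ from Assumption~\ref{ass} then produces a strongly $L^2$-convergent subsequence $\bar u_\varepsilon\to \bar u$ in $L^2(M_0,\mu_0)$, which together with the weak-$*$ convergence $\rho_\varepsilon\to\rho_0$ in $L^\infty(M_0)$ translates to convergence of $u_\varepsilon$ in the sense of \eqref{strong-conv}.

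Third, with (i) and (ii) in hand the abstract spectral convergence theorem gives $(m+\lambda_{\varepsilon,k})^{-1}\to(m+\lambda_{0,k})^{-1}$ with multiplicities preserved, whence $\lambda_{\varepsilon,k}\to\lambda_{0,k}$, together with strong convergence of the spectral projections onto the eigenspaces. Since each limit eigenspace corresponding to $\lambda_{0,k}$ has finite dimension $s$, one extracts from the images of the projections a sequence $(\bar u_{\varepsilon,k})$ of linear combinations of $u_{\varepsilon,k},\dots,u_{\varepsilon,k+s-1}$ converging to $u_{0,k}$ strongly in the sense of \eqref{strong-conv}, as claimed in~\eqref{T:spectral:conv}.

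The main obstacle I anticipate is the varying Hilbert space structure: the spaces $L^2(M_\varepsilon,g_\varepsilon,\mu_\varepsilon)$ are genuinely different for different~$\varepsilon$, so standard spectral convergence theorems on a fixed Hilbert space do not apply directly. The Kuwae--Shioya framework is designed for exactly this setting, but one has to check carefully that~\eqref{strong-conv} is a valid notion of strong convergence in their sense (closedness under diagonal extraction, compatibility with the $L^2$-norm, etc.) and that asymptotic compactness indeed holds uniformly in $\varepsilon$. Both reduce to the bi-Lipschitz bound~\eqref{unifbound}, which guarantees that all relevant quantities on $M_\varepsilon$ are uniformly equivalent to their pullbacks on the fixed reference manifold $M_0$, reducing everything to an analysis on the single space $(M_0,g_0,\mu_0)$.
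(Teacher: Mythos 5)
Your proposal is correct and follows essentially the same route as the paper: both arguments rest on the pull-back to the fixed reference manifold via Lemma~\ref{L:trafo}, the strong (variable-space) convergence of solutions from Lemma~\ref{prop33108}(b), the compact embedding of Assumption~\ref{ass} to get asymptotic compactness, and then an abstract spectral-convergence result (\cite[Lemma~11.3, Theorem~11.5]{Zhikov-book}, equivalently the Kuwae--Shioya framework \cite{KuwaeShioya2003}) to obtain convergence of the ordered eigenvalues and of linear combinations of eigenfunctions. The only cosmetic difference is that you phrase the verification at the level of the compact resolvents $(m-\Delta_{g_\ep,\mu_\ep})^{-1}$, whereas the paper supplements the appeal to the abstract theorem with explicit upper and lower bounds on the eigenvalues and a direct check that normalized eigenpairs converge (along subsequences) to eigenpairs of $-\Delta_{\LimMetric,\LimMeasure}$ — the same content in a slightly different packaging.
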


We finally discuss the special case of submanifolds of $\R^d$. In the following lemma we collect (without proof) some consequences that directly follow from Lemma~\ref{L:trafo}, Lemma~\ref{prop33108}, and Lemma~\ref{171017.cor} applied to the special case. 
\begin{lemma}\label{L:submani}
  Consider the setting of Assumption~\ref{ass}, and assume that
  \begin{itemize}
  \item $M_{\varepsilon}$ are $n$-dimensional submanifolds of the Euclidean space
    $\mathbb{R}^d$ with $g_\ep$ and $\mu_\ep$ induced by the standard metric and measure of  $\mathbb{R}^d$;
  \item the reference manifold $M_0$ is a subset of the Euclidean space $\R^n$, i.e., $M_0\subset\R^n$, $g_0(\xi,\eta):=\xi\cdot\eta$, and $\mathrm d\mu_0=\mathrm dx$.
  \end{itemize}
  Then:
  \begin{enumerate}[(a)]
  \item The formulas in \eqref{ass:eq1} turn into
    \begin{equation*}
      \rho_{\varepsilon}=\sqrt{\det(dh_{\varepsilon}^{\sf T}dh_{\varepsilon})}
      \qquad\text{and}\qquad
      \mathbb L_{\varepsilon}=\rho_{\varepsilon}(dh_{\varepsilon}^{\sf T}dh_{\varepsilon})^{-1},
    \end{equation*}
    where $dh_{\varepsilon}$ denotes the Jacobian of $h_{\varepsilon}$.
  \item An application of Lemma~\ref{prop33108} yields the existence of a density $\rho_0$ and a coefficient field $\mathbb{L}_0\in\mathcal{M}(M_0,\frac{1}{C_0},C_0)$ (with $C_0>0$ only depending on $n$, $\lambda$, $\Lambda$ and the constant $C$ in \eqref{unifbound}) such that
    \begin{equation*}
      \begin{aligned}
        &\rho_{\varepsilon}=\sqrt{\det(dh_{\varepsilon}^{\sf T}dh_{\varepsilon})}&&\stackrel{*}{\rightharpoonup}\rho_0&&\text{weakly-$*$ in $L^{\infty}(M_0)$},\\
        &\mathbb{L}_{\varepsilon}=\rho_{\varepsilon}(dh_{\varepsilon}^{\sf T}dh_{\varepsilon})^{-1}&&\overset{H}{\to}\mathbb{L}_0&&\text{on $M_0$},
      \end{aligned}
    \end{equation*}
    for a subsequence (not relabeled), and the limiting Riemannian manifold $(M_0,\LimMetric ,\LimMeasure )$ is then given by
    \begin{equation*}
      \mathrm{d}\LimMeasure =\rho_0\mathrm{d}x
      \qquad\text{and}\qquad
      \LimMetric (\xi,\eta)=\rho_0\mathbb{L}_0^{-1}\xi\cdot\eta.
    \end{equation*}
  \item If additionally $M_0$ is open and bounded and has a Lipschitz boundary, then the conclusion of Lemma~\ref{171017.cor} on spectral convergence holds.
\end{enumerate}
\end{lemma}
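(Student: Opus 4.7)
The statement is a direct specialization of Lemma~\ref{L:trafo}, Lemma~\ref{prop33108}, and Lemma~\ref{171017.cor} to the setting where the ambient geometry is flat. Accordingly, the plan is to verify (a) by an explicit computation of the quantities in \eqref{ass:eq1}, and then to observe that parts (b) and (c) reduce to checking that the hypotheses of Lemma~\ref{prop33108} and Lemma~\ref{171017.cor} are met (with uniform constants).

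For (a), I would start with the observation that, because $M_\varepsilon\subset\R^d$ carries the induced metric and measure, the density $\sigma_\varepsilon$ of $\mu_\varepsilon$ against the Riemannian volume is identically $1$; likewise $\sigma_0\equiv 1$ and $\det g_0\equiv 1$. The pullback metric satisfies
\begin{equation*}
	\overline{g}_\varepsilon(\xi,\eta)(x)=g_\varepsilon(dh_\varepsilon(x)\xi,dh_\varepsilon(x)\eta)=\bigl(dh_\varepsilon(x)\xi\bigr)\cdot\bigl(dh_\varepsilon(x)\eta\bigr)=\xi^{\sf T}dh_\varepsilon^{\sf T}dh_\varepsilon\,\eta,
\end{equation*}
so that $\det\overline{g}_\varepsilon=\det(dh_\varepsilon^{\sf T}dh_\varepsilon)$ and the first formula in (a) follows. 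For the coefficient field, I would identify the adjoint $(dh_\varepsilon^{-1})^*\colon T_xM_0\to T_{h_\varepsilon(x)}M_\varepsilon$. Writing $J:=dh_\varepsilon(x)$ as an $\R^{d\times n}$ matrix of full rank, every vector in $T_{h_\varepsilon(x)}M_\varepsilon$ has the form $J\alpha$ for some $\alpha\in\R^n$; setting $(dh_\varepsilon^{-1})^*\xi=J\alpha$ and testing the defining identity $g_\varepsilon((dh_\varepsilon^{-1})^*\xi,J\beta)=\xi\cdot\beta$ against all $\beta\in\R^n$ yields $J^{\sf T}J\alpha=\xi$, i.e.\ $(dh_\varepsilon^{-1})^*\xi=J(J^{\sf T}J)^{-1}\xi$. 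Substituting into the definition of $\mathbb L_\varepsilon$ gives $g_0(\mathbb L_\varepsilon\xi,\eta)=\rho_\varepsilon\,\xi^{\sf T}(J^{\sf T}J)^{-1}\eta$, which is the desired identity.

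For (b), the bi-Lipschitz bound \eqref{unifbound} implies that the eigenvalues of $dh_\varepsilon^{\sf T}dh_\varepsilon$ lie in $[1/C^2,C^2]$ pointwise, hence $\rho_\varepsilon\in[C^{-n},C^n]$ and $\mathbb L_\varepsilon$ belongs to $\mathcal M(M_0,\lambda,\Lambda)$ for constants $\lambda,\Lambda$ depending only on $n$ and $C$. Therefore Lemma~\ref{prop33108}(a) applies and yields, along a subsequence, the weak-$*$ convergence of $\rho_\varepsilon$ and the $H$-convergence of $\mathbb L_\varepsilon$; the formulas for $\LimMetric$ and $\LimMeasure$ are just those in Lemma~\ref{prop33108}(b) rewritten in the flat reference coordinates. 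For (c), under the additional hypothesis that $M_0\subset\R^n$ is open, bounded and Lipschitz, the embedding $H^1_0(M_0)\hookrightarrow L^2(M_0)$ is compact by Rellich--Kondrachov and Poincaré's inequality \eqref{poincare} holds, so $m_0(M_0)<0$; applying Lemma~\ref{171017.cor} (whose proof only uses these two properties of the reference manifold) then gives spectral convergence. The only point that demands a little care is the identification of the adjoint in (a); once that is in place, the rest is bookkeeping.
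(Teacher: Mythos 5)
Your proposal is correct and takes exactly the route the paper intends: the paper states this lemma \emph{without proof}, as consequences that ``directly follow'' from Lemma~\ref{L:trafo}, Lemma~\ref{prop33108} and Lemma~\ref{171017.cor} applied to the flat special case. Your explicit identification of the adjoint, $(dh_{\varepsilon}^{-1})^*\xi=dh_{\varepsilon}(dh_{\varepsilon}^{\sf T}dh_{\varepsilon})^{-1}\xi$, the resulting formulas for $\rho_{\varepsilon}$ and $\mathbb{L}_{\varepsilon}$, the uniform ellipticity bounds from \eqref{unifbound}, and the verification that a bounded Lipschitz $M_0$ gives the compact embedding and $m_0(M_0)<0$ needed for Lemma~\ref{171017.cor} supply precisely the details the paper leaves to the reader.
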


\begin{remark}[Realizability of $(M_0,\LimMetric,\LimMeasure)$]\label{SubmanifoldRemark}
  If the limiting metric $\LimMetric$ is smooth, then it is realizable in $\R^m$ with $m$ large enough, i.e., there exists an isometry $h_0:(M_0,\LimMetric,\LimMeasure)\to\R^{m}$ such that $N_0:=h_0(M_0)$ is a $n$-dimensional submanifold of $\mathbb{R}^m$ (with induced metric and measure from $\R^m$). Such an embedding is characterized by the identity
  \begin{equation}\label{eq:embedding}
    dh_0^{\sf T}dh_0=\rho_0\mathbb{L}_0^{-1}.
  \end{equation}
  Indeed, this follows by the Nash embedding theorem provided the dimension of the ambient space $m$ is large enough. However, in the general case, we cannot necessarily give an explicit definition of the immersion $h_0$. In the examples that we discuss in Section~\ref{Sec:Examples} below, we study parametrized, $n=2$-dimensional submanifolds of $\R^3$ that converge to a limiting manifold that is realizable as a $2$-dimensional submanifold of $\R^3$ and given by an explicit formula.
\end{remark}

\section{Examples}\label{Sec:Examples}

In the following we consider two examples of laminate-like coefficient fields. We study each of them by appealing to homogenization in the flat case via local charts.  
Note that the coefficient fields in the following examples are intrinsic objects that could be considered without using charts, and so the respective $H$-limit, even though it is studied and expressed in local coordinates, is not bound to charts.

\subsection{Laminate-like coefficient fields on spherically symmetric manifolds}\label{s:Laminate-like-emxample}\label{Example1}

Let $0< R \le \infty$ and $s \in C^\infty([0,R))$ such that $s(r)>0$ if $r>0$, $s(0)=0$, and $s'(0)=1$.
We consider the $2$-dimensional spherically symmetric manifold 
$M=\{(x_1,x_2)=(r,\theta)\in [0,R) \times \bS^1\}$ equipped 
with the Riemannian metric
\[g=dr^2 + s^2(r) d\th^2\]
in the polar coordinates $(r,\th)$ (see e.g.\ \cite{Grigoryan2009}). 
For example, 
\begin{itemize}
\item $\R^2$ is a model with $R=\infty$ and $s(r)=r$;
\item $\bS^2$ without pole is a model with $R=\pi$ and $s(r)=\sin r$; 
\item $\mathbb H^2$ is a model with $R=\infty$ and $s(r)=\sinh r$.
\end{itemize}
For the sake of simplicity we normalize $\bS^1$ to have circumference 1. 
Consider $\bLe\in \mathcal M (M, \lambda, \Lambda)$ of the form
\[\bLe (r,\theta)=\bL_{\#}\big(r,\theta,\tfrac{\th}{\ep}\big)
\qquad\mbox{a.e.\  in $M$}
\]
and assume that $M\ni (r,\theta)\mapsto \bL_{\#}(r,\theta,y)$ is continuous for a.e.~$y\in\R$ 
and $y\mapsto \bL_{\#}(r,\theta,y)$ is measurable and $1$-periodic for all $(r,\theta)\in M$.
Denoting by $\{\phi(t)\}$ the one-parameter group
\[\phi(t):x \mapsto \exp_x \left(t {\frac{\pa}{\pa \th}} \right), \qquad x \in M \setminus \mbox{pole(s)},\ t \in \R,\]
the coefficient field $\mathbb L_{\ep}$ oscillates (on scale $\ep$) 
along $\phi$, while it is slowly varying in the radius direction. 
We therefore call $\mathbb L_{\ep}$ a \textit{laminate-like coefficient field} on $M$, see Figure~\ref{Example1:Fig0}.

\begin{figure}[ht]
	\begin{minipage}{0.3\textwidth}
		\centering\includegraphics[width=0.75\textwidth]{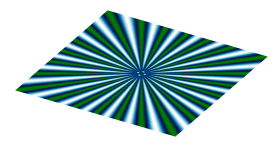}
	\end{minipage}\begin{minipage}{0.3\textwidth}
		\centering\includegraphics[width=0.75\textwidth]{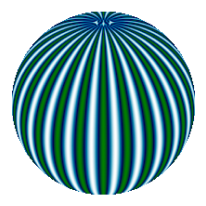}
	\end{minipage}\begin{minipage}{0.3\textwidth}
		\centering\includegraphics[width=0.75\textwidth]{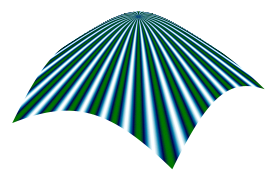}
	\end{minipage}
	\caption{Illustrations of the laminate-like structure of the coefficient field on $\mathbb{R}^2$, $\mathbb{S}^2$ and $\mathbb{H}^2$.}\label{Example1:Fig0}
\end{figure}

We make the following observations:
\begin{enumerate}[(a)]
\item\label{Example1:part1}By Theorem~\ref{T1} we have $\mathbb L_{\ep}\Hto\mathbb L_0$ for a subsequence and some 
coefficient field $\mathbb L_0$. As we shall see below, the limit $\mathbb L_0$ can be expressed by 
a ``homogenization formula'' that uniquely determines $\mathbb L_0$ in terms of $\mathbb L_{\#}$. 
Hence, $\mathbb L_0$ is independent of the chosen subsequence and we conclude that $\mathbb L_\ep\Hto\mathbb L_0$ for all sequences $\ep\downarrow 0$. 
\item\label{Example1:part2}Consider the special case
\begin{equation}\label{example:special}
  \bL_{\#}(r,\theta,y):=
  \begin{pmatrix}
    a_{\#}(y)&0\\0&b_{\#}(y)
  \end{pmatrix}
\end{equation}
with $a_\#,b_{\#}:\R\to (\lambda,\Lambda)$ measurable and $1$-periodic. 
Above, we tacitly expressed $\mathbb L_{\#}$ w.r.t.~polar coordinates, i.e.~$(\mathbb L_{\#})_{ij}:=(\tfrac{\partial}{\partial x^i},\mathbb L_{\#}\tfrac{\partial}{\partial x^j})$ where $x=(x^1,x^2)=(r,\theta)$. 
In this case we may represent $\mathbb L_0$ with help of the arithmetic and harmonic mean of $a_{\#}$ and $b_{\#}$ to express the diffusivity orthogonal to 
the flow $\phi$ and aligned to the flow $\phi$, respectively:
\begin{equation}
  \mathbb L_0=\label{example:special_hom}
  \begin{pmatrix}
    \int_0^1a_{\#}&0\\
    0&(\int_0^1b_{\#}^{-1})^{-1}
  \end{pmatrix}.
\end{equation}
\end{enumerate}
\medskip

In order to prove these claims it suffices to identify $\mathbb L_0$ locally. 
Consider an open, bounded set $\omega\Subset M$. 
We may assume without loss of generality that $\overline \omega$ does not intersect the curve 
$\{(r,\theta)\,:\,\theta=0\}$.
Denote the chart of polar coordinates by $\Psi$ and define $U\subset\mathbb R^2$ by $U:=\Psi(\omega)$. 
According to \eqref{def:A} we associate to $\mathbb L_{\ep}$ a coefficient field $A_\ep$ on $U$. 
It can be written in the form $A_{\ep}(r,\theta)=A_{\#}(r,\theta,\tfrac{\theta}{\ep})$ with
\begin{equation*}
  A_\#(r,\theta,y)=
  \begin{pmatrix}
    s(r)&0\\
    0&s^{-1}(r)
  \end{pmatrix}\mathbb L_{\#}(r,\theta,y),
  \end{equation*}
where we identified $\mathbb L_{\#}(r,\theta,y)$ with the corresponding coefficient matrix in polar coordinates.
Since $\mathbb L_\ep\Hto\mathbb L_0$ on $\omega$, 
we have $A_\ep\Hto A_0$ on $U$ by Lemma \ref{local-chart}. 
On the other hand, since $A_{\ep}$ is a coefficient field of the form $A_{\#}(r,\theta,\frac{\theta}{\ep})$ 
with $A_{\#}$ being continuous in the first two components and periodic in the third component, 
the periodic homogenization formula 
\eqref{perhom} applies 
and we deduce that $A_0$ only depends on $\mathbb L_{\#}$ and 
the metric $g$ (but not on the extracted subsequence). 
Hence, $\mathbb L_0$ is uniquely determined by $\mathbb L_{\#}$ and the metric, and thus $H$-convergence holds for the entire sequence. This proves \eqref{Example1:part1}

Next, we discuss the special case \eqref{example:special} for which we obtain
\begin{equation*}
  A_\#(r,\th,y)=
  \begin{pmatrix}
    s(r)a_\# \left( {\frac{\th}{\ep}} \right)       &  0 \\
    0      &  s^{-1}(r)b_\# \left( {\frac{\th}{\ep}} \right) 
  \end{pmatrix}
\end{equation*}
and 
\begin{equation*}
  A_0(r,\th)=
  \begin{pmatrix}
    s(r)\int_0^1a_\#      &  0 \\
    0      &  s^{-1}(r)(\int_0^1b_\#^{-1})^{-1} 
  \end{pmatrix}.
\end{equation*}
The above identities can be seen by evaluating \eqref{perhom}, which in the case of laminates can be done by hand. This proves \eqref{Example1:part2}.
\medskip

\paragraph{\textbf{Example 1: A graphical surface with star-shaped corrugations}}
In the spirit of Definition~\ref{D:biLip} we start with the reference manifold
\begin{equation*}
	M_0=\{(r,\theta);r\in(0,R),\theta\in[0,2\pi)\}
\end{equation*}
for some $R>0$. Note that $M_0$ does not include the origin. Now we define a family $M_{\varepsilon}=h_{\varepsilon}(M_0)$ of $2$-dimensional submanifolds of $\mathbb{R}^3$ (with standard metric and measure induced from $\mathbb{R}^3$) using uniform bi-Lipschitz immersions $h_{\varepsilon}\colon M_0\to\mathbb{R}^3$,
\begin{equation*}
	h_{\varepsilon}(r,\theta)=
	\begin{pmatrix}
		r\sin\theta\\
		r\cos\theta\\
		\varepsilon f(r,\tfrac{\theta}{\varepsilon})
	\end{pmatrix},
\end{equation*}
where $f\colon(0,\infty)\times[0,\infty)\to\mathbb{R}$ is smooth and $2\pi$-periodic in the second argument. In Figure~\ref{Example1:Fig1} in the Introduction we choose $f(r,y)=\sin^2(y)$ to present $M_{\varepsilon}$ for some values of $\varepsilon$.

We follow the path described in Lemma~\ref{L:submani} and calculate first
\begin{equation*}
	dh_{\varepsilon}^{\sf T}dh_{\varepsilon}=
	\begingroup
	\renewcommand*{\arraystretch}{1.5}
	\begin{pmatrix}
		1+\bigl(\varepsilon\partial_1f(r,\tfrac{\theta}{\varepsilon})\bigr)^2&\varepsilon\partial_1f(r,\tfrac{\theta}{\varepsilon})\partial_2f(r,\tfrac{\theta}{\varepsilon})\\
			\varepsilon\partial_1f(r,\tfrac{\theta}{\varepsilon})\partial_2f(r,\tfrac{\theta}{\varepsilon})&r^2+\bigl(\partial_2f(r,\tfrac{\theta}{\varepsilon})\bigr)^2
	\end{pmatrix},
	\endgroup
\end{equation*}
to get the density
\begin{equation*}
	\rho_{\varepsilon}
	=\sqrt{\det(dh_{\varepsilon}^{\sf T}dh_{\varepsilon})}
	=\sqrt{r^2+r^2\bigl(\varepsilon\partial_1f(r,\tfrac{\theta}{\varepsilon})\bigr)^2+\bigl(\partial_2f(r,\tfrac{\theta}{\varepsilon})\bigr)^2},
\end{equation*}
and the coefficient field
\begin{align*}
	\mathbb{L}_{\varepsilon}
	&=\rho_{\varepsilon}(dh_{\varepsilon}^{\sf T}dh_{\varepsilon})^{-1}\\
	&=1/\rho_{\varepsilon}
	\begingroup
	\renewcommand*{\arraystretch}{1.5}
	\begin{pmatrix}
		r^2+\bigl(\partial_2f(r,\tfrac{\theta}{\varepsilon})\bigr)^2&-\varepsilon\partial_1f(r,\tfrac{\theta}{\varepsilon})\partial_2f(r,\tfrac{\theta}{\varepsilon})\\
		-\varepsilon\partial_1f(r,\tfrac{\theta}{\varepsilon})\partial_2f(r,\tfrac{\theta}{\varepsilon})&1+\bigl(\varepsilon\partial_1f(r,\tfrac{\theta}{\varepsilon})\bigr)^2
	\end{pmatrix}.
	\endgroup
\end{align*}
It turns out that $\rho_{\varepsilon}\stackrel{*}{\rightharpoonup}\rho_0$ weakly-$*$ in $L^{\infty}(M_0)$ with
\begin{equation*}
	\rho_0(r)
	=\tfrac{1}{2\pi}\int_0^{2\pi}\sqrt{\bigl(\partial_2f(r,y)\bigr)^2+r^2}\,\mathrm{d}y,
\end{equation*}
and using \eqref{example:special_hom} we see $\mathbb{L}_{\varepsilon}\overset{H}{\to}\mathbb{L}_0$ with
\begin{align*}
	\mathbb{L}_0
	&=
	\begin{pmatrix}
		\tfrac{1}{2\pi}\int_0^{2\pi}\sqrt{\bigl(\partial_2f(r,y)\bigr)^2+r^2}\,\mathrm{d}y&0\\
		0&\Bigl(\tfrac{1}{2\pi}\int_0^{2\pi}\sqrt{\bigl(\partial_2f(r,y)\bigr)^2+r^2}\,\mathrm{d}y\Bigr)^{-1}
	\end{pmatrix}\\
	&=
	\begin{pmatrix}
		\rho_0(r)&0\\
		0&\tfrac{1}{\rho_0(r)}
	\end{pmatrix}.
\end{align*}
Thus the limiting metric on $M_0$ is given by
\begin{equation*}
	\LimMetric (\xi,\eta)
	=\rho_0\mathbb{L}_0^{-1}\xi\cdot\eta
	=\begin{pmatrix}
			1&0\\
			0&\rho_0^2
		\end{pmatrix}
		\xi\cdot\eta.
\end{equation*}
In this situation we finally can find a bi-Lipschitz immersion $h_0\colon M_0\to\mathbb{R}^3$ such that $dh_0^{\sf T}dh_0=\rho_0\mathbb{L}_0^{-1}$, namely
\begin{equation*}
	h_0(r,\theta)=
	\begin{pmatrix}
		\rho_0(r)\sin\theta\\
		\rho_0(r)\cos\theta\\
		\int_0^r\sqrt{1-\rho_0'(t)^2}\,\mathrm{d}t
	\end{pmatrix}.
\end{equation*}
That means, by Remark~\ref{SubmanifoldRemark}, the (rotationally symmetric) submanifold $\refmani:=h_0(M_0)$ of $\mathbb{R}^3$ (with the standard measure and metric induced from $\mathbb{R}^3$), which for the case $f(r,y)=\sin^2(y)$ is pictured in Figure~\ref{Example1:Fig1}, is the spectral limit of $(M_{\varepsilon})$. Note that the excluded origin in the reference manifold coincides now with a circle of radius $\lim_{r\downarrow0}\rho_0(r)$, which for $f(r,y)=\sin^2(y)$ is $\tfrac{\pi}{2}$.
\medskip

\paragraph{\textbf{Example 2: Sphere with radial perturbations oscillating with the longitude}}
Instead of a graph over $\mathbb{R}^2$ as in the example above we can treat a radially perturbed sphere in the same way. We take an analogous underlying reference manifold
\begin{equation*}
	M_0=\{(\varphi,\theta);\varphi\in(0,\pi),\theta\in[0,2\pi)\}
\end{equation*}
and define the family $M_{\varepsilon}:=h_{\varepsilon}(M_0)$ of $2$-dimensional submanifolds of $\mathbb{R}^3$ via bi-Lipschitz immersions $h_{\varepsilon}\colon M_0\to M_{\varepsilon}$,
\begin{equation*}
	h_{\varepsilon}(\varphi,\theta)
	=\bigl(1+\varepsilon f(\varphi,\tfrac{\theta}{\varepsilon})\bigr)
	\begin{pmatrix}
		\sin\varphi\sin\theta\\
		\sin\varphi\cos\theta\\
		\cos\varphi
	\end{pmatrix},
\end{equation*}
where $f\colon(0,\pi)\times[0,\infty)\to\mathbb{R}$ is differentiable and $2\pi$-periodic in the second argument. In Figure~\ref{Example1:Fig2} in the Introduction we choose $f(r,y)=\sin^2(y)$ to picture $M_{\varepsilon}$ for some values of $\varepsilon$. As in the previous example we obtain the following formulas for the limiting density
\begin{equation*}
	\rho_0(\varphi)
	=\tfrac{1}{2\pi}\int_0^{2\pi}\sqrt{(\partial_2f(\varphi,y))^2+\sin^2\varphi}\,\mathrm{d}y,
\end{equation*}
and the limiting metric
\begin{equation*}
	\LimMetric (\xi,\eta)
	=\tfrac{1}{\rho_0}\mathbb{L}_0=
	\begin{pmatrix}
		1&0\\
		0&\rho_0^2
	\end{pmatrix}
	\xi\cdot\eta.
\end{equation*}
Again we can find a bi-Lipschitz immersion $h_0\colon M_0\to\mathbb{R}^3$ such that $dh_0^{\sf T}dh_0=\rho_0\mathbb{L}_0^{-1}$, namely
\begin{equation*}
	h_0(\varphi,\theta)=
	\begin{pmatrix}
		\rho_0(\varphi)\sin\theta\\
		\rho_0(\varphi)\cos\theta\\
		\int_0^\varphi\sqrt{1-\rho_0'(t)^2}\,\mathrm{d}t
	\end{pmatrix}.
\end{equation*}
Thus the (rotationally symmetric) submanifold $\refmani:=h_0(M_0)$ of $\mathbb{R}^3$, which for the case $f(r,y)=\sin^2(y)$ is pictured in Figure~\ref{Example1:Fig2}, is the spectral limit of the sequence $(M_{\varepsilon})$.


\subsection{Concentric laminate-like coefficient fields on Voronoi tesselated manifolds}\label{Example2}

Let $(M,g,\mu)$ be a $n$-dimensional manifold and $Z\subset M$ a countable closed subset. For $z\in Z$ we denote by $M_z$ the associated Voronoi cell, that is
\begin{equation*}
	M_z:=\{x\in M;d(x,z)<d(x,Z\setminus\{z\})\},
\end{equation*}
where $d(\cdot,\cdot)$ is the geodesic distance on $M$. We assume the Voronoi tessellation to be fine enough to ensure that for $\mu$-a.e.~point $x_0\in M$ there are $z\in Z$ and $\varrho>0$ such that
\begin{equation}\label{ex3.cond}
	\text{for all }x\in B_{\varrho}(x_0)\subset M_z\text{ exists exactly one shortest path }\gamma_x\text{ from }x\text{ to }z.
\end{equation}

We consider a sequence $(\mathbb{L}_{\varepsilon})$ in $\mathcal{M}(M,\lambda,\Lambda)$ of rapidly oscillating coefficient fields of the form $\mathbb{L}_{\varepsilon}(x)=\mathbb{L}(\tfrac{d(x,Z)}{\varepsilon})$, where $\mathbb{L}(r)$ is $1$-periodic in $r\in\R$, see Figure~\ref{Example2:Fig0}.

\begin{figure}[ht]
	\begin{minipage}{0.3\textwidth}
		\centering\includegraphics[width=0.75\textwidth]{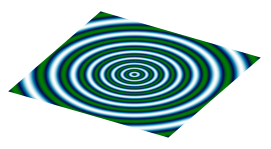}
	\end{minipage}\begin{minipage}{0.3\textwidth}
		\centering\includegraphics[width=0.75\textwidth]{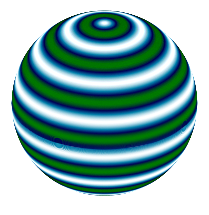}
	\end{minipage}\begin{minipage}{0.3\textwidth}
		\centering\includegraphics[width=0.75\textwidth]{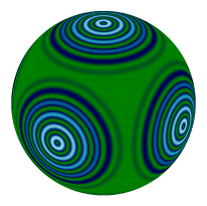}
	\end{minipage}
	\caption{Illustration of coefficient fields with laminate-like structure.}\label{Example2:Fig0}
\end{figure}

By Theorem~\ref{T1} $(\mathbb{L}_{\varepsilon})$ $H$-converges (up to a subsequence) to some $\mathbb{L}_0\in\mathcal{M}(M,\lambda,\Lambda)$. We are going to show that $\mathbb{L}_0$ coincides $\mu$-a.e.~on $M$ with some constant coefficient field which is uniquely determined by $\mathbb{L}$. In particular the whole sequence $(\mathbb{L}_{\varepsilon})$ $H$-converges to $\mathbb{L}_0$.

In order to prove this, it suffices to identify $\mathbb{L}_0$ locally, i.e.~for $\mu$-a.e.~$x_0\in M$. As a first step we construct curvilinear coordinates such that in these coordinates the coefficients locally turn into a laminate up to a small perturbation that vanishes at $x_0$. In particular we claim that local coordinates $(B_{\varrho}(x_0),\Psi;x^1,\dotsc,x^n)$ exist such that
\begin{subequations}
  \begin{align}
    \label{ex3.cond0}&\Psi(x_0)=0,\\
    \label{ex3.cond1}&x^1=d(\cdot,z)-d(x_0,z),\\
    \label{ex3.cond2}&g(\tfrac{\partial}{\partial x^1},\tfrac{\partial}{\partial x^j})=0\text{ for $j=2,\dotsc,n$},\\
    \label{ex3.cond3}&\lim\limits_{x\to x_0}\rho(x)g(\tfrac{\partial}{\partial x^i},\tfrac{\partial}{\partial x^j})(x)=\delta_{ij}.
  \end{align}
\end{subequations}
Indeed, note that by \eqref{ex3.cond1} geodesics through $z$ are mapped to straight lines parallel to the $x^1$-axis.

Therefore,  we fix $x_0\in M$, $z\in Z$ and $\varrho>0$ satisfying \eqref{ex3.cond}. As in \eqref{ex3.cond1} we set for $x\in B_{\varrho}(x_0)$
\begin{equation*}
	x^1(x):=d(x,z)-d(x_0,z).
\end{equation*}
Thanks to \eqref{ex3.cond} $x^1$ is differentiable and the level set $U_{x_0}:=\{x\in B_{\rho}(x_0);x^1(x)=0\}$ is a $n-1$-dimensional submanifold of $M_z$ including $x_0$ and for any point $x\in U_{x_0}$ the tangent space $T_xU_{x_0}$ is orthogonal to $d\gamma_x(0)$, which gives \eqref{ex3.cond2}. Assume $\varrho>0$ to be small enough such that we can choose local normal coordinates $x^2,\dotsc,x^n$ of $U_{x_0}$ with $x^j(x_0)=0$ ($j=2,\dotsc,n$). By the differentiability of geodesics we can extend these coordinate functions to curvilinear coordinates $x^1,\dotsc,x^n$ on $B_{\varrho}(x_0)$ (with a probably smaller $\varrho$) in the way that $x^2,\dotsc,x^n$ are constant on $\gamma_x$ for every $x\in B_{\varrho}(x_0)$. Then we have
\begin{equation}\label{eq:ex3.1}
  \lim\limits_{x\to x_0}g(\tfrac{\partial}{\partial x^i},\tfrac{\partial}{\partial x^j})(x)=
	\begin{cases}
		1,&i=j,\\
		0,&i\neq j,
	\end{cases}
\end{equation}
which yields \eqref{ex3.cond3}.

\begin{figure}[ht]
	\centering
	\begin{tikzpicture}
		\draw[fill] (6,0) circle (1pt) node at (6.2,0.2) {$z$};
		\draw[fill] (0,0) circle (1pt) node at (0.4,-0.1) {$x_0$};
		\draw (0,0) circle (2) node at (1.2,2.1) {$B_{\varrho}(x_0)$};
		\draw (-2.5,-2) to[out=50,in=205] (0,0) to[out=25,in=160] (6,0) to[out=-20,in=140] (6.5,-0.3) node at (3,0.4) {$\gamma_{x_0}$};
		\draw[fill] (-1,0.9) circle (1pt) node at (-1,1.1) {$x$};
		\draw (-2.5,0.5) to[out=20,in=190] (-1,0.9) to[out=10,in=185] (-0.3,1) to[out=5,in=150] (6,0) to[out=-30,in=120] (6.4,-0.4) node at (3,1.3) {$\gamma_x$};
		\draw (2,-2) to[out=145,in=-65] (0,0) to[out=115,in=-85] (-0.3,1) to[out=95,in=260] (-0.2,2.5) node at (2.4,-1.5) {$U_{x_0}$};
		\draw[->] (0,0) to (-0.9063078,-0.4226183) node at (-1,-0.2) {$x^1$};
		\draw[->] (0,0) to (-0.4226183,0.9063078) node at (-0.65,0.7) {$x^i$};
	\end{tikzpicture}
	\caption{Construction of the local coordinates}
\end{figure}
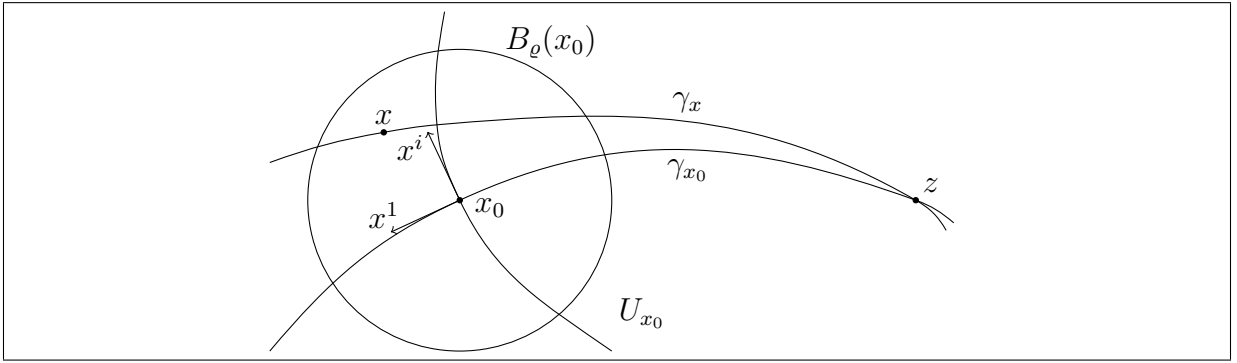

In these coordinates the associated coefficient field at $y\in U:=\Psi(B_{\varrho}(x_0))$ can be written as
\begin{equation*}
	A_{\varepsilon}(y)=A(y,\tfrac{y_1+d(x_0,z)}{\varepsilon})
\end{equation*}
for some $A\colon U\times\mathbb{R}$ continuous in the first, and measurable and $1$-periodic in the second argument. This can be seen by considering \eqref{def:A}: The coefficient field $A_{\varepsilon}$ on $U$ associated to $\mathbb{L}_{\varepsilon}$ takes the form
\begin{equation*}
	(A_{\varepsilon})_{ij}=\overline{\rho}\, \overline{g}(\mathbb{L}_{\varepsilon}\nabla_gx^i,\nabla_gx^j),
\end{equation*}
where $\overline{\rho}:=\rho\circ\Psi^{-1}$ and $\overline{g}:=g\circ\Psi^{-1}$ denote the representation of the quantities in local coordinates.
By the definitions of $\mathbb{L}_{\varepsilon}$ and $x^1$ we see that
\begin{equation*}
	g(\mathbb{L}_{\varepsilon}(x)\tfrac{\partial}{\partial x^i},\tfrac{\partial}{\partial x^j})
	=g(\mathbb{L}(\tfrac{d(x,Z)}{\varepsilon})\tfrac{\partial}{\partial x^i},\tfrac{\partial}{\partial x^j})
	=g(\mathbb{L}(\tfrac{x^1(x)+d(x_0,Z)}{\varepsilon})\tfrac{\partial}{\partial x^i},\tfrac{\partial}{\partial x^j})
\end{equation*}
is only depending on $x^1(x)=y_1$, and $A_{\varepsilon}$ has the desired form with
\begin{equation}\label{eq:ex3.2}
	A_{ij}(y,r):=\overline{\rho}\, \overline{g}(\mathbb{L}(r)\nabla_gx^i,\nabla_gx^j)(y),
\end{equation}
which is continuous in $y\in U$, and measurable and $1$-periodic in $r\in\mathbb{R}$.

For $\varepsilon\to0$ the homogenized matrix $A_{\text{hom}}$ associated with $A_{\varepsilon}$ is given by the homogenization formula \eqref{perhom} for $A$ defined in \eqref{eq:ex3.2}. Therefore $A_{\text{hom}}$ continuously depends on $y\in U$. Moreover the matrix $A_{\text{hom}}(0)$ is independent on the initial choice of $x_0$ and is given by the following weak-$*$ limits in $L^{\infty}(U)$:
\begin{equation*}
	\begin{aligned}
		&\frac{1}{A_{11}(0,\tfrac{\cdot}{\varepsilon})}\rightharpoonup\frac{1}{(A_{\text{hom}})_{11}(0)},\\
		&\frac{A_{i1}(0,\tfrac{\cdot}{\varepsilon})}{A_{11}(0,\tfrac{\cdot}{\varepsilon})}\rightharpoonup\frac{(A_{\text{hom}})_{i1}(0)}{(A_{\text{hom}})_{11}(0)},\quad i=2,\dotsc,n,\\
		&\frac{A_{1j}(0,\tfrac{\cdot}{\varepsilon})}{A_{11}(0,\tfrac{\cdot}{\varepsilon})}\rightharpoonup\frac{(A_{\text{hom}})_{1j}(0)}{(A_{\text{hom}})_{11}(0)},\quad j=2,\dotsc,n,\\
		&A_{ij}(0,\tfrac{\cdot}{\varepsilon})-\frac{A_{i1}(0,\tfrac{\cdot}{\varepsilon})A_{1j}(0,\tfrac{\cdot}{\varepsilon})}{A_{11}(0,\tfrac{\cdot}{\varepsilon})}\rightharpoonup (A_{\text{hom}})_{ij}(0)-\frac{(A_{\text{hom}})_{i1}(0)(A_{\text{hom}})_{1j}(0)}{(A_{\text{hom}})_{11}(0)},\quad i,j=2,\dotsc,n.\\
	\end{aligned}
\end{equation*}

By Lemma \ref{L:periodic}, we have
\begin{equation*}
	(A_{\text{hom}})_{ij}=\overline{\rho}\, \overline{g}(\mathbb{L}_0\nabla_gx^i,\nabla_gx^j).
	\qquad\text{a.e.~in }U,
\end{equation*}
We conclude that $\mathbb{L}_0$ is continuous ($\mu$-a.e.)\ on $B_{\varrho}(x_0)$ and thus (using \eqref{eq:ex3.1}) $g(\mathbb{L}_0(x_0)\tfrac{\partial}{\partial x^i},\tfrac{\partial}{\partial x^j})(x_0)=(A_{\text{hom}})_{ij}(0)$ for $\mu$-a.e.~$x_0\in M$.

As in the previous example we could consider the special case of a diagonal matrix
\begin{equation*}
	\mathbb{L}(r)\tfrac{\partial}{\partial x^i}=a_i(r)\tfrac{\partial}{\partial x^i}\qquad\text{for $i=1,\dotsc,n$}.
\end{equation*}
Then $\mathbb{L}_0(x_0)$ is a diagonal matrix, too, and we have
\begin{equation}\label{Example2:special}
	\begin{aligned}
		&g(\mathbb{L}_0(x_0)\tfrac{\partial}{\partial x^1},\tfrac{\partial}{\partial x^1})(x_0)=\Big(\int_0^1a_1^{-1}\Big)^{-1}\qquad\text{and}\\
		&g(\mathbb{L}_0(x_0)\tfrac{\partial}{\partial x^i},\tfrac{\partial}{\partial x^i})(x_0)=\int_0^1a_i\qquad\text{for $i=2,\dotsc,n$}.
	\end{aligned}
\end{equation}
\medskip

\paragraph{\textbf{Example 3: A radially symmetric corrugated graphical surface}}
We consider the reference manifold
\begin{equation*}
	M_0=\{(r,\theta);r\in(0,R),\theta\in[0,2\pi)\}
\end{equation*}
for some $R>0$, and define a family $M_{\varepsilon}=h_{\varepsilon}(M_0)$ of $2$-dimensional submanifolds of $\mathbb{R}^3$ using uniform bi-Lipschitz immersions $h_{\varepsilon}\colon M_0\to\mathbb{R}^3$,
\begin{equation}\label{Example2:Eq1}
	h_{\varepsilon}(r,\theta)=
	\begin{pmatrix}
		r\sin\theta\\
		r\cos\theta\\
		\varepsilon f(r,\tfrac{r}{\varepsilon})
	\end{pmatrix},
\end{equation}
where $f(0,\infty)\times[0,\infty)\to\mathbb{R}$ is differentiable and $T$-periodic in the second argument. In Figure~\ref{Example2:Fig1} we took $f(r,y)=\sin^2(y)$ to illustrate $M_{\varepsilon}$ for some values of $\varepsilon$.

\begin{figure}[ht]
  \centering
  \begin{minipage}{0.2\textwidth}
    \centering\includegraphics[scale=0.7]{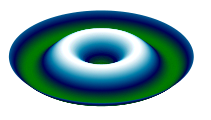}
  \end{minipage}\begin{minipage}{0.2\textwidth}
    \centering\includegraphics[scale=0.7]{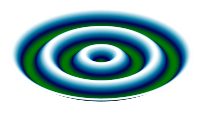}
  \end{minipage}\begin{minipage}{0.2\textwidth}
    \centering\includegraphics[scale=0.7]{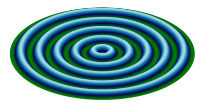}
  \end{minipage}\begin{minipage}{0.2\textwidth}
    \centering$\xrightarrow{\varepsilon\downarrow0}$
  \end{minipage}\begin{minipage}{0.2\textwidth}
    \centering\includegraphics[scale=0.7]{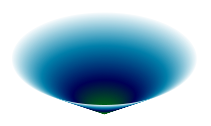}
  \end{minipage}\\\begin{minipage}{0.2\textwidth}
    \centering$\varepsilon=\tfrac{1}{2}$
  \end{minipage}\begin{minipage}{0.2\textwidth}
    \centering$\varepsilon=\tfrac{1}{4}$
  \end{minipage}\begin{minipage}{0.2\textwidth}
    \centering$\varepsilon=\tfrac{1}{8}$
  \end{minipage}\begin{minipage}{0.2\textwidth}
		\centering\mbox{}
  \end{minipage}\begin{minipage}{0.2\textwidth}
    \centering\mbox{}
  \end{minipage}
\caption{A family of rotationally symmetric corrugated graphical surfaces. The three pictures on the left show $M_\ep$ defined via \eqref{Example2:Eq1} with $f=\sin^2$ and decreasing values of $\ep$. The picture on the right shows the limiting surface $N_0$ defined via \eqref{Example2:Eq2}. As $\ep\to 0$ the spectrum of the Laplace-Beltrami operator on $M_{\ep}$ converges to the spectrum of the Laplace-Beltrami operator on $N_0$.}\label{Example2:Fig1}
\end{figure}

Following Lemma~\ref{L:submani} we compute the density
\begin{equation*}
	\rho_{\varepsilon}=\sqrt{\det(dh_{\varepsilon}^{\sf T}dh_{\varepsilon})}
	=\sqrt{r^2+r^2\bigl(\varepsilon\partial_1f(r,\tfrac{r}{\varepsilon})+\partial_2f(r,\tfrac{r}{\varepsilon})\bigr)^2}.
\end{equation*}
and the coefficient field
\begin{align*}
	\mathbb{L}_{\varepsilon}
	&=\rho_{\varepsilon}(dh_{\varepsilon}^{\sf T}dh_{\varepsilon})^{-1}\\
	&=1/\rho_{\varepsilon}
	\begin{pmatrix}
		r^2&0\\
		0&1+\bigl(\varepsilon\partial_1f(r,\tfrac{r}{\varepsilon})+\partial_2f(r,\tfrac{r}{\varepsilon})\bigr)^2
	\end{pmatrix}.
\end{align*}
We find $\rho_{\varepsilon}\stackrel{*}{\rightharpoonup}\rho_0$ weakly-$*$ in $L^{\infty}(M_0)$ with
\begin{equation*}
		\rho_0(r)
		=\tfrac{r}{T}\int_0^T\sqrt{\bigl(\partial_2f(r,y)\bigr)^2+1}\,\mathrm{d}y,
	\end{equation*}
and using \eqref{Example2:special} we see $\mathbb{L}_{\varepsilon}\overset{H}{\to}\mathbb{L}_0$ with
\begin{align*}
	\mathbb{L}_0
	&=
	\begin{pmatrix}
		\Bigl(\tfrac{1}{rT}\int_0^T\sqrt{\bigl(\partial_2f(r,y)\bigr)^2+1}\,\mathrm{d}y\Bigr)^{-1}&0\\
		0&\tfrac{1}{rT}\int_0^T\sqrt{\bigl(\partial_2f(r,y)\bigr)^2+1}\,\mathrm{d}y
	\end{pmatrix}\\
	&=
	\begin{pmatrix}
		\tfrac{r^2}{\rho_0(r)}&0\\
		0&\tfrac{\rho_0(r)}{r^2}
	\end{pmatrix}.
\end{align*}
and get the limiting metric on $M_0$:
\begin{equation*}
	\LimMetric (\xi,\eta)
	=\rho_0\mathbb{L}_0^{-1}\xi\cdot\eta
	=\begin{pmatrix}
			\tfrac{\rho_0(r)^2}{r^2}&0\\
			0&r^2
		\end{pmatrix}
		\xi\cdot\eta.
\end{equation*}
We finally find a bi-Lipschitz immersion $h_0\colon M_0\to\mathbb{R}^3$ such that $dh_0^{\sf T}dh_0=\rho_0\mathbb{L}_0^{-1}$, namely
\begin{equation}\label{Example2:Eq2}
	h_0(r,\theta)=
	\begin{pmatrix}
		r\sin\theta\\
		r\cos\theta\\
		\int_0^r\sqrt{\tfrac{\rho_0(t)^2}{t^2}-1}\,\mathrm{d}t
	\end{pmatrix}.
\end{equation}
By Remark~\ref{SubmanifoldRemark}, the submanifold $\refmani:=h_0(M_0)$ of $\mathbb{R}^3$, which for the case $f(r,y)=\sin^2(y)$ is shown in Figure~\ref{Example2:Fig1}, is the spectral limit of $(M_{\varepsilon})$.
\medskip

\paragraph{\textbf{Example 4: Sphere with radial perturbations oscillating with the latitude}.}
In the same way as in the previous example we can handle the case of a radially perturbed sphere. Again we start with the reference manifold
\begin{equation*}
	M_0=\{(\varphi,\theta);\varphi\in(0,\pi),\theta\in[0,2\pi)\}
\end{equation*}
and define the family $M_{\varepsilon}:=h_{\varepsilon}(M_0)$ of $2$-dimensional submanifolds of $\mathbb{R}^3$ via bi-Lipschitz immersions $h_{\varepsilon}\colon\overline{M}\to M_{\varepsilon}$,
\begin{equation*}
	h_{\varepsilon}(\varphi,\theta)
	=\bigl(1+\varepsilon f(\varphi,\tfrac{\varphi}{\varepsilon})\bigr)
	\begin{pmatrix}
		\sin\varphi\sin\theta\\
		\sin\varphi\cos\theta\\
		\cos\varphi
	\end{pmatrix},
\end{equation*}
where $f\colon(0,\pi)\times[0,\infty)\to\mathbb{R}$ is differentiable and $2\pi$-periodic in the second argument. In Figure~\ref{Example2:Fig2} in the Introduction we choose $f(r,y)=\sin^2(y)$ to picture $M_{\varepsilon}$ for some values of $\varepsilon$.

Doing the same calculations as in the example above we end up with the density
\begin{equation*}
	\rho_0(\varphi)
	=\tfrac{\sin\varphi}{\pi}\int_0^{\pi}\sqrt{(\partial_2f(\varphi,y))^2+1}\,\mathrm{d}y,
\end{equation*}
and the metric
\begin{equation*}
	\tfrac{1}{\rho_0}\mathbb{L}_0=
	\begin{pmatrix}
	\tfrac{\sin^2\varphi}{\rho_0(\varphi)^2}&0\\
		0&\tfrac{1}{\sin^2\varphi}
	\end{pmatrix}.
\end{equation*}
and again we find a bi-Lipschitz immersion $h_0\colon M_0\to\mathbb{R}^3$ such that $dh_0^{\sf T}dh_0=\rho_0\mathbb{L}_0^{-1}$, namely
\begin{equation*}
	h_0(\varphi,\theta)=
	\begin{pmatrix}
		\sin\varphi\sin\theta\\
		\sin\varphi\cos\theta\\
		\int_0^\varphi\sqrt{\tfrac{\rho_0(t)^2}{\sin^2t}-\cos^2t}\,\mathrm{d}t
	\end{pmatrix}.
\end{equation*}
Thus the submanifold $\refmani:=h_0(M_0)$ of $\mathbb{R}^3$, which for the case $f(r,y)=\sin^2(y)$ is pictured in Figure~\ref{Example2:Fig2}, is the spectral limit of the sequence $(M_{\varepsilon})$.

\paragraph{\textbf{Example 5: A locally corrugated graphical surface}}

We finally want to discuss an example with oscillations in several Voronoi cells which can be treated locally.

Let $Y\subset\mathbb{R}^2$ be relatively-compact and open. Consider a set $Z\in Y$ of isolated points. For every point $z\in Z$ we use a smooth function $\psi_z\colon[0,\infty)\to[0,1]$ to define a rotationally symmetric cut-off function $\psi_z(|\cdot-z|)$ such that
\begin{equation*}
	\begin{cases}
		\text{$\psi_z(0)=1$,}&\\
		\text{$\supp\psi_z(|\cdot-z|)\cap\supp\psi_{z'}(|\cdot-z'|)=\emptyset$ for all $z'\in Z\setminus\{z\}$.}
	\end{cases}
\end{equation*}
Now we consider a smooth $T$-periodic function $f\colon[0,\infty)\to\mathbb{R}$ and define $M_{\varepsilon}$ as the graph of the function $h_{\varepsilon}\colon M_0:=Y\setminus Z\to\mathbb{R}$,
\begin{equation*}
  h_{\varepsilon}(x):=\sum_{z\in Z}\varepsilon f\big(\tfrac{|x-z|}{\varepsilon}\big)\psi_z(|x-z|)\in\mathbb{R}^3,
\end{equation*}
which we regard as a two-dimensional submanifold of $\R^3$. In Figure~\ref{Example2:Fig3} in the Introduction we took $f(y)=\sin^2(y)$ to show $M_\varepsilon$ for some values of $\varepsilon$.

Doing the same calculations as in the previous examples locally in each Voronoi cell we get a function $h_0\colon M_0\to\mathbb{R}$,
\begin{equation*}
  h_0(x):=x\mapsto\sum_{z\in Z}\int_0^{|x-z|}\sqrt{\tfrac{\rho_{0,z}(t)^2}{t^2}-1}\,\mathrm{d}t\in\mathbb{R}^3,
\end{equation*}
where $\rho_{0,z}(r)=\tfrac{r}{T}\int_0^{T}\sqrt{f'(y)^2\psi_z(r)^2+1}\,\mathrm{d}y$, such that the graph of $h_0$, which is shown in Figure~\ref{Example2:Fig3} for $f(y)=\sin^2(y)$, is the spectral limit of $(M_{\varepsilon})$.



\section{Proofs}\label{SecProofs}

\subsection{Proof of Proposition~\ref{P1}, Lemma~\ref{L:local}  and Lemma~\ref{lemB}}\label{S:pP1}
The argument consists of two parts. In the first part we identify the limiting tensor field $\mathbb L_0$. For this purpose, we consider the operators 
\begin{equation}\label{P1:eq1}
 \cL_{\ep}^*\colon H^1_0(B)\to H^{-1}(B),\qquad  \cL_{\ep}^*u:=-\Div(\mathbb{L}_{\ep}^*\nabla),
\end{equation}
where $\mathbb L_\ep^*$ denotes the adjoint of $\mathbb L_\ep$ and is defined by the identity $(\mathbb L_\ep^*\xi,\eta)=(\xi,\mathbb L_\ep\eta)$ for all vector fields $\xi,\eta$. Since the operator is uniformly elliptic (with constants independent of $\varepsilon$) we can deduce the existence of a linear isomorphism $\cL_0^*$, whose inverse is the limit of $(\cL_\ep^*)^{-1}$ in the weak operator topology. Indeed, this follows from the following standard compactness result:
\begin{lemma}\label{lem3}
Let $V$ be a reflexive separable Banach space and $(T_{\ep})$ be a sequence of linear operators $T_{\ep}\colon V\to V^{\prime}$ that is uniformly bounded and coercive, i.e. there exists $C>0$ (independent of $\ep$) such that the operator norm of $T_{\ep}$ is bounded by $C$ and
\begin{equation} \label{eq6.0708}
	\langle T_{\ep}v,v\rangle_{V^{\prime},V}\ge\tfrac{1}{C}\|v\|_V^2\qquad\text{for all }v\in V.
\end{equation}
Then there exists a linear bounded operator $T_0\colon V\to V'$ satisfying $\eqref{eq6.0708}$ and  for a subsequence (not relabeled) we have $T_{\ep}^{-1}\wto T_0^{-1}$ in the weak operator topology, that is for all $f\in V^{\prime}$ we have
\begin{equation*}
	T_{\ep}^{-1}f\rightharpoonup T_0^{-1}f\qquad\text{weakly in }V.
\end{equation*}
\end{lemma}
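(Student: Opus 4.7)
The plan is to build $T_0$ by first identifying its inverse as the weak operator limit of $(T_\ep^{-1})$, and then verifying that this limit is invertible. By the Lax--Milgram lemma each $T_\ep$ is invertible with $\|T_\ep^{-1}\|_{V'\to V}\le C$. Since $V$ is reflexive and separable so is $V'$; fixing a countable dense subset of $V'$ and applying a standard diagonal extraction (using that bounded sequences in the reflexive space $V$ admit weakly convergent subsequences) yields a subsequence (not relabeled) and a bounded linear operator $S\colon V'\to V$ with $\|S\|\le C$ such that $T_\ep^{-1}f\wto Sf$ weakly in $V$ for every $f\in V'$.

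For $u_\ep:=T_\ep^{-1}f$, coercivity applied to $T_\ep u_\ep=f$ gives $\|u_\ep\|_V^2\le C\langle f,u_\ep\rangle$, while boundedness of $T_\ep$ gives $\|u_\ep\|_V\ge\tfrac{1}{C}\|f\|_{V'}$. Combining the two and passing to the limit (recall $u_\ep\wto Sf$ with $f$ fixed) yields
\[
\langle f,Sf\rangle\ge\tfrac{1}{C^3}\|f\|_{V'}^2,\qquad \|Sf\|_V\ge\tfrac{1}{C^3}\|f\|_{V'},
\]
so $S$ is injective with closed range.

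The adjoints $T_\ep^*$ satisfy identical bounds to $T_\ep$. Applying the preceding construction to $(T_\ep^*)$ along a further subsequence produces a bounded operator $\widetilde S\colon V'\to V$ with $(T_\ep^*)^{-1}g\wto\widetilde Sg$ for all $g\in V'$. For $v_\ep:=(T_\ep^*)^{-1}g$ the duality identity $\langle T_\ep u_\ep,v_\ep\rangle=\langle T_\ep^* v_\ep,u_\ep\rangle$ reduces to $\langle f,v_\ep\rangle=\langle g,u_\ep\rangle$; passing to the limit gives $\langle f,\widetilde Sg\rangle=\langle g,Sf\rangle$ for all $f,g\in V'$, i.e.\ $\widetilde S$ is the adjoint of $S$ under the reflexive identification. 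The same injectivity argument applied to $\widetilde S$ then shows $\widetilde S$ is injective, which is equivalent to $S$ having dense range. Combined with closed range, $S$ is a linear isomorphism. Setting $T_0:=S^{-1}\colon V\to V'$, the operator is bounded by the lower bound on $S$, and substituting $f=T_0 v$ into $\langle f,Sf\rangle\ge\tfrac{1}{C^3}\|f\|_{V'}^2$ together with $\|T_0 v\|_{V'}\ge\tfrac{1}{C}\|v\|_V$ yields the required coercivity of $T_0$ with a constant depending only on $C$. By construction $T_\ep^{-1}f\wto Sf=T_0^{-1}f$ for all $f\in V'$.

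The main obstacle is surjectivity of $S$. A direct attempt, fixing $v\in V$, setting $f_\ep:=T_\ep v$, extracting a weak limit $f_0\in V'$ and trying to show $Sf_0=v$, gets stuck on a product of two weakly convergent sequences when applying coercivity to $T_\ep^{-1}(f_0-f_\ep)$. The adjoint detour above sidesteps this by replacing surjectivity of $S$ with injectivity of its adjoint, which is handled by the same coercivity argument as in the second paragraph.
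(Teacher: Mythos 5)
The paper does not prove this lemma itself---it quotes it from \cite[Proposition~4]{MT}---and your argument is essentially the standard one behind that reference: diagonal extraction of a weak-operator limit $S$ of $(T_\ep^{-1})$ using separability and reflexivity, a coercivity estimate showing $S$ is bounded below (hence injective with closed range), and the same construction for the adjoints $(T_\ep^*)^{-1}$ together with the duality identity $\langle f,(T_\ep^*)^{-1}g\rangle=\langle g,T_\ep^{-1}f\rangle$ to identify $\widetilde S$ as the adjoint of $S$ and deduce dense range; all of these steps are sound, and setting $T_0:=S^{-1}$ gives the claimed weak-operator convergence. Two small remarks. First, invoking ``Lax--Milgram'' for the invertibility of $T_\ep$ on a reflexive Banach space is slightly loose (Lax--Milgram is a Hilbert-space statement), but the Banach-space fact is true and is proved by exactly the bounded-below/adjoint-injective argument you later apply to $S$, so nothing is missing in substance. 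Second, as written you only obtain $\langle T_0v,v\rangle\ge C^{-5}\|v\|_V^2$, whereas the lemma asserts \eqref{eq6.0708} with the original constant; this is repaired in one line by using weak lower semicontinuity of the norm instead of the detour through $\|f\|_{V'}$: from $\langle f,u_\ep\rangle=\langle T_\ep u_\ep,u_\ep\rangle\ge\tfrac{1}{C}\|u_\ep\|_V^2$ and $u_\ep\wto Sf$ one gets $\langle f,Sf\rangle\ge\tfrac{1}{C}\|Sf\|_V^2$, which for $v=Sf$, $f=T_0v$ is precisely $\langle T_0v,v\rangle\ge\tfrac{1}{C}\|v\|_V^2$.
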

(For a proof, e.g., see~\cite[Proposition~4]{MT}). %
We then show that $\cL_0^*$ can in fact be written in divergence form: $\cL_0^*=-\Div(\mathbb L_0^*\nabla)$ with an appropriate $(1,1)$-tensor field $\mathbb L_0^*$. In order to define $\mathbb L_0^*$ with help of $\cL_0^*$, we introduce auxiliary functions whose gradients span the tangent space. More precisely, we recall the following fact:
\begin{remark}\label{R:vs}
  Let $B\Subset M$ denote an open ball with radius smaller than the injectivity radius at its center. Then there exist $v_1,\dotsc, v_n\in C^\infty_c(B)$ such that $T(\frac12B)$ is spanned by the vector fields $\nabla v_1,\dotsc,\nabla v_n$, i.e.
  \begin{equation}\label{L:local:eq1}
    \forall y\in\tfrac{1}{2}B\,:\qquad T_y(\tfrac{1}{2}B)=\operatorname{span}\{\nabla v_1(y),\dotsc,\nabla v_n(y)\}.
  \end{equation}
\end{remark}
Following ideas of Tartar and Murat, we associate with $v_1,\ldots, v_n$ oscillating test-functions $v_{1,\ep},\ldots, v_{n,\ep}$ that allow to pass to the limit in products of weakly convergent sequences of the form $(\mathbb L_\ep \nabla u_\ep,\nabla v_{i,\ep})$. The argument invokes the following variant of the Div-Curl Lemma for manifolds:
\begin{lemma}[Div-Curl Lemma]\label{lemC}
  Let $\Omega\subset M$ be open and let $(\xi_{\ep})\subset L^2(T\Omega)$, $(v_{\ep})\subset H^1(\Omega)$ denote sequences such that
  \begin{equation*}
    \begin{cases}
      \xi_{\ep}\rightharpoonup\xi&\text{weakly in }L^2(T\Omega),\\
      \Div\xi_{\ep}\to \Div\xi&\text{in }H^{-1}(\Omega),\\
    \end{cases}
    \qquad\text{and}\qquad
    v_{\ep}\rightharpoonup v
    \quad\text{weakly in }H^1(\Omega).
  \end{equation*}
  Then
  \begin{equation*}
    \int_{\Omega}(\xi_{\ep},\nabla v_{\ep})\varphi\,\mathrm{d}\mu\to\int_{\Omega}(\xi,\nabla v)\varphi\,\mathrm{d}\mu\qquad\text{for all }\varphi\in C^\infty_c(\Omega).
  \end{equation*}
  Moreover, if $v_\ep,v\in H^1_0(\Omega)$, then
  \begin{equation*}
    \int_{\Omega}(\xi_{\ep},\nabla v_{\ep})\,\mathrm{d}\mu\to\int_{\Omega}(\xi,\nabla v)\,\mathrm{d}\mu.
  \end{equation*}
\end{lemma}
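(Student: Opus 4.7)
The plan is to bypass the Fourier-analytic argument of the classical Euclidean proof and instead use an integration-by-parts identity that turns the product $(\xi_\ep,\nabla v_\ep)$ into a strong-weak duality pairing of $H^{-1}(\Omega)$ against $H^1_0(\Omega)$, modulo a lower-order remainder that is handled by Rellich compactness. For the first (local) statement, fix $\varphi\in C^\infty_c(\Omega)$ and note that $v_\ep\varphi$ is supported in $\supp\varphi\Subset\Omega$ and hence belongs to $H^1_0(\Omega)$. From the Leibniz identity $(\xi_\ep,\nabla(v_\ep\varphi))=\varphi(\xi_\ep,\nabla v_\ep)+v_\ep(\xi_\ep,\nabla\varphi)$ and the definition of the distributional divergence one obtains
\begin{equation*}
  \int_\Omega(\xi_\ep,\nabla v_\ep)\varphi\,\mathrm d\mu
  =\langle -\Div\xi_\ep,\, v_\ep\varphi\rangle_{H^{-1}(\Omega),H^1_0(\Omega)}
  -\int_\Omega v_\ep(\xi_\ep,\nabla\varphi)\,\mathrm d\mu,
\end{equation*}
and I would pass to the limit in each summand separately.

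For the first summand, I would verify that $v_\ep\varphi\rightharpoonup v\varphi$ weakly in $H^1_0(\Omega)$. Writing $\nabla(v_\ep\varphi)=\varphi\nabla v_\ep+v_\ep\nabla\varphi$, the first term converges weakly in $L^2(T\Omega)$ since $\nabla v_\ep\rightharpoonup\nabla v$, and the second converges strongly in $L^2(T\Omega)$ thanks to the Rellich compactness of the embedding $H^1\hookrightarrow L^2$ on a relatively compact neighbourhood of $\supp\varphi$, which gives $v_\ep\to v$ strongly in $L^2$ on $\supp\varphi$. Combined with the strong convergence $-\Div\xi_\ep\to-\Div\xi$ in $H^{-1}(\Omega)$, the strong-weak pairing converges to $\langle -\Div\xi,\, v\varphi\rangle$. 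For the second summand, the same Rellich-based strong convergence $v_\ep\nabla\varphi\to v\nabla\varphi$ in $L^2(T\Omega)$ paired with the weak convergence $\xi_\ep\rightharpoonup\xi$ in $L^2(T\Omega)$ yields the limit $\int_\Omega v(\xi,\nabla\varphi)\,\mathrm d\mu$. Reversing the integration-by-parts identity on the limit side then reassembles the right-hand side into $\int_\Omega(\xi,\nabla v)\varphi\,\mathrm d\mu$, which is the desired conclusion.

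The second (global) statement requires no cut-off: when $v_\ep,v\in H^1_0(\Omega)$, the function $v_\ep$ itself is admissible in the $H^{-1}$-$H^1_0$ pairing, so
\begin{equation*}
  \int_\Omega(\xi_\ep,\nabla v_\ep)\,\mathrm d\mu
  =\langle -\Div\xi_\ep,\, v_\ep\rangle_{H^{-1}(\Omega),H^1_0(\Omega)},
\end{equation*}
and strong convergence of $(-\Div\xi_\ep)$ in $H^{-1}(\Omega)$ paired with weak convergence of $(v_\ep)$ in $H^1_0(\Omega)$ directly yields the claim.

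The argument is largely routine once the integration-by-parts identity is in place; the only delicate point is ensuring that Rellich compactness is available in the weighted manifold setting. Since $\supp\varphi\Subset\Omega$, one can cover it by finitely many coordinate charts in which both the Riemannian metric $g$ and the density $\sigma$ of $\mu$ against the Riemannian volume are bounded above and away from zero. The classical Euclidean compactness of $H^1(U)\hookrightarrow L^2(U)$ on bounded Lipschitz domains then transfers locally without additional geometric hypotheses, so no further input is needed beyond the standing smoothness assumptions on $(M,g,\mu)$.
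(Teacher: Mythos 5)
Your proof is correct and follows essentially the same route as the paper: the same cut-off/integration-by-parts identity splitting $\int_\Omega(\xi_\ep,\nabla v_\ep)\varphi$ into the strong-weak pairing $\langle-\Div\xi_\ep,v_\ep\varphi\rangle$ plus a remainder handled by local Rellich compactness, and the same direct pairing argument in the $H^1_0$ case. Your version is only marginally more explicit (e.g.\ verifying $v_\ep\varphi\rightharpoonup v\varphi$ in $H^1_0(\Omega)$ and justifying Rellich via charts), which is fine.
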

We present the short proof for the reader's convenience:
\begin{proof}[Proof of Lemma~\ref{lemC}]
  In the case $v_\ep\in H^1_0(\Omega)$ the statement follows by an integration by parts. In the general case, for $\varphi\in C_c^{\infty}(\Omega)$ we have
  \begin{equation}\label{eq6_0909}
    \int_{\Omega}(\xi_{\ep},\nabla v_{\ep})\varphi
	=\int_{\Omega}(\xi_{\ep},\nabla (v_{\ep}\varphi))-\int_{\Omega}(\xi_{\ep},v_{\ep}\nabla\varphi)
	=-\langle \Div\xi_{\ep},v_{\ep}\varphi\rangle-\int_{\Omega}(\xi_{\ep},v_{\ep}\nabla\varphi).
\end{equation}
Regarding the first term of the right-hand side of \eqref{eq6_0909},
\begin{equation*}
  -\langle \Div\xi_{\ep},v_{\ep}\varphi\rangle
	\to-\langle \Div\xi,v\varphi\rangle
	=\int_{\Omega}(\xi,v\nabla \varphi)+\int_{\Omega}(\xi,\varphi \nabla v).
\end{equation*}
For the second term of the right-hand side of \eqref{eq6_0909}, since $v_{\ep}\rightharpoonup v$ in $H^1(\Omega)$, 
for any relatively compact open set $\Omega^{\prime}\subset M$, 
there exists a subsequence of $(v_{\ep})$ converging to $v$ in $L^2(\Omega^{\prime})$ by Rellich's theorem; 
in particular, $v_{\ep}\nabla\varphi\to v\nabla\varphi$ in $L^2(TM)$ and thus $\int_{\Omega}(\xi_{\ep},v_{\ep}\nabla\varphi)\to\int_{\Omega}(\xi,v\nabla\varphi)$. 
Hence, the right-hand side of \eqref{eq6_0909} converges to $\int_{\Omega}(\xi,\nabla v)\varphi$.
\end{proof}

In a second step, we then show that $\mathbb L_0$ (the adjoint of $\mathbb L_0^*$) is an $H$-limit of $(\mathbb L_\ep)$. To that end we need to consider for (arbitrary but fixed) subdomains $\omega\Subset\Omega$ the localized operators 
\begin{equation}\label{P1:eq2}
  \cL_{\ep}\colon H^1_0(\omega)\to
  H^{-1}(\omega),\qquad \cL_\ep u:=-\Div(\mathbb{L}_{\ep}\nabla u),
\end{equation}
and show that $\cL_\ep^{-1}\to\cL_0^{-1}$ in the weak operator topology.
\begin{proof}[Proof of Proposition~\ref{P1}]
  In the proof we pass to various subsequences and it turns out to be necessary to keep track of them. For a lean notation we denote by $E\subset (0,\infty)$ the set of $\ep$'s of the given sequence $(\mathbb L_\ep)=(\mathbb L_\ep)_{\ep\in E}$. We represent subsequences by means of subsets $E',E'',\dotsc\subset E$ that have a cluster point at $0$. We follow the convention to write
  \begin{equation*}
    c_{\ep}\to c_0\qquad (\ep\in E'),
  \end{equation*}
  if and only if for any sequence $(\ep_j)_{j\in\mathbb N}\subset E'$ with $\ep_j\to 0$ we have $c_{\ep_j}\to c_0$.
  \smallskip

  \textit{Step 1.} Choice of the subsequence and definition of $\mathbb L_0$.
\smallskip

Let $\cL_\ep^*$ be defined by \eqref{P1:eq1} and fix $v_1,\dotsc, v_n\in C^\infty_c(B)$ according to Remark~\ref{R:vs}.  We claim that there exits a measurable $(1,1)$-tensor field
$\mathbb{L}_0\colon\tfrac{1}{2}B\to\operatorname{Lin}(T(\tfrac{1}{2}B))$,
a subsequence $E'\subset E$, and functions
  $(v_{1,\ep}),\dotsc,(v_{k,\ep})\subset H^1_0(B)$ (the so called
  oscillating test functions) such that for $k=1,\dotsc,n$ and $\ep\in
  E'$ we have
  \begin{equation}\label{eq110922}
    \begin{cases}
      v_{k,\ep}\rightharpoonup v_k&\quad\mbox{weakly in }H^1_0(B),\\
      v_{k,\ep}\to v_k&\quad\mbox{in }L^2(B),\\
      (\cL_{\ep}^*v_{k,\ep})&\quad\mbox{strongly converges in }H^{-1}(B),\\
      \mathbb{L}^*_{\ep}\nabla v_{k,\ep}\wto\mathbb{L}^*_0\nabla v_k&\quad\mbox{weakly
        in }L^2(T(\tfrac{1}{2}B)).
    \end{cases}
  \end{equation}
  For the argument note that by uniform ellipticity of
  $\mathbb{L}_{\ep}^*$ and the boundedness of $B$, there exists
  $C=C(B,\lambda)>0$ such that
  \begin{equation*}
    \langle\cLe^*u,u\rangle=\int_B(\mathbb{L}_{\ep}^*\nabla u,\nabla u)\ge C\|u\|^2_{H^1(B)}, 
  \end{equation*}
  and thus by Lemma~\ref{lem3} there is $\cL_0^*\colon H^1_0(B)\to
  H^{-1}(B)$ and a subsequence $E''\subset E$ such that for all $f\in
  H^{-1}(B)$ and $\ep\in E''$
  \begin{equation*}
    (\mathcal{L}_{\ep}^{*})^{-1}f\rightharpoonup(\mathcal{L}_0^*)^{-1}f\qquad\mbox{weakly in }H^1_0(B).
  \end{equation*}
  For $k=1,\dotsc,n$ define
  \begin{equation*}
    v_{k,\ep}:=(\mathcal{L}_{\ep}^*)^{-1}\mathcal{L}_0^*v_k,
  \end{equation*}
  which by uniform ellipticity of $\mathbb{L}_{\ep}^*$ and
  Poincar{\'{e}}'s inequality in $H^1_0(B)$ are bounded uniformly in
  $\ep$. Hence there exits vector fields $\ell_1,\dotsc,\ell_n\in L^2(TB)$ and
  another subsequence $E'\subset E''$ such that we have for $\ep\in
  E'$
  \begin{equation*}
    \begin{cases}
      v_{k,\ep}\rightharpoonup v_k&\quad\mbox{weakly in }H^1_0(B),\\
      v_{k,\ep}\to v_k&\quad\mbox{in }L^2(B),\\
      \mathbb{L}_{\ep}^*\nabla v_{k,\ep}\wto\ell_k&\quad\mbox{weakly in
      }L^2(TB).
    \end{cases}
  \end{equation*}
  Next, we define the tensor field $\mathbb L_0^*$ by the identity
  \begin{equation*}
    \forall k\in\{1,\ldots,n\}\,:\qquad \mathbb{L}_0^*\nabla v_k=\ell_k\qquad\mu\text{-a.e.~in }\tfrac{1}{2}B.
  \end{equation*}
  Indeed, since $\nabla v_1,\dotsc,\nabla v_n$ span $T(\tfrac{1}{2}B)$ the above identity defines $\mathbb{L}_0^*$ uniquely and the last identity in
  \eqref{eq110922} is satisfied by construction. It remains to check the strong convergence of
  $(\mathcal{L}^*_{\ep}v_{k,\ep})$. In fact the stronger statement
  $\mathcal{L}^*_{\ep}v_{k,\ep}=\mathcal{L}_0^*v_k$ is valid, which is
  a direct consequence of the definition of $v_{k,\ep}$.  \medskip

  \textit{Step 2.} $H$-convergence of $\mathbb L_\ep$ to $\mathbb{L}_0$ in $\frac{1}{2}B$.
\smallskip

  Let the subsequence $E'$, the tensor field $\mathbb{L}_0$, and $(v_{k,\ep})$ be defined as in Step 1. We
  claim that $(\mathbb{L}_{\ep})$ $H$-converges to $\mathbb{L}_0$ in
  $\tfrac{1}{2}B$ for $\ep\in E'$. To that end let
  $\omega\Subset\tfrac{1}{2}B$ and let $\mathcal{L}_{\ep}$ be defined by \eqref{P1:eq2}. Arguing as in the previous step, we can find
  another subsequence $E''\subset E'$ and a bounded linear, coercive
  operator $\mathcal{L}_0\colon H^1_0(\omega)\to H^{-1}(\omega)$ such
  that
  \begin{equation}\label{P1:1.2:eq1}
    \mathcal{L}_{\ep}^{-1}\wto\mathcal{L}_0^{-1}\qquad\text{in the weak operator topology for }\ep\in E''.
  \end{equation}
  We only need to show that 
  \begin{equation}\label{P1:1.2:eq2}
    \mathcal{L}_0 u_0=-\Div(\mathbb{L}_0\nabla u_0),
  \end{equation}
  for arbitrary $u_0\in H^1_0(\omega)$. For the argument  set
  $u_{\ep}:=\mathcal{L}_{\ep}^{-1}\mathcal{L}_0u_0$ so that by
  \eqref{P1:1.2:eq1},
  \begin{equation}\label{P1:1.2:eq5}
    u_\ep\wto u_0\qquad\text{weakly in }H^1_0(\omega)\text{ and strongly in $L^2(\omega)$}\text{ for }\ep\in E''.
  \end{equation}
  Consider $J_{\ep}:=\mathbb{L}_{\ep}\nabla u_{\ep}$. By uniform ellipticity
  of $\mathbb{L}_{\ep}$ the sequences $(J_{\ep})$ is bounded in
  $L^2(T\omega)$. Hence, there exits $J_0\in L^2(T\omega)$ and
  another subsequence $E'''\subset E''$ such that
  \begin{equation}\label{P1:1.2:eq4}
    J_{\ep}=\mathbb{L}_{\ep}\nabla u_{\ep}\wto J_0\qquad\text{weakly in }L^2(T\omega)\text{ for }\ep\in E'''.
  \end{equation}
  Combined with the identity $-\Div J_{\ep}=\mathcal{L}_0u_0$ (which
  follows from the definition of $u_{\ep}$) we find that
  \begin{equation}\label{P1:1.2:eq3}
    -\Div J_0=\mathcal{L}_0u_0.
  \end{equation}
  Hence, for any test function $\varphi\in C^{\infty}_c(\omega)$, the
  convergence properties of $(v_{k,\ep})$ yield
  \begin{align*}
    \int_{\omega}(J_{\ep},\varphi \nabla v_{k,\ep})
    &=\int_{\omega}(J_{\ep},\nabla(\varphi v_{k,\ep}))-\int_{\omega}(J_{\ep},v_{k,\ep}\nabla\varphi)\\
    &=\langle\mathcal{L}_0u_0,\varphi v_{k,\ep}\rangle-\int_{\omega}(J_{\ep},v_{k,\ep}\nabla\varphi)\\
    &\to\langle\mathcal{L}_0u_0,\varphi v_k\rangle-\int_{\omega}(J_0,v_k\nabla\varphi)\\
    &=\int_{\omega}(J_0,\varphi \nabla v_k).
  \end{align*}
  On the other hand, since $\mathbb{L}^*_{\ep}\nabla v_{k,\ep}\rightharpoonup\mathbb{L}^*_0\nabla v_k$
  weakly in $L^2(T\tfrac{1}{2}B)$ and
  $(-\Div(\mathbb{L}^*_{\ep}\nabla v_{k,\ep}))$ strongly converges in
  $H^{-1}(\frac12 B)$ by \eqref{eq110922}, the Div-Curl Lemma
  (Lemma~\ref{lemC}) yields
  \begin{equation*}
    \int_\omega (J_\ep ,\varphi \nabla v_{k,\ep})
    =\int_\omega (\varphi \nabla u_{\ep},\mathbb L_\ep^*\nabla v_{k,\ep})
    \to\int_\omega (\varphi \nabla  u_0, \mathbb{L}^*_0\nabla v_{k})
    =\int_\omega (\mathbb L_0\nabla u_0, \varphi \nabla v_{k}).
  \end{equation*}
  Hence, by combining the previous two identities we conclude that
  \begin{equation*}
    \int_\omega (\mathbb L_0\nabla  u_0, \varphi \nabla v_{k})
    =\int_\omega (J_0, \varphi \nabla v_{k}).
  \end{equation*}
  Since $\varphi\in C^\infty_c(\omega)$ is arbitrary and since
  $\nabla v_1,\dotsc, \nabla v_n$ spans $T\omega$, we get
  $J_0=\mathbb{L}_0\nabla u_0$ $\mu$-a.e.\ in $\omega$. Thus
  \eqref{P1:1.2:eq2} follows from \eqref{P1:1.2:eq3}. Moreover, since
  $J_0$ and $\mathcal L_0$ are uniquely determined by $\mathbb{L}_0$,
  the convergence in \eqref{P1:1.2:eq1}, \eqref{P1:1.2:eq5}, and
  \eqref{P1:1.2:eq4} holds for the entire sequence $E'$ (which in
  particular is independent of $\omega$).

  Next we argue that $\mathbb{L}_0\in\mathcal{M}(\omega,\lambda,\Lambda)$. Indeed, from
  \eqref{P1:1.2:eq5} and \eqref{P1:1.2:eq4} and the Div-Curl Lemma (Lemma~\ref{lemC}) we
  learn that for any non-negative $\varphi\in C^\infty_c(\omega)$ we have
  \begin{equation*}
    \int_{\omega}(\mathbb{L}_{\ep}\nabla u_{\ep},\nabla u_{\ep})\varphi\to\int_{\omega}(\mathbb{L}_0\nabla u_0,\nabla u_0)\varphi.
  \end{equation*}
  By uniform ellipticity of $\mathbb{L}_{\ep}$ in form of
  \eqref{ellipt1}, we have
  $\int_{\omega}(\mathbb{L}_{\ep}\nabla u_{\ep},\nabla u_{\ep})\rho\geq\lambda\int_{\omega}|\nabla u_{\ep}|^2\rho$,
  and thus
  \begin{equation*}
    \int_{\omega}(\mathbb{L}_0\nabla u_0,\nabla u_0)\varphi\geq\lambda\int_{\omega}|\nabla u_0|^2\varphi.
  \end{equation*}
  Since this is true for all $u_0$ and $\varphi$, we conclude that
  $\mathbb{L}_0$ satisfies the lower ellipticity condition,
  cf.~\eqref{ellipt1} $\mu$-a.e.\ in $\omega$. On the other hand
  \eqref{ellipt2} implies
  \begin{equation*}
    \int_{\omega}(\mathbb{L}_{\ep}\nabla u_{\ep},\nabla u_{\ep})\varphi=\int_{\omega}(\mathbb{L}_{\ep}\nabla u_{\ep},\mathbb{L}_{\ep}^{-1}\mathbb{L}_{\ep}\nabla u_{\ep})\varphi\geq\Lambda\int_{\omega}|\mathbb{L}_{\ep}\nabla u_0|^2\varphi,
  \end{equation*}
  and thus by the same reasoning as before, we get for $\mu$-a.e.\
  $x\in\omega$ and all $\xi\in T_x\omega$
  \begin{equation*}
    \Lambda|\mathbb{L}_0(x)\xi|^2\leq(\mathbb{L}_0(x)\xi,\xi).
  \end{equation*}
  Substituting $\xi=\mathbb{L}_0^{-1}(x)\xi'$ yields the boundedness
  condition, cf.~\eqref{ellipt2}.

  Since the above arguments hold for arbitrary
  $\omega\Subset\tfrac{1}{2}B$ we deduce that
  $\mathbb{L}_0\in\mathcal{M}(\frac{1}{2}B,\lambda,\Lambda)$ and that
  $(\mathbb{L}_{\ep})$ $H$-converges to $\mathbb{L}_0$ in
  $\frac{1}{2}B$ for $\ep\in E'$.
\end{proof}

Next we present the proof of the auxiliary statements Lemma~\ref{L:local} and Lemma~\ref{lemB}.
\begin{proof}[Proof of Lemma~\ref{L:local}]
\textit{Step 1: Proof of part (a).}
\smallskip

Let $x\in\omega$ and denote by $B\Subset\omega$ an open ball centered at $x$ and with a radius 
that is smaller than the injectivity radius of $\Omega$ at $x$. Fix $v_1,\dotsc,v_n\in C^\infty_c(B)$ according to Remark~\ref{R:vs}. For $k\in\{1,\dotsc,n\}$ set $f\in H^{-1}(B)$ by $f:=-\Div(\mathbb{L}_0\nabla v_k)$ and define $v_{\ep}\in H^1_0(B)$ as the unique solutions to $-\Div(\mathbb L_\ep\nabla v_\ep)=f$ in $H^{-1}(B)$. By $H$-convergence of $(\mathbb{L}_{\ep})$ and the definition of $f$ we have $v_{\ep}\wto v_k$ weakly in $H^1_0(B)$ and $\mathbb{L}_{\ep}\nabla v_{\ep}\wto\mathbb{L}_0\nabla v_k$ weakly in $L^2(B)$. 
Likewise, by $H$-convergence of $(\widetilde{\mathbb{L}}_{\ep})$ to $\widetilde{\mathbb{L}}_0$ and since $\widetilde{\mathbb{L}}_{\ep}=\mathbb{L}_{\ep}$ on $B$, 
we find that $\mathbb{L}_{\ep}\nabla v_{\ep}\wto\widetilde{\mathbb{L}}_0\nabla v_k$ weakly in $L^2(B)$, and thus $(\widetilde{\mathbb{L}}_0-\mathbb{L}_0)\nabla v_k=0$ $\mu$-a.e.\ in $B$. Since $k$ was arbitrary, the last identity holds for all $k=1,\ldots,n$. Hence \eqref{L:local:eq1} yields $\mathbb{L}_0=\widetilde{\mathbb{L}}_0$ $\mu$-a.e.~in $\tfrac{1}{2}B$. Since $x$ is arbitrary, the last identity holds $\mu$-a.e.\ in $\omega$.
\medskip

\textit{Step 2: Proof of (b).}
\smallskip

Let $\omega\Subset\Omega$. We define $\cL_\ep$ and $\cL_0$ according to \eqref{P1:eq2} and denote the adjoint operators by $\cL_\ep^*$, $\cL_0^*$, i.e.,
\begin{align*}
  &\cL_{\ep}^*\colon H^1_0(\omega)\to
  H^{-1}(\omega),\qquad \cL_\ep^*:=-\Div(\mathbb{L}_{\ep}^*\nabla),\\
  &\cL_{0}^*\colon H^1_0(\omega)\to
  H^{-1}(\omega),\qquad \cL_0^*:=-\Div(\mathbb{L}_{0}^*\nabla).
\end{align*}
Fix $f\in H^{-1}(\omega)$ and let $u_{\ep},u_0\in H^1_0(\omega)$ be the unique solutions to $\cL_\ep^*u_\ep=f$ and $\cL_0^*u_0=f$. It suffice to show that $u_{\ep}\wto u_0$ weakly in $H^1_0(\omega)$  and $\mathbb L_\ep^* \nabla u_\ep\wto \mathbb L_0^* \nabla u_0$ weakly in $L^2(T\omega)$. Since the limiting equation uniquely determines $u_0$, it suffices to prove the statements up to a subsequence. By a standard energy estimate and the uniform boundedness of $(\mathbb L_\ep^*)$ the sequences $(u_\ep)$ and $(\mathbb L_\ep^* \nabla u_\ep)$ are bounded in $H^1_0(\omega)$ and $L^2(T\omega)$, respectively. Hence, there exits $\tilde u_0\in H^1_0(\omega)$ and $J_0\in L^2(T\omega)$ such that for a subsequence (not relabeled),
\begin{equation*}
  \begin{cases}
    u_{\ep}\rightharpoonup \tilde u_0&\quad\text{weakly in }H^1_0(\omega),\\
    \mathbb{L}_{\ep}^*\nabla u_{\ep}\rightharpoonup J_0&\quad\text{weakly in }L^2(T\omega).
  \end{cases}
\end{equation*}
In the next two substeps we complete the argument by showing $\tilde u_0=u_0$ and $J_0=\mathbb L_0^*\nabla u_0$.
\smallskip

\textit{Substep 2.1. Argument for $\tilde u_0=u_0$:} Let $v_0\in H^1_0(\omega)$ and consider  $v_{\ep}:=(\mathcal{L}_{\ep})^{-1}\mathcal{L}_0v_0$. Thanks to $\mathbb{L}_{\ep}\Hto\mathbb{L}_0$ we have
\begin{equation*}
	\begin{cases}
		v_{\ep}\rightharpoonup v_0&\quad\text{weakly in }H^1_0(\omega)\text{ and strongly in }L^2(\omega),\\
		\mathbb{L}_{\ep}\nabla v_{\ep}\rightharpoonup\mathbb{L}_0\nabla v_0&\quad\text{weakly in }L^2(T\omega).
	\end{cases}
\end{equation*}
The Div-Curl Lemma (Lemma \ref{lemC}) thus yields
\begin{eqnarray*}
  \int_\omega (\mathbb{L}_{\ep}^*\nabla u_{\ep},\nabla v_{\ep})&=&\int_\omega (\nabla u_{\ep},\mathbb{L}_{\ep}\nabla v_{\ep})\to\int_\omega(\nabla \tilde u_0,\mathbb{L}_0\nabla v_0)=\int_\omega(\mathbb{L}_0^*\nabla \tilde u_0,\nabla v_0)\\
  &=&  \langle\cL^*_0 \tilde u_0,v_0\rangle.
\end{eqnarray*}
Since, on the other hand we have $\int_\omega (\mathbb{L}_{\ep}^*\nabla u_{\ep},\nabla v_{\ep})=\langle f,v_{\ep}\rangle\to\langle f,v_0\rangle$, and since $v_0\in H^1_0(\omega)$ is arbitrary, we conclude $\mathcal{L}_0^*\tilde u_0=f$ in $H^{-1}_0(\omega)$. Since the kernel of $\cL_0^*$ is trivial, we deduce that $\tilde u_0=u_0$.
\medskip

\textit{Substep 2.2:  Argument for $J_0=\mathbb L_0^*\nabla u_0$}. Let $B\Subset\omega$ be an open ball with radius less than the injectivity radius at its center and fix $v_1,\dotsc,v_n\in C^{\infty}_c(B)\subset C^\infty_c(\omega)$ according to Remark~\ref{R:vs}. Consider $v_{\ep}:=(\mathcal{L}_{\ep})^{-1}\mathcal{L}_0v_j$ and note that $\mathbb{L}_{\ep}\Hto \mathbb{L}_0$ yields
\begin{equation*}
  \begin{cases}
    v_{\ep}\rightharpoonup v_j&\quad\text{weakly in }H^1_0(\omega)\text{ and strongly in }L^2(\omega),\\
    \mathbb{L}_{\ep}\nabla v_{\ep}\rightharpoonup\mathbb{L}_0\nabla v_j&\quad\text{weakly in }L^2(T\omega),
  \end{cases}
\end{equation*}
Thus for any $\varphi\in C^\infty_c(\omega)$ the Div-Curl Lemma (Lemma \ref{lemC}) yields 
\begin{equation*}
  \int_\omega(\mathbb{L}_{\ep}^*\nabla u_{\ep},\nabla v_{\ep})\varphi\to\int_\omega(J_0,\nabla v_j)\varphi,
\end{equation*}
and thus
\begin{equation*}
  \int_\omega (\mathbb{L}_{\ep}^*\nabla u_{\ep},\nabla v_{\ep})\varphi=\int_\omega(\nabla u_{\ep},\mathbb{L}_{\ep}\nabla v_{\ep})\varphi\to\int_\omega(\nabla u_0,\mathbb{L}_0\nabla v_j)\varphi=\int_\omega(\mathbb{L}_0^*\nabla u_0,\nabla v_j)\varphi.
\end{equation*}
Since $\varphi\in C^\infty_c(\omega)$ is arbitrary because of \eqref{L:local:eq1}, we get $J_0=\mathbb{L}_0^*\nabla u_0$.
\end{proof}

\begin{proof}[Proof of Lemma \ref{lemB}]
Let $\cL_\ep$ and $\cL_0$ be defined by \eqref{P1:eq2} and denote by $\cL_\ep^*$ and $\cL_0^*$ the adjoint operators. Note that $u_0$ is uniquely determined by
\begin{equation}\label{lemB:eq1}
  \mathcal L_0 u_0=f_0-\Div(\mathbb{L}_0G_0)-\Div F_0\qquad\text{in }H^{-1}(\omega).
\end{equation}
We first note that (up to a subsequence) $(u_{\ep})$ converges weakly in $H^1_0(\omega)$ to some $\tilde u_0\in H^1_0(\omega)$, and $(\mathbb{L}_{\ep}\nabla u_{\ep})$ converges weakly in $L^2(T\omega)$ to some $J_0\in L^2(T\omega)$. We first claim that $\tilde u_0$ solves \eqref{lemB:eq1} (which by uniqueness of the solution implies that $\tilde u_0=u_0$). For the argument let $v_0\in H^1_0(\omega)$ and consider the oscillating test-function $v_{\ep}:=(\mathcal{L}_{\ep}^*)^{-1}\mathcal{L}_0^*v_0\in H^1_0(\omega)$.
Since $\mathbb L_{\ep}^*\Hto\mathbb L_0^*$ by Lemma~\ref{L:local}, and $\mathcal{L}_{\ep}^*v_{\ep}=\mathcal{L}_0^*v_0$, we deduce that
\begin{equation*}
  \begin{cases}
    v_{\ep}\rightharpoonup v_0&\quad\text{weakly in }H^1_0(\omega)\text{ and strongly in }L^2(\omega),\\
  \mathbb{L}_{\ep}^*\nabla v_{\ep}\rightharpoonup\mathbb{L}_0^*\nabla v_0&\quad\text{weakly in }L^2(T\omega).
\end{cases}
\end{equation*}
Thanks to $u_{\ep}\rightharpoonup \tilde u_0$ weakly in $H^1_0(\omega)$ and the Div-Curl Lemma (Lemma \ref{lemC}) we get on the one hand
\begin{eqnarray*}
  \langle \cL_\ep u_\ep,v_\ep\rangle&=&\int_{\omega}(\mathbb{L}_{\ep}\nabla u_{\ep},\nabla v_{\ep})=\langle f_{\ep},v_{\ep}\rangle+\int_{\omega}(G_{\ep},\mathbb{L}_{\ep}^*\nabla v_\ep)+(F_{\ep},\nabla v_{\ep})\\
  &\to&\int_{\omega} f_0v_0+\int_{\omega}(G_0,\mathbb L_0^*\nabla v_0)+(F_0,\nabla v_0)\\
  &=&\int_{\omega} f_0v_0+\int_{\omega}(\mathbb L_0G_0+F_0,\nabla v_0),
\end{eqnarray*}
and on the other hand
\begin{eqnarray*}
\langle \cL_\ep u_\ep,v_\ep\rangle&=&\langle \cL_\ep^* v_\ep,u_\ep\rangle=\int_{\omega}(\nabla u_{\ep},\mathbb{L}_{\ep}^*\nabla v_{\ep})\to\int_{\omega}(\nabla\tilde u_0,\mathbb{L}_0^*\nabla v_0)=\int_{\omega}(\mathbb L_0\nabla \tilde u_0,\nabla v_0)\\
&=&\langle \cL_0\nabla\tilde u_0,\nabla v_0\rangle.
\end{eqnarray*}
Since $v_0\in H^1_0(\omega)$ is arbitrary, we conclude that $\tilde u_0$ solves \eqref{lemB:eq1} and thus $\tilde u_0=u_0$. Moreover, by the argument of Substep~2.1 in the proof of Lemma~\ref{L:local} (b), we deduce that $J_0=\mathbb{L}_0\nabla u_0$, which completes the argument.\end{proof}


\subsection{Proof of Theorem~\ref{T1}}\label{S:pT1}

The proof is structured as follows: In Step~1 we pass to a subsequence and define the $H$-limit $\mathbb L_0$ by appealing to a covering of $M$ by balls, Proposition~\ref{P1}, and Lemma~\ref{L:local}; (at this point we only have $H$-convergence on balls). In Step~2 we show part (b) of the theorem and recover (a) as a special case.
\medskip

\textit{Step 1.} Choice of the subsequence and definition of $\mathbb{L}_0$.\\
Let $(B_j)$ denote a countable covering of $M$ by open balls with $4B_j\Subset M$
such that the radius of $B_j$ is smaller than a quarter of the injectivity radius of $M$ at the center of $B_j$. 
For every $j\in\mathbb{N}$ Proposition \ref{P1} provides a subsequence of $(\mathbb{L}_{\ep})$ 
$H$-converging to some $\mathbb{L}_{j,0}\in\mathcal{M}(2B_j,\lambda,\Lambda)$ in $2B_j$. 
Thus (by a diagonal subsequence argument) we can choose a subsequence 
$E'\subset E$ such that $(\mathbb L_\ep)$ $H$-converges to $\mathbb{L}_{j,0}$ in $2B_j$ for all $j\in\mathbb{N}$. 
By Lemma~\ref{L:local}~(a) we have $\mathbb{L}_{j,0}=\mathbb{L}_{k,0}$ $\mu$-a.e.~in $B_j\cap B_k$,
and thus we can choose a coefficient field $\mathbb{L}_0\in\mathcal{M}(M,\lambda,\Lambda)$ 
with $\mathbb{L}_0(x)=\mathbb{L}_{j,0}(x)$ for $\mu$-a.e.~$x\in B_j$, $j\in\mathbb{N}$. 

\bigskip
\textit{Step 2.} Proof of (b).\\
Fix $\Omega\subset M$ open, $m>\tfrac{m_0(\Omega)}{\lambda}$, and take sequences $(f_{\ep})\subset L^2(\Omega)$ and $(F_{\ep})\subset L^2(T\Omega)$ with $f_{\ep}\rightharpoonup f_0$ weakly in $L^2(\Omega)$ and $F_{\ep} \to F_0$ in $L^2(T\Omega)$. Let $u_{\ep}\in H^1_0(\Omega)$ be the solution to
\begin{equation*}
  m u_{\ep}-\Div (\mathbb{L}_{\ep}\nabla u_{\ep})=f_{\ep}-\Div F_{\ep}
	\qquad\text{in }H^{-1}(\Omega).
\end{equation*}
We extract a subsequence $E^{\prime\prime}\subset E^{\prime}$ such that
\begin{equation}\label{eq:convueps}
	\begin{cases}
		u_{\ep}\rightharpoonup u_0&\quad\text{in }H^1_0(\Omega),\\
		\mathbb{L}_{\ep}\nabla u_{\ep}\rightharpoonup J_0&\quad\text{in }L^2(T\Omega)
	\end{cases}
\end{equation}
for some $u_0\in H^1(\Omega)$ and $J_0\in L^2(T\Omega)$. We now claim that $u_0$ is the (unique) solution in $H^1_0(\Omega)$ to
\begin{equation}\label{limit:eq}
	m u_0-\Div (\mathbb{L}_0\nabla u_0)=f_0-\Div F_0
	\qquad\text{in }H^{-1}(\Omega)
\end{equation}
and that $J_0=\mathbb{L}_0\nabla u_0$. For the argument we use the covering $(B_j)$ of $M$ described in Step~1. 
Let $\varphi_j\in C_c^{\infty}(M)$ denote a partition of unity subordinate to $(B_j)$, in the sense that 
$\operatorname{supp}\varphi_j\Subset B_j$ and $\sum_{j=1}^{\infty}\varphi_j=1$. Then for every $\varphi\in H^1_0(\Omega)$ and every $j\in\mathbb{N}$
\begin{equation}\label{eq:localized}
  \begin{aligned}
    \int_{\Omega}(\mathbb{L}_{\ep}\nabla(\varphi_ju_{\ep}), \nabla\varphi)
    &=\int_{\Omega}(u_{\ep}\mathbb{L}_{\ep}\nabla\varphi_j, \nabla\varphi)
    +\int_{\Omega}(\varphi_j\mathbb{L}_{\ep}\nabla u_{\ep}, \nabla\varphi)\\
    &=\int_{\Omega}(u_{\ep}\mathbb{L}_{\ep}\nabla\varphi_j, \nabla\varphi)
    +\int_{\Omega}(\mathbb{L}_{\ep}\nabla u_{\ep}, \nabla(\varphi_j\varphi))
    -\int_{\Omega}(\mathbb{L}_{\ep}\nabla u_{\ep},\varphi \nabla\varphi_j)\\
    &=\int_{\Omega}(u_{\ep}\mathbb{L}_{\ep}\nabla\varphi_j, \nabla\varphi)
    +\int_{\Omega}(f_{\ep}-mu_{\ep})\varphi_j\varphi+(F_\ep,\nabla(\varphi_j\varphi))
    \\&\phantom{{}=}-\int_{\Omega}(\mathbb{L}_{\ep}\nabla u_{\ep},\varphi \nabla\varphi_j)\\
    &=\int_{\Omega}(\mathbb{L}_{\ep}(u_{\ep}\nabla\varphi_j),\nabla\varphi)+\int_\Omega (\varphi_j F_\ep, \nabla\varphi)\\
    &\phantom{{}=}+\int_{\Omega}\Big((f_{\ep}-mu_{\ep})\varphi_j+
    ((F_\ep-\mathbb L_\ep \nabla u_\ep),\nabla\varphi_j)\Big)\varphi\\
    &=\int_{\Omega}(\mathbb{L}_{\ep} G_{j,\ep},\nabla\varphi)+\int_\Omega (F_{j,\ep}, \nabla\varphi)+\int_{\Omega} g_{j,\ep}\varphi,
  \end{aligned}
\end{equation}
where
\begin{equation*}
          g_{j,\ep}:=(f_{\ep}-mu_{\ep})\varphi_j+ ((F_\ep-\mathbb L_\ep \nabla u_\ep),\nabla\varphi_j),\qquad 
          G_{j,\ep}:=u_\ep \nabla\varphi_j,\qquad F_{j,\ep}:=\varphi_j F_\ep.
\end{equation*}
Moreover set $v_{j,\ep}:=\varphi_ju_{\ep}$ and  note that $v_{j,\ep}\in H^1_0(B_j)$. 
Since \eqref{eq:localized} holds in particular for all $\varphi\in H^1_0(B_j)$, we infer that $v_{j,\ep}$ is the unique solution in $H^1_0(B_j)$ to 
\begin{equation*}
  -\Div (\mathbb{L}_{\ep}\nabla v_{j,\ep})=g_{j,\ep}-\Div (\mathbb L_\ep G_{j,\ep})-\Div  F_{j,\ep}\qquad\text{in }H^{-1}(B_j).
\end{equation*}
By Step~1 we have $\mathbb L_{\ep}\Hto \mathbb L_0$ on $2B_j$.
Furthermore, from \eqref{eq:convueps}, the compact embedding of $H^1_0(B_j)\subset L^2(B_j)$ 
(which yields $u_\ep\to u_0$ strongly in $L^2(B_j)$), and the convergence properties of $(f_\ep)$ and $(F_\ep)$, we deduce that
\begin{equation}\label{eq:comp}
	\begin{cases}
          v_{j,\ep}\wto v_{j,0}:=\varphi_j u_0&\quad\text{weakly in }H^1(B_j),\\
          g_{j,\ep}\rightharpoonup g_{j,0}:=(f_0-m u_0)\varphi_j+((F_0-J_0),\nabla\varphi_j)&\quad\text{weakly in }L^2(B_j),\\
          G_{j,\ep}\to G_{j,0}:=u_0\nabla\varphi_j&\quad\text{strongly in }L^2(TB_j),\\
          F_{j,\ep}\to F_{j,0}:=\varphi_j F_0&\quad\text{strongly in }L^2(TB_j).
	\end{cases}
\end{equation}
Hence, Lemma~\ref{lemB} implies that $v_{j,0}\in H^1_0(B_j)$ is the weak solution to
\begin{equation*}
  -\Div (\mathbb{L}_{0}\nabla v_{j,0})=g_{j,0}-\Div (\mathbb L_0 G_{j,0})-\Div  F_{j,0}\qquad\text{in }H^{-1}(B_j),
\end{equation*}
and
\begin{equation}\label{eq:flux}
  \mathbb{L}_{\ep}\nabla v_{j,\ep}\rightharpoonup\mathbb{L}_0\nabla v_{j,0}\qquad\text{weakly in }L^2(TB_j).
\end{equation}
Since $\sum_{j=1}^\infty\varphi_j=1$ we deduce that  $\sum_{j=1}^{\infty}\nabla\varphi_j=0$, and thus
\begin{equation*}
  \sum_{j=1}^\infty v_{j,0}=u_0,\qquad 
  \sum_{j=1}^\infty F_{j,0}=F_0,\qquad \sum_{j=1}^\infty G_{j,0}=0,\qquad \sum_{j=1}^\infty g_{j,0}=(f_0-m u_0).
\end{equation*}
In particular, summation of \eqref{eq:flux} yields $\mathbb L_\ep \nabla u_{\ep}\wto J_0=\mathbb L_0\nabla u_0$ weakly in $L^2(T\Omega)$. Moreover, for any test function $\varphi\in C^\infty_c(\Omega)$ we have on the one hand
\begin{align*}
  \int_\Omega (\mathbb L_\ep \nabla u_\ep,\nabla\varphi)=\sum_{j=1}^\infty \int_\Omega (\mathbb L_\ep \nabla v_{j,\ep},\nabla\varphi)\to \sum_{j=1}^\infty \int_\Omega (\mathbb L_0 \nabla v_{j,0},\nabla\varphi)=\int_\Omega(\mathbb L_0\nabla u_0,\nabla\varphi),
\end{align*}
and on the other hand, by summation of \eqref{eq:localized}, and by \eqref{eq:comp},
\begin{align*}
  \int_\Omega (\mathbb L_\ep \nabla u_\ep,\nabla\varphi)&=\sum_{j=1}^\infty \int_\Omega (\mathbb L_\ep d v_{j,\ep},\nabla\varphi)\\
  &=\sum_{j=1}^\infty\int_{B_j}(\mathbb L_\ep G_{j,\ep},\nabla\varphi)+(F_{j,\ep},\nabla\varphi)+g_{j,\ep}\varphi\\
  &\to \sum_{j=1}^d\int_{B_j} (\mathbb L_0G_{j,0}+F_{j,0},\nabla\varphi)+g_{0,j}\varphi\\
  &=\int_{\Omega} (F_{0},\nabla\varphi)+(f_0-mu_0)\varphi.
\end{align*}
The combination of the previous two identities yields \eqref{limit:eq}. Since the latter admits a unique solution, we deduce that the convergence holds for the entire subsequence $E'$. Finally we note that if $H^1_0(\Omega)$ is compactly contained in $L^2(\Omega)$, then we even have $u_{\ep}\to u_0$ strongly in $L^2(\Omega)$. The same conclusion is true if $m\neq 0$ and $f_{\ep}\to f_0$ strongly in $L^2(\Omega)$. To see this, first note that by $\mathbb L_{\ep}\nabla u_\ep\wto \mathbb L_0\nabla u_0$ and Lemma~\ref{lemC} we have
\begin{equation*}
  \int_\Omega (\mathbb L_{\ep}\nabla u_\ep,\nabla u_\ep)\to \int_\Omega (\mathbb L_0 \nabla u_0,\nabla u_0).
\end{equation*}
Thus, since we may pass to the limit in products of weakly and strongly convergent sequences,
\begin{align*}
m \int_{\Omega} \ue^2 
&=m \int_{\Omega} \ue^2 + \int_{\Omega} (\bLe \nabla\ue,\nabla\ue) - \int_{\Omega} (\bLe \nabla\ue,\nabla\ue) \\
&=\int_{\Omega} f_\ep\ue +\int_{\Omega}(F_\ep,\nabla\ue) - \int_{\Omega} (\bLe \nabla\ue,\nabla\ue) \\
&\to\int_{\Omega} f_0 u_0+\int_{\Omega}(F_0,\nabla u_0)  - \int_{\Omega} (\bL_0 \nabla u_0,\nabla u_0)
=m \int_{\Omega} u_0^2.
\end{align*}
Since $m\neq0$, this implies $\|u_\ep\|_{L^2(\Omega)}\to\|u_0\|_{L^2(\Omega)}$, which combined with the weak convergence $u_\ep\wto u_0$ in $L^2(\Omega)$ yields the claimed strong convergence $u_\ep\to u_0$ in $L^2(\Omega)$. This completes the argument for part (b).

\bigskip

\textit{Step 3.} Proof of part (a).\\
Since $m_0(\omega)<0$, we can take $m=0$ in part (\ref{th2:b}) and $H$-convergence immediately follows.
\qed


\subsection{Proofs of Lemma~\ref{190514.01}, Lemma~\ref{local-chart} and Lemma~\ref{L:periodic}}

\begin{proof}[Proof of Lemma~\ref{190514.01}]Let $\overline{\xi}=(\overline{\xi}^1,\dotsc,\overline{\xi}^n),\overline{\eta}=(\overline{\eta}^1,\dotsc,\overline{\eta}^n) \in \mathbb R^n$ and $\xi,\eta\in T_xM$ such that
\begin{equation*}
	\begin{cases}
		\overline{\xi}^i=g(\xi,\tfrac{\partial}{\partial x^i})&\\
		\overline{\eta}^i=g(\eta,\tfrac{\partial}{\partial x^i})&
	\end{cases}
	\qquad\text{for $i=1,\dotsc,n$}.
\end{equation*}
We identify $x \in \Psi^{-1}(U)$ and the corresponding point in $U$.
Since the metric $g(\cdot,\cdot)(x)$ continuously depends on $x$, since $\Psi$ is a diffeomorphism, and because $U\Subset\Psi(\Omega)$, there exists a constant $C>0$ such that
\begin{equation*}
	\tfrac{1}{C}|\overline{\xi}|^2\le\sum_{i,j=1}^ng^{ij}(x)\overline{\xi}^i\overline{\xi}^j=g(\xi,\xi)(x)\le C|\overline{\xi}|^2\qquad\text{and}\qquad \tfrac{1}{C}\leq \rho(x)\leq C,
\end{equation*}
for all $x\in \Psi^{-1}(U)$, where $(g^{ij})$ denotes the inverse of the matrix representation $(g_{ij})$ of $g$ in local coordinates, i.e. $g_{ij}=g(\tfrac{\partial}{\partial x^i},\tfrac{\partial}{\partial x^j})$. Then the uniform ellipticity of $\bL$  yields
\[
A(x)\overline{\xi} \cdot \overline{\xi} =\rho(x) g(\bL \xi,\xi)(x) \ge \lambda \rho(x)g(\xi, \xi)(x) \ge \tfrac{1}{C'}|\overline{\xi}|^2
\]
and
\[
A(x)\overline{\xi} \cdot \overline{\eta} = \rho(x)g(\mathbb{L}\xi,\eta)(x) \le \Lambda \rho(x)|\xi(x)|_g |\eta (x)|_{g} \le C' |\overline{\xi}| |\overline{\eta}|
\]
for some $C'>0$. Thus the statement follows.
\end{proof}

\begin{proof}[Proof of Lemma~\ref{local-chart}]
We prove only $(2)\Rightarrow(1)$ as 
the opposite implication can be proved in the same way. 
Let $f \in L^2({\omega})$ and $\xi \in L^2(T{\omega})$. 
Let  $\ue \in H^1_0({\omega})$ with $\ep > 0$ be
the solution of 
\[-\Div_{g,\mu}( \bLe \nabla_g \ue) = f - \Div_{g,\mu}\xi\qquad\text{in }H^{-1}({\omega}).\]
By \eqref{eq250109}, $\ue$ is the solution to
\[-\Div (A_{\ep} \nabla \ue) = \rho f - \di (\rho F)\qquad\text{in }H^{-1}(U).\]
Since $(A_\ep)$ $H$-converges to $A_0$, 
\begin{equation} \label{eq260109}
\begin{cases}
\ue \wto u_0 &\quad\mbox{weakly in $H^1_0 (U)$},\\
A_\ep \nabla \ue \wto A_0 \nabla u_0 &\quad\mbox{weakly in $L^2 (U;\mathbb{R}^n)$},\\
\end{cases}
\end{equation}
where
\begin{equation} \label{eq270109}
-\di (A_0 \nabla u_0) = \rho f - \di (\rho F)\qquad\text{in }H^{-1}(U).
\end{equation}
By \eqref{eq260109}
\begin{equation} \label{eq280109}
\ue \wto u_0 \qquad \mbox{weakly in $H^1_0 ({\omega},g)$}.
\end{equation}
For any $\eta \in L^2(T{\omega})$ and $\overline{\eta}=(\overline{\eta}^1,\dotsc,\overline{\eta}^n) \in L^2(U;\mathbb{R}^n)$ with $\overline{\eta}^i:=g(\eta,\tfrac{\partial}{\partial x^i})$ for $i=1,\dotsc,n$ we have
\begin{align*}
\int_{\omega} g(\bLe \nabla_g \ue,\eta)\dm 
&=\int_U A_\ep(x) \nabla \ue \cdot \overline{\eta}\, \rd x \to \int_U A_0 (x) \nabla u_0 \cdot \overline{\eta}\, \rd x \qquad \mbox{(as $\ep\to0$)} \\
&=\int_{\omega} g(\bL_0 \nabla_g u_0,\eta)\dm.
\end{align*}
Hence,
\begin{equation} \label{eq290109}
  \bL_\ep \nabla_g\ue \wto \bL_0 \nabla_g u_0 \qquad \mbox{weakly in $L^2 (T{\omega})$}.
\end{equation}

Since \eqref{eq270109} is equivalent to
\[
  -\Div_{g,\mu}(\bL_0 \nabla_g u_0) = f - \Div_{g,\mu}\xi\qquad\text{in }H^{-1}({\omega}),
\]
together with \eqref{eq280109} and \eqref{eq290109}
we arrive at the conclusion.
\end{proof}

\begin{proof}[Proof of Lemma~\ref{L:periodic}]
  The proof is a direct consequence of Lemma~\ref{local-chart} and the well-known fact from periodic homogenization that 
  $A_\ep(x)=A(x,\frac{x}{\ep})$ $H$-converges to $A_{\hom}$, e.g.\ see \cite[Theorem~2.2]{allaire1992}.
\end{proof}

\subsection{Proofs of Lemma~\ref{L:trafo}, Lemma~\ref{prop33108},  and Lemma~\ref{171017.cor}}

\begin{proof}[Proof of Lemma \ref{L:trafo}]
  \textit{Step 1.} Argument for (a)$\Leftrightarrow$(b).\\
  Since $h\colon M_0\to M$ is a diffeomorphism, the integral transformation formula yields for any function $f\in L^1(M,g,\mu)$
  \begin{equation*}
    \int_Mf\,\mathrm{d}\mu=\int_{M_0}(f\circ h)\rho\,\mathrm{d}\mu_0.
  \end{equation*}
	To show the equivalence of statement (a) and (b) it only remains to show
	\begin{equation*}
          \overline{g}(\nabla_{\overline{g}} u,\nabla_{\overline{g}}\varphi)\,\rho=g_0(\mathbb{L}\nabla_{g_0}\overline{u},\nabla_{g_0}\overline{\varphi})
	\end{equation*}
	for any test function $\varphi\in C_c^{\infty}(M)$. To that end we first claim $\nabla_{\overline{g}}u=(dh^{-1})^*\nabla_{g_0}\overline{u}$ (and that the same holds for $\varphi$). Indeed, using the definition of the gradient and the adjoint, we have
	\begin{equation*}
		\overline{g}(\nabla_{\overline{g}}u,\xi)=du(\xi)=d(u\circ h)(dh^{-1}\xi)=g_0(\nabla_{g_0}\overline{u},dh^{-1}\xi)=\overline{g}((dh^{-1})^*\nabla_{g_0}\overline{u},\xi).
	\end{equation*}
	Together with the definition of $\mathbb{L}$ we conclude
	\begin{equation*}
		\overline{g}(\nabla_{\overline{g}} u,\nabla_{\overline{g}}\varphi)\,\rho=\overline{g}((dh^{-1})^*\nabla_{g_0}\overline{u},(dh^{-1})^*\nabla_{g_0}\overline{\varphi})\,\rho=g_0(\mathbb{L}\nabla_{g_0}\overline{u},\nabla_{g_0}\overline{\varphi}).
	\end{equation*}
	\medskip
        
	\textit{Step 2.} Argument for (b)$\Leftrightarrow$(c).\\
	By the definition of $\LimMeasure $ it suffices to show
	\begin{equation*}
		g_0(\mathbb{L}\nabla_{g_0}\overline{u},\nabla_{g_0}\overline{\varphi})=\LimMetric (\nabla_{\LimMetric }\overline{u},\nabla_{\LimMetric }\overline{\varphi})\,\rho.
	\end{equation*}
	We first observe $\mathbb{L}\nabla_{g_0}\overline{u}=\rho\nabla_{\LimMetric}\overline{u}$, which can be seen by the following direct computation, using the definition of $\LimMetric$ and of the gradient:
	\begin{equation*}
		\LimMetric(\mathbb{L}\nabla_{g_0}\overline{u},\xi)=\rho\,g_0(\nabla_{g_0}\overline{u},\xi)=\rho\,d\overline{u}(\xi)=\rho\,\LimMetric(\nabla_{\LimMetric}\overline{u},\xi).
	\end{equation*}
	Again with the definition of the gradient we finally get
	\begin{equation*}
		g_0(\mathbb{L}\nabla_{g_0}\overline{u},\nabla_{g_0}\overline{\varphi})=\rho\,g_0(\nabla_{\LimMetric}\overline{u},\nabla_{g_0}\overline{\varphi})=\rho\,d\overline{\varphi}(\nabla_{\LimMetric}\overline{u})=\rho\,\LimMetric (\nabla_{\LimMetric }\overline{u},\nabla_{\LimMetric }\overline{\varphi}).
		\qedhere
	\end{equation*}
\end{proof}
\medskip

\begin{proof}[Proof of Lemma~\ref{prop33108}]
By construction, there exists a constant $C_0>0$ (only depending on the constant $C$ of Definition~\ref{D:biLip} and the dimension $n$) such that $\mathbb L_\ep\in\mathcal M(M_0,\frac{1}{C_0},C_0)$ and $\frac{1}{C_0}\leq \rho_\ep\leq C_0$ a.e. in $M_0$. Therefore, by weak-$*$ compactness in $L^\infty(M_0)$ and by Theorem~\ref{T1} there exist a subsequence, a density $\rho_0\in L^\infty(M_0)$ satisfying $\frac{1}{C_0}\leq \rho_0\leq C_0$, and a coefficient field $\mathbb L_0\in\mathcal M(M_0,\frac{1}{C_0},C_0)$ such that 
$\rho_\ep\stackrel{*}{\wto}\rho_0$ weak-$*$ in $L^\infty(M_0)$ and $\mathbb L_\ep\stackrel{H}{\to}\mathbb L_0$ in $(M_0,g_0,\mu_0)$ along a subsequence that we do not relabel. This proves statement (a).
\smallskip

Next, we prove statement (b). Set $\overline u_\ep:=u_\ep\circ h_\ep$ and $\overline f_\ep:=f\circ h_\ep$. By Lemma~\ref{L:trafo} (b), \eqref{C:bilip:1} is equivalent to
\begin{equation}\label{LL:eq:1}
  \big(\overline m-\Div_{g_0,\mu_0}(\mathbb L_\ep\nabla_{g_0})\big)\overline u_\ep=\rho_\ep\overline f_\ep-(\rho_\ep m-\overline m)\overline u_\ep\qquad\text{in }H^{-1}(M_0, g_0,\mu_0),
\end{equation}
where $\overline m$ denotes a (sufficiently large) dummy constant that we introduce in order to be able to apply Theorem~\ref{T1}. By a standard energy estimate, $(\overline u_\ep)$ is bounded in $H^1(M_0,g_0,\mu_0)$ and thanks to the compact embedding of $H^1(M_0,g_0,\mu_0)$ in $L^2(M_0,g_0,\mu_0)$ in Assumption~\ref{ass}. Thus there exists $\overline u_0\in H^1_0(M_0,g_0,\mu_0)$ such that $\overline u_\ep\to \overline u_0$ strongly in $L^2(M_0,g_0,\mu_0)$ (for a further subsequence). Moreover, since $f_\ep\to f_0$ strongly in $L^2$ in the sense of \eqref{strong-conv}, $\rho_\ep\stackrel{*}{\wto}\rho_0$ weak-$*$ in $L^\infty(M_0)$, and since $\frac{1}{C_0}\leq \rho_\ep\leq C_0$, we deduce that $\rho_\ep f_\ep\wto \rho_0 f_0$ weakly in $L^2(M_0,g_0,\mu_0)$, and thus we get for the right-hand side in \eqref{LL:eq:1},
\begin{equation*}
  \rho_\ep\overline f_\ep-(\rho_\ep m-\overline m)\overline u_\ep\wto \rho_0f_0-(\rho_0 m-\overline m)\overline u_0\qquad\text{weakly in }L^2(M_0,g_0,\mu_0).
\end{equation*}
Since $\mathbb L_\ep\stackrel{H}{\to}\mathbb L_0$ we conclude with Theorem~\ref{T1} that $\overline u_0$ is a solution to 
\begin{equation}\label{LL:eq:2}
  \big(\overline m-\Div_{g_0,\mu_0}(\mathbb L_0\nabla_{g_0})\big)\overline u_0=\rho_0f_0-(\rho_0 m-\overline m)\overline u_0\qquad\text{in }H^{-1}(M_0, g_0,\mu_0).
\end{equation}
Since this PDE admits a unique solution, we conclude that $\overline u_\ep\wto\overline u_0$ weakly in $H^1(M_0,g_0,\mu_0)$, and thus strongly in $L^2(M_0,g_0,\mu_0)$, for the entire sequence.
By appealing to the equivalence of (b) and (c) in Lemma~\ref{L:trafo}, we deduce from \eqref{LL:eq:2} that $u_0:=\overline u_0$ satisfies \eqref{C:bilip:2}. It remains to argue that $u_\ep\to u_0$ in the sense of \eqref{strong-conv}. To that end let $\psi\in C^\infty_c(M_0)$. Then, since $\overline u_\ep\to u_0$ strongly and $\rho_\ep\wto\rho_0$ weakly in $L^2(M_0,g_0,\mu_0)$,
\begin{align*}
  \int_{M_\ep}u_\ep(\psi\circ h_\ep^{-1})\,\mathrm d\mu_\ep=  \int_{M_0}\overline u_\ep\psi\rho_\ep\,\mathrm d\mu_0 \to \int_{M_0}u_0\psi\rho_0\,\mathrm d\mu_0=\int_{M_0}u_0\psi\,\mathrm d\hat \mu_0.
\end{align*}
Moreover, since $\rho_\ep\stackrel{*}{\wto}\rho_0$ in $L^\infty(M_0)$ we have $\overline u_\ep\rho_\ep\wto u_0\rho_0$ weakly in $L^2(M_0,g_0,\mu_0)$, and thus
\begin{equation*}
  \int_{M_\ep}|u_\ep|^2\,\mathrm d\mu_\ep=  \int_{M_0}\overline u_\ep\, \overline u_\ep\rho_\ep\,\mathrm d\mu_0 \to \int_{M_0}u_0\, u_0\rho_0\,\mathrm d\mu_0=\int_{M_0}|u_0|^2\, d\hat \mu_0.
	\qedhere
\end{equation*}
\end{proof}

\begin{proof}[Proof of Lemma~\ref{171017.cor}]
The argument is similar to the proof of Lemma~\ref{L:HtoSpec}, which itself is based on \cite[Lemma~11.3 and Theorem~11.5]{Zhikov-book}. We only need to treat small changes that come from rewriting the eigenvalue problem on $M_\ep$ as a PDE on the reference manifold $M_0$. For the sake of brevity we only prove that eigenpairs of the Laplace-Beltrami operator on $M_\ep$ converge (up to a subsequence) to an eigenpair of the Laplace-Beltrami operator on $(M_0,\hat g_0,\hat\mu_0)$. The conclusion of the statements of the theorem then follow by appealing to \cite[Lemma~11.3 and Theorem~11.5]{Zhikov-book}.

We first note that for all $k\in\mathbb N$ the sequence $(\lambda_{\ep,k})$ is bounded from above: For the first eigenvalue, \eqref{unifbound} implies
\begin{align*}
	\lambda_{\varepsilon,1}
	&=\inf\Big\{\int_{M_{\varepsilon}}g_{\varepsilon}(\nabla_{g_\ep} u,\nabla_{g_\ep} u)\,\mathrm{d}\mu_{\varepsilon}; u\in H^1_0(M_{\ep}),\,\|u\|_{L^2(M_{\varepsilon})}=1\Big\}\\
	&=\inf\Big\{\int_{M_0}g_0(\mathbb{L}_{\varepsilon}\nabla_{g_0}(u\circ h_{\varepsilon}),\nabla_{g_0}(u\circ h_{\varepsilon}))\,\mathrm{d}\mu_0; u\in H^1_0(M_{\ep}),\,\|u\|_{L^2(M_{\varepsilon})}=1\Big\}\\
	&\le C_0\inf\Big\{\int_{M_0}g_0(\nabla_{g_0}v,\nabla_{g_0}v)\,\mathrm{d}\mu_0; v\in H^1_0(M_{0}),\,\|v\|_{L^2(M_0)}=1\Big\}\\
	&<\infty
\end{align*}
for some constant $C_0>0$ only depending on the constant $C$ in Definition~\ref{D:biLip} and the dimension $n$. The analogue statement for the other eigenvalues can be obtained by the Rayleigh-Ritz method with a similar argument.
Likewise the sequence of the first eigenvalues $(\lambda_{1,\ep})$  is bounded from below by a positive constant. Indeed, for every eigenpair $(\lambda_{\varepsilon},u_{\varepsilon})$ we deduce with Lemma~\ref{L:trafo}, \eqref{unifbound}, and assumption $m_0(M_0)<0$ that there exists constants $C_0,\overline C_0>0$ (only depending on the constant $C$ in Definition~\ref{D:biLip} and the dimension $n$) such that
\begin{align*}
	\lambda_{\varepsilon,1}
	&=\lambda_{\varepsilon}\|u_{\varepsilon,1}\|_{L^2(M_{\varepsilon})}^2=\int_{M_{\varepsilon}}g_{\varepsilon}(\nabla_{\varepsilon} u_{\varepsilon},\nabla_{\varepsilon} u_{\varepsilon})\,\mathrm{d}\mu_{\varepsilon}\\
	&=\int_{M_0}g_0(\mathbb{L}_{\varepsilon}\nabla_{g_0}(u_{\varepsilon}\circ h_{\varepsilon}),\nabla_{g_0}(u_{\varepsilon}\circ h_{\varepsilon}))\,\mathrm{d}\mu_0 \ge \frac{1}{C_0}\int_{M_0} g_0(\nabla_{g_0}(u_{\varepsilon}\circ h_{\varepsilon}),\nabla_{g_0}(u_{\varepsilon}\circ h_{\varepsilon}))\,\mathrm{d}\mu_0\\
	&\ge \frac{1}{C_0}\|u_\ep\|_{L^2(M_0)}^2\inf\Big\{\int_{M_0} g_0(\nabla_{g_0}v,\nabla_{g_0}v)\,\mathrm{d}\mu_0;\,v\in H^1_0(M_0),\,\|v\|_{L^2(M_0)}^2=1\,\Big\}\\
	&\ge \frac{1}{C_0}\|u_\ep\|_{L^2(M_0)}^2\inf\Big\{\int_{M_0} g_0(\nabla_{g_0}v,\nabla_{g_0}v)\,\mathrm{d}\mu_0;\,v\in H^1_0(M_0),\,\|v\|_{L^2(M_0)}^2=1\,\Big\}\\
	&\ge \overline C_0>0,
\end{align*}
where in the last step we in particular used that $m_0(M_0)<0$.
Now, we fix $k\in\mathbb N$ and let $(\lambda_{\ep,k}, u_{\ep,k})$ be an eigenpair, i.e.,
\begin{equation}\label{LLL:eq1}
  -\Delta_{g_\ep,\mu_\ep}u_{\ep,k}=\lambda_{\ep,k}u_{\ep,k}\qquad\text{in }H^{-1}(M_\ep,g_\ep,\mu_\ep).
\end{equation}
By passing to a subsequence we may assume that $\lambda_{\ep,k}\to\overline\lambda$ as $\ep\to 0$ for some $\overline\lambda$. Moreover, w.l.o.g. we may assume that $u_{\ep,k}$ is normalized in the sense that $\int_{M_\ep}|u_{\ep,k}|^2\mathrm{d}\mu_\ep=1$. Testing \eqref{LLL:eq1} with $u_{\ep,k}$ then shows that $\|u_{\ep,k}\|_{H^1(M_\ep)}$ is bounded by a constant independent of $\ep$. We conclude that $\overline u_{\ep,k}:=u_{\ep,k}\circ h_\ep$ is bounded in $H^1(M_0,g_0,\mu_0)$ and we thus may pass to a further subsequence with $\overline u_{\ep,k}\wto \overline u$ weakly in $H^1(M_0,g_0,\mu_0)$ and strongly in $L^2(M_0,g_0,\mu_0)$, thanks to the compact embedding of $H^1(M_0,g_0,\mu_0)$ in $L^2(M_0,g_0,\mu_0)$ in Assumption~\ref{ass}. Note that this implies also that $u_{\ep,k}\to \overline u$ strongly in $L^2$ in the sense of \eqref{strong-conv}. We conclude that the right-hand side of \eqref{LLL:eq1} is strongly convergent to $\overline \lambda\overline u$. Thus, by appealing to Lemma~\ref{prop33108} (b) we conclude that 
\begin{equation*}
  -\Delta_{\hat g_0,\hat\mu_0}\overline u=\overline \lambda\overline u \qquad\text{in }H^{-1}(M_0,\hat g_0,\hat \mu_0).
\end{equation*}
Since $\|\overline u\|_{L^2(M_0,\hat g_0,\hat\mu_0)}=1$ by construction, we conclude that $(\overline\lambda,\overline u)$ is an eigenpair of the Laplace-Beltrami operator on $(M_0,\hat g_0,\hat \mu_0)$.
\end{proof}

\appendix
\section{Proofs of auxiliary results}
\subsection{Proof of Lemma \ref{ex:appendix}}
        We refer to \cite{neukamm-stelzig} for a similar result in a nonlinear, variational setting. 
        
	{\it Step 1.} Continuity of $\nabla\phi_i$ in the first argument.\\
	Consider a sequence $(x_j)$ in $\mathbb{R}^n$ converging to some $x_0\in\mathbb{R}^n$. For simplicity we set
	\begin{equation*}
		\phi_i^j:=\phi_i(x_j,\cdot)
		\qquad\text{and}\qquad
		A^j:=A(x_j,\cdot)
	\end{equation*}
	as well as
	\begin{equation*}
		\phi_i^0:=\phi_i(x_0,\cdot)
		\qquad\text{and}\qquad
		A^0:=A(x_0,\cdot).
	\end{equation*}
	First we note that the continuity of $A$ in the first argument gives $A^j\to A^0$ a.e.~on $Y$ and by uniform ellipticity we have $|A^j|\le\Lambda$ a.e.~on $Y$. Thus we can conclude
	\begin{equation}\label{ex.app.eq}
		\int_Y|A^j-A^0|^p\to0
	\end{equation}
	for $1<p<\infty$.
	
	Now we claim the convergence of $\nabla\phi_i^j$. By \eqref{perhom.b} we have
	\begin{equation*}
		-\nabla\cdot A^j(\nabla\phi_i^j-\nabla\phi_i^0)=\nabla\cdot\big((A^j-A^0)(\nabla\phi_i^0+e_i)\big).
	\end{equation*}
	The uniform ellipticity of $A^j$ allows to estimate
	\begin{equation*}
		\int_Y|\nabla\phi_i^j-\nabla\phi_i^0|^2\le\tfrac{1}{\lambda}\int_Y\bigl|(A^j-A^0)(\nabla\phi_i^0+e_i)\bigr|^2.
	\end{equation*}
	By Meyer's estimate there is $2<q<\infty$ and $C>0$ such that $\int_Y|\nabla\phi_i^0|^q\le C\int_Y|A^0e_i|^q$ and thus, for $p=\tfrac{q}{q-2}$ we have
	\begin{equation*}
		\|\nabla\phi_i^j-\nabla\phi_i^0\|_{L^2(Y)}
		\le\tfrac{1}{\sqrt{\lambda}}\|A^j-A^0\|_{L^p(Y)}\bigl(\|\nabla\phi_i^0\|_{L^q(Y)}+1\bigr)
	\end{equation*}
	and \eqref{ex.app.eq} implies $\|\nabla\phi_i^j-\nabla\phi_i^0\|_{L^2(Y)}\to0$.
	
	{\it Step 2.} $H$-convergence to $A_{\text{hom}}$.\\
  Fix $r\in\R$. By Theorem \ref{T1} there exists a subsequence (not relabeled) s.t.~$(A_\eps)$ $H$-converges to some uniformly elliptic coefficient field $A_0$ on $\R^n$. Let $B\subset\R^n$ denote an arbitrary ball and let $u_\eps\in H^1(B)$ denote the unique weak solution to 
  \begin{equation*}
    \left\{\begin{aligned}
      -\nabla\cdot A_\eps\nabla u_\eps&=0&&\text{ in }B,\\
      u_\eps&=x_i&&\text{on }\partial B.
    \end{aligned}\right.
  \end{equation*}
  Then $A_\eps\Hto A_0$ implies that $u_\eps\wto u_0$ weakly in $H^1(B)$, where $u_0$ is the unique weak solution to
  \begin{equation*}
    \left\{\begin{aligned}
        -\nabla\cdot A_0\nabla u_0&=0&&\text{ in }B,\\
        u_0&=x_i&&\text{on }\partial B.
      \end{aligned}\right.
  \end{equation*}
  For $k\in\mathbb N$ let $\eta_k\in C^\infty_c(B)$ be a cut-off function with $\eta_k=1$ in $B_k:=\{x\in B\,:\,\mbox{dist}(x,\partial B)>\frac{1}{k}\}$ and consider
  \begin{equation*}
    v_{\eps,k}:=x_i+\eps\phi_i(x,\tfrac{x+r}{\eps})\eta_k(x).
  \end{equation*}
  Then $(v_{\eps,k})$ converges as $\eps\to0$ to $v_0(x):=x_i$ weakly in $H^1(B)$ and strongly in $L^2(B)$, and a direct computation shows that
  \begin{equation*}
    \nabla v_{\eps,k}(x)=(e_i+\nabla\phi_i(x,\tfrac{x+r}{\eps}))+(\eta_k-1)\nabla\phi_i(x,\tfrac{x+r}{\eps})+\eps\phi_i(x,\tfrac{x+r}{\eps})\nabla \eta_k(x).
  \end{equation*}
  and thus for $w_{\eps,k}:=u_\eps-v_{\eps,k}\in H^1_0(B)$ we have (by appealing to the equation for $u_\eps$ and for $\phi_i$)
  \begin{align*}
    \int_B A_\eps\nabla w_{\eps,k}\cdot\nabla w_{\eps,k}&=-\int_B A_\eps\nabla v_{\eps,k}\cdot\nabla w_{\eps,k}\\
    &=-\int_B A(x,\tfrac{x+r}{\eps})(e_i+\nabla\phi_i(x,\tfrac{x+r}{\eps}))\cdot\nabla w_{\eps,k}\,\mathrm{d}x\\
    &\phantom{{}=}-\int_BA_{\eps}\big((\eta_k-1)\nabla\phi_i(x,\tfrac{x+r}{\eps})+\eps\phi_i(x,\tfrac{x+r}{\eps})\nabla \eta_k(x)\big)\cdot\nabla w_{\eps,k}\,\mathrm{d}x\\
    &\leq C(\Lambda)\int_{S_k}\big(|\nabla\phi_i(x,\tfrac{x+r}{\eps})|+\eps|\phi_i(x,\tfrac{x+r}{\eps})|\|\nabla\eta_k\|_{L^{\infty}(S_k)}\big)|\nabla w_{\eps,k}|\,\mathrm{d}x
  \end{align*}
  for some constant $C(\Lambda)>0$, where $S_k:=B\setminus B_k$.
  The left-hand side is bounded from below by $\lambda\int_B|\nabla w_{\eps,k}|^2$, and thus (by appealing to the Cauchy-Schwarz inequality), we deduce that
  \begin{equation*}
    \int_B|\nabla w_{\eps,k}|^2\leq C(\lambda,\Lambda)\int_{S_k}|\nabla\phi_i(x,\tfrac{x+r}{\eps})|^2+\big(\eps|\phi_i(x,\tfrac{x+r}{\eps})|\|\nabla\eta_k\|_{L^{\infty}(S_k)}\big)^2\,\mathrm{d}x.
  \end{equation*}
  Since $(|\nabla\phi_i(\cdot,\tfrac{\cdot+r}{\eps})|^2)$ is equi-integrable and $|S_k|\to 0$ for $k\to\infty$, we conclude that
  \begin{equation*}
    \limsup\limits_{k\to\infty}\limsup\limits_{\eps\to 0}\int_B|\nabla w_{\eps,k}|^2=0,
  \end{equation*}
  and thus there exists a diagonal sequence $(k_\eps)$ (with $k_\eps\to \infty$ as $\eps\to 0$) such that $w_\eps:=w_{k_\eps,\eps}$ satisfies $\nabla w_{\eps}\to 0$ strongly in $L^2(B)$. Hence, with  $v_\eps:=v_{\eps,k_\eps}$, we conclude that $\nabla u_\eps-\nabla v_\eps\to 0$ in $L^2(B)$.  On the other hand, since $v_\eps\to v_0$ strongly in $L^2(B)$, we conclude that $\nabla u_0=\nabla v_0=e_i$. Moreover, the $H$-convergence of $(A_\eps)$ to $A_0$ implies $A_\eps\nabla u_\eps\wto A_0\nabla u_0=A_0e_i$ weakly in $L^2(B)$, and thus (using $\nabla u_\eps-\nabla v_\eps\to 0$) we have $A_\eps\nabla v_\eps\wto A_0e_i$ weakly in $L^2(B)$.
	
	On the other hand for any $\varphi\in C^\infty_c(B)$ and $\eps>0$ small enough, we have $\varphi(x)\nabla v_{\eps}(x)=\varphi(x)(e_i+\nabla\phi_i(x,\frac{x+r}{\eps}))$, and thus by periodicity
  \begin{align*}
    \int \varphi A_{\eps}\nabla v_\eps
		&=\int \varphi(x) A(x,\tfrac{x+r}{\eps})(e_i+\nabla\phi_i(x,\tfrac{x+r}{\eps}))\,\mathrm{d}x\\
		&=\int \varphi(x) A(x,\tfrac{x}{\eps}+r_\eps)(e_i+\nabla\phi_i(x,\tfrac{x}{\eps}+r_\eps))\,\mathrm{d}x,
	\end{align*}
	where $r_\eps\in Y$ is defined by the identity $\frac{r}{\eps}=k+r_\eps$ for some $k\in \mathbb Z^d$. We write that expression in the following way:
	\begin{align*}
		&\int \varphi(x) A(x,\tfrac{x}{\eps}+r_\eps)(e_i+\nabla\phi_i(x,\tfrac{x}{\eps}+r_\eps))\,\mathrm{d}x\\
    &\qquad=\int \varphi(x-r_\eps) A(x-r_{\varepsilon},\tfrac{x}{\eps})(e_i+\nabla\phi_i(x-r_{\varepsilon},\tfrac{x}{\eps}))\,\mathrm{d}x\\
		&\qquad=\sum_{z\in\mathbb{Z}^n}\varepsilon^n\int_Y\varphi(\varepsilon z+\varepsilon y-r_\eps) A(\varepsilon z+\varepsilon y-r_{\varepsilon},y)(e_i+\nabla\phi_i(\varepsilon z+\varepsilon y-r_{\varepsilon},y))\,\mathrm{d}y.
  \end{align*}
	Since $(r_\eps)$ is a bounded sequence in $Y\subset\R^n$ we may pass to a subsequence (not relabeled) such that $r_\eps\to r_0$ in $Y$ for some $r_0\in\overline Y$. This implies that $\varphi(\cdot+\varepsilon y-r_\eps)\to \varphi(\cdot-r_0)$ strongly in $L^2(U)$ for any $U\subset \R^n$ open and bounded and every $y\in Y$. On the other hand by Step 1 we we have $A^j\nabla\phi_i^j\to A^0\phi_i^0$ in $L^1(Y)$ and thus we get
	\begin{align*}
		&\sum_{z\in\mathbb{Z}^n}\varepsilon^n\int_Y\varphi(\varepsilon z+\varepsilon y-r_\eps) A(\varepsilon z+\varepsilon y-r_{\varepsilon},y)(e_i+\nabla\phi_i(\varepsilon z+\varepsilon y-r_{\varepsilon},y))\,\mathrm{d}y\\
		&\qquad\to\int_{\mathbb{R}^n}\varphi(x-r_0)\int_YA(x-r_0,y)(e_i+\nabla\phi_i(x-r_0,y))\,\mathrm{d}y\,\mathrm{d}x\\
		&\qquad=\int_{\mathbb{R}^n}\varphi(x-r_0)A_{\text{hom}}(x-r_0)e_i\,\mathrm{d}x\\
		&\qquad=\int_{\mathbb{R}^n}\varphi(x)A_{\text{hom}}(x)e_i\,\mathrm{d}x,
	\end{align*}
	and we conclude that $\int \varphi(A_0-A_{\hom})e_i=0$ for all $\varphi\in C^\infty_c(B)$, which gives $A_0=A_{\hom}$ a.e.~in $B$. Since $B$ is an arbitrary ball, we conclude that $A_0=A_{\hom}$ a.e.~in $\R^n$. By uniqueness, we conclude that $(A_\eps)$ $H$-convergence to $A_{\hom}$ for the entire sequence.

\subsection{Proof of Lemma~\ref{L:HtoMo}}
We first recall the definition of Mosco-convergence:
\begin{definition}[Mosco-convergence] 
We say that $(\cEe)$ Mosco-converges to $\cE_0$ as $\ep\to0$
if the following two conditions are satisfied.
\begin{itemize}
\item[\rm(i)] If $\ue \wto u_0$ weakly in $L^2(M)$, then
\[
\liminf_{\ep\to0} \cEe(\ue) \ge \cE_0(u).\]
\item[\rm(ii)] For any $v \in L^2(M)$ there exists $(v_\ep) \subset L^2(M)$ such that
\[\limsup_{\ep\to0} \cEe(v_{\varepsilon}) \le \cE_0(v).\]
\end{itemize} 
\end{definition}
For the proof of Lemma~\ref{L:HtoMo} we recall that Mosco-convergence is equivalent to resolvent convergence of the operator associated with the Dirichlet form $\cEe$. More precisely, for $\ep\geq 0$ consider $\cLe\colon H^1_0(M)\to H^{-1}(M)$, $\cLe u:=-\Div_{g,\mu}(\mathbb L_\ep \nabla_gu)$ and  denote for $\lambda>0$ by $G^\lambda_\ep:=(\lambda+\cLe)^{-1}\colon L^2(M)\to H^1_0(M)$ the associated resolvent. 

\begin{lemma}[Theorem 2.4.1  \cite{Mosco1994}]\label{lemma90309}
The following two conditions are equivalent.
\begin{itemize}
\item[\rm(i)] $(\cE_\ep)$ Mosco-converges to $\cE_0$.
\item[\rm(ii)] For any $\lambda>0$, $(G^\lambda_\ep)$ converges to $G^\lambda_0$ in 
the strong operator topology of $L^2(M)$.
\end{itemize} 
\end{lemma}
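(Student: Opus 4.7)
The strategy treats the two directions separately, viewing $\cE_\ep$ as a closed non-negative quadratic form on $L^2(M)$ with associated non-negative self-adjoint operator $\cL_\ep$. I will systematically exploit the variational identity that characterizes $u_\ep := G^\lambda_\ep f$ as the unique minimizer of the strictly convex, coercive functional
\begin{equation*}
  J^\lambda_\ep(v) := \cE_\ep(v) + \lambda\|v\|_{L^2(M)}^2 - 2(f,v)_{L^2(M)},\qquad v\in L^2(M),
\end{equation*}
together with the Moreau--Yosida-type regularization of the form,
\begin{equation*}
  \cE_\ep^{(\lambda)}(u) := \lambda(u,u) - \lambda^2(G^\lambda_\ep u, u) = \int_0^\infty \tfrac{\lambda t}{\lambda+t}\,d\mu^\ep_u(t),
\end{equation*}
where $\mu^\ep_u$ denotes the scalar spectral measure of $\cL_\ep$ at $u$. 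The crucial properties are $\cE_\ep^{(\lambda)}(u)\leq \cE_\ep(u)$ and $\cE_\ep^{(\lambda)}(u)\nearrow \cE_\ep(u)$ as $\lambda\to\infty$ by monotone convergence.

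For (i) $\Rightarrow$ (ii), I fix $f\in L^2(M)$ and $\lambda>0$, and set $u_\ep := G^\lambda_\ep f$. An energy estimate gives $\|u_\ep\|\leq \|f\|/\lambda$, so along a subsequence $u_\ep\wto u_\ast$ weakly in $L^2(M)$. Picking a Mosco recovery sequence $w_\ep \to G^\lambda_0 f$ in $L^2(M)$ with $\limsup \cE_\ep(w_\ep)\leq \cE_0(G^\lambda_0 f)$ and applying the Mosco $\liminf$ inequality to $(u_\ep)$ in the minimality $J^\lambda_\ep(u_\ep)\leq J^\lambda_\ep(w_\ep)$, I will deduce $J^\lambda_0(u_\ast)\leq J^\lambda_0(G^\lambda_0 f)$, hence $u_\ast = G^\lambda_0 f$ by uniqueness of the minimizer. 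Chasing the same inequalities forces $\lim(\cE_\ep(u_\ep)+\lambda\|u_\ep\|^2) = \cE_0(u_\ast)+\lambda\|u_\ast\|^2$, from which $\|u_\ep\|\to\|u_\ast\|$ follows, so $u_\ep\to G^\lambda_0 f$ is actually strong in $L^2(M)$ and holds along the entire sequence by uniqueness of the limit.

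For (ii) $\Rightarrow$ (i), the recovery-sequence direction is straightforward: given $u_0\in D(\cE_0)$, I approximate it within the form domain by $u_0^\delta := G^\lambda_0 f^\delta\in D(\cL_0)$ for some $f^\delta\to (\lambda+\cL_0)u_0$ in $L^2(M)$; setting $u_\ep^\delta := G^\lambda_\ep f^\delta$ and combining strong resolvent convergence with the energy identity $\lambda\|u_\ep^\delta\|^2 + \cE_\ep(u_\ep^\delta) = (f^\delta, u_\ep^\delta)$ yields both $u_\ep^\delta\to u_0^\delta$ in $L^2(M)$ and $\cE_\ep(u_\ep^\delta)\to \cE_0(u_0^\delta)$, so a diagonal extraction in $\delta$ produces the required recovery sequence. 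The $\liminf$ inequality is the main obstacle. I plan to address it via functional calculus: strong resolvent convergence of non-negative self-adjoint operators upgrades (by Stone--Weierstrass) to $\varphi(\cL_\ep)\to\varphi(\cL_0)$ in the strong operator topology for every bounded continuous $\varphi$. Writing $\cE_\ep^{(\lambda)}(u) = \|g_\lambda(\cL_\ep)u\|^2$ with $g_\lambda(t):=\sqrt{\lambda t/(\lambda+t)}$ bounded and continuous, the self-adjointness of $g_\lambda(\cL_\ep)$ together with a short splitting $g_\lambda(\cL_\ep)u_\ep - g_\lambda(\cL_0)u_0 = g_\lambda(\cL_\ep)(u_\ep - u_0) + (g_\lambda(\cL_\ep) - g_\lambda(\cL_0))u_0$ will give $g_\lambda(\cL_\ep)u_\ep\wto g_\lambda(\cL_0)u_0$ weakly in $L^2(M)$ whenever $u_\ep\wto u_0$. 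Weak lower semicontinuity of the norm then yields $\cE_0^{(\lambda)}(u_0)\leq \liminf_\ep \cE_\ep^{(\lambda)}(u_\ep)\leq \liminf_\ep \cE_\ep(u_\ep)$ for every $\lambda>0$, and letting $\lambda\to\infty$ by monotone convergence finishes the argument.
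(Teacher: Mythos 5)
The paper offers no proof of this lemma: it is imported verbatim from Mosco's paper (Theorem~2.4.1 of the cited reference) and used as a black box, so there is nothing internal to compare your argument with. Your proposal is essentially the standard proof of that theorem and is correct in substance. For (i)$\Rightarrow$(ii), identifying $G^\lambda_\ep f$ as the unique minimizer of $J^\lambda_\ep$, playing a recovery sequence for $G^\lambda_0 f$ against the liminf inequality to identify the weak limit, and then upgrading to strong $L^2$ convergence via convergence of norms is exactly the classical argument, and your bookkeeping (superadditivity of liminf against the limsup of the sum) is sound. For (ii)$\Rightarrow$(i), the route through the Yosida regularization $\cE^{(\lambda)}_\ep(u)=\|g_\lambda(\cL_\ep)u\|^2$, the strong-operator convergence $g_\lambda(\cL_\ep)\to g_\lambda(\cL_0)$ inherited from strong resolvent convergence, weak lower semicontinuity of the norm, and monotone convergence as $\lambda\to\infty$ is also correct. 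One step needs repair: in the recovery-sequence construction you posit $f^\delta\to(\lambda+\cL_0)u_0$ in $L^2(M)$, but a general $u_0$ with $\cE_0(u_0)<\infty$ need not lie in $D(\cL_0)$, so $(\lambda+\cL_0)u_0$ is not an $L^2$ function; instead approximate $u_0$ in form norm by $u_0^\delta\in D(\cL_0)=G^\lambda_0(L^2(M))$ (e.g.\ $u_0^\delta:=\delta G^\delta_0u_0$, or interpret $(\lambda+\cL_0)u_0$ in the dual of the form domain) and set $f^\delta:=(\lambda+\cL_0)u_0^\delta$; your diagonal extraction in $\delta$ then applies unchanged, since $u_0^\delta\to u_0$ in $L^2(M)$ and $\cE_0(u_0^\delta)\to\cE_0(u_0)$. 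Finally, you correctly use the standard form of Mosco's condition (ii), which requires $v_\ep\to v$ strongly in $L^2(M)$; the definition as printed in the paper omits this requirement (as written it is satisfied by $v_\ep\equiv0$), and without it the equivalence would fail, so your reading is the intended one.
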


\begin{proof}[Proof of Lemma~\ref{L:HtoMo}]
We apply Lemma \ref{lemma90309}.
Let $\lambda>0$, $f_\ep\to f_0$ in $L^2(M)$, and $\ue := G^\lambda_\ep f_\ep$.
Since $(\bLe)$  $H$-converges to $\bL_0$ in $M$, Theorem~\ref{T1} implies that $u_\ep\to u_0:=G^\lambda_0f_0$ strongly in $L^2(M)$.
\end{proof}


\section*{Acknowledgments}
HH and SN acknowledge support by the DFG in the context of TU Dresden’s Institutional Strategy ``The Synergetic University''.
JM gratefully acknowledges support by Grants-in-Aid for Scientific Research 16KT0129.
A substantial part of this work was done while JM was visiting Technische Universit\"at Dresden. 
He expresses his warmest thanks to the institution.
\bibliographystyle{plain}

\end{document}